\newtheorem{thm}{Theorem}[section]
\newtheorem{cor}[thm]{Corollary}
\newtheorem{defn}[thm]{Definition}
\newtheorem{lem}[thm]{Lemma}
\newtheorem{prop}[thm]{Proposition}
\newtheorem{exa}[thm]{Example}
\newtheorem{rmk}[thm]{Remark}
\numberwithin{equation}{section}
\begin{document}

\title[Extension theorem and representation formula]{Extension theorem and representation formula in non-axially symmetric domains for slice regular functions}
\author{Xinyuan Dou}
\email[Xinyuan Dou]{douxy@mail.ustc.edu.cn}
\address{Department of Mathematics, University of Science and Technology of China, Hefei 230026, China}
\author{Guangbin Ren}
\email[Guangbin Ren]{rengb@ustc.edu.cn}
\address{Department of Mathematics, University of Science and Technology of China, Hefei 230026, China}
\author{Irene Sabadini}
\email[Irene Sabadini]{irene.sabadini@polimi.it}
\address{Dipartimento di Matematica, Politecnico di Milano, Via Bonardi, 9, 20133 Milano, Italy}
\date{\today}
\keywords{Domains of holomorphy; quaternions; slice regular functions; representation formula; slice topology}
\thanks{This work was partly supported by the NNSF of China (11771412)}

\subjclass[2010]{Primary: 30G35; Secondary: 32A30, 32D05}

\begin{abstract}
Slice analysis is a generalization of the theory of holomorphic functions of one complex variable to quaternions.
 Among the new phenomena which appear in this context, there is the fact that the convergence domain of $f(q)=\Sigma_{n\in\mathbb{N}}(q-p)^{*n} a_n$, given by a $\sigma$-ball $\Sigma(p,r)$, is not open in $\mathbb{H}$ unless $p\in\mathbb{R}$.  This motivates us to investigate, in this article, what is a natural topology for slice regular functions. It turns out that the natural  topology is the so-called slice topology, which  is different from the Euclidean topology and nicely adapts to the slice structure of quaternions. We extend the function theory of slice regular functions to any  domains in the slice topology.
   Many fundamental results in the classical slice analysis for axially symmetric domains fail  in our general setting.
   We can even construct a counterexample to show that a slice regular function in a domain cannot be extended  to an axially symmetric domain.
 In order to provide positive results we need to consider
 so-called path-slice functions
instead of  slice functions.
    Along this line, we can establish   an extension theorem   and a representation formula
  in   a slice-domain.
\end{abstract}

\maketitle

\section{Introduction}

The richness of complex analysis makes it natural to look for generalizations to quaternions. Around the early thirties various people, among which Moisil and Fueter,  considered possible definitions of analiticity over the quaternions. Since then, Fueter and his school started a systematic study, so the notion of `regular' quaternionic function is the one associated with the so-called Cauchy-Riemann-Fueter equation, see  \cite{Fueter1934001}
\begin{equation*}
\frac{\partial{f}}{\partial{x_1}}+i\frac{\partial{f}}{\partial{x_2}}+j\frac{\partial{f}}{\partial{x_3}}+k\frac{\partial{f}}{\partial x_4}=0.
\end{equation*}
This theory has been widely studied, see e.g. \cite{Sudbery1979001,Frenkel2015001,Libine2016001} but also \cite{CSSS,GHS} and the references therein. Unfortunately, the class of Fueter regular functions does not contain the identity function $f(q)=q$ or any other polynomial in $q$. However, Fueter \cite{Fueter1934001} found a powerful approach to construct functions in higher dimensions based on holomorphic function of one complex variable.

This approach was further developed by Sce \cite{sce}, Rinehart \cite{Rinehart1960001} and resulted  in  the theory of intrinsic or stem functions. Later on, Cullen \cite{Cullen1965001} defined another class of regular functions by intrinsic functions. Cullen regular functions contain quaternionic power series of the form $\Sigma_{n\in\mathbb{N}} q^n a_n$.

Following Cullen's approach another theory, called slice quaternionic analysis, was started by  Gentili and Struppa \cite{Gentili2006001,Gentili2007001} based on more geometric formulation. This local theory has been well established first on balls centered at the origin \cite{Gentili2006001,Gentili2007001} then over the axially symmetric slice domains \cite{Colombo2009001,Colombo2009003}. Most of the local theory of holomorphic functions of one complex variable can be lifted to quaternions.
It gives rise to the new notion of S-spectrum and has powerful applications in the quaternionic spectral theory  see e.g. \cite{ACK,Colombo2009003}, quaternionic Hilbert spaces \cite{Colombo2009003,Ghiloni2013001,Alpay2014001,Colombo2019001B}.
See \cite{Colombo2011001B,Gentili2013001B} and the references therein for other information.

In contrast to its full development in local theory, the global one remains to be developed. The challenging task of establishing the global theory over quaternions
can lead to some new theories such as slice Riemann surfaces,  domains of slice regularity, and slice Dolbeault  complexes.
Therefore, the first natural question to be answered is:

\textbf{ What is the natural topology in slice analysis? }

In \cite{Colombo2009001}, it has been argued that any slice regular function on a domain of $\mathbb H$ can be extended to an axially symmetric domain. But this is not true and we provide a counterexample in Example \ref{exa-sre}.
This means that axially symmetric slice domains are not
the maximal domains of definition of a slice regular function.
In other words,   axially symmetric domains  do  not play the  role of the natural maximal domains  in slice analysis.
On the other hand,
the convergence domain of the Taylor expansion of a slice regular function
$$\sum_{n\in\mathbb{N}}(q-p)^{*n}\frac{f^n(p)}{n!},$$
completely  described in terms of  the $\sigma$-ball $\Sigma(p,r)$ (see \cite{Gentili2012001}),
may not be a Euclidean domain. Hence the  Euclidean topology is not a natural topology in slice analysis.

To answer the above question, we observe that the slice book structure of quaternions plays a key role which makes it feasible to  lift  the  theory of holomorphic functions in  one  complex variable  to quaternions.
The slice book structure comes from the following decomposition of quaternions into complex planes,
  \begin{equation}\label{eq-ss}
	\mathbb{H}=\bigcup_{I\in\mathbb{S}}\mathbb{C}_I,
\end{equation}
where $\mathbb{C}_I=\mathbb R+I\mathbb R$ is the complex plane generated by the imaginary unit $I$ and $\mathbb S$ consists of all imaginary units $I$ of quaternions. As a result, the slice book structure of quaternions is a  natural structure in slice analysis.

Motivated by the slice book structure, we can answer  the main question of this article. It turns out that  the natural  topology  in slice analysis is the so-called slice topology, which  adapts  nicely to the book structure of quaternions.  We prove that the slice topology is finer than the Euclidean topology and all of the $\sigma$-balls  $\Sigma(p,r)$  are  domains  in the  slice topology.

With this  slice topology, some natural questions arise.  One can ask if the slice theory can be extended from the axially symmetric domains to any domains  in slice topology, but the answer is negative in general.
As an example, one can consider
the representation formula. This formula is the most important feature of the classical local theory of slice analysis. It states that  any  slice regular function  over an  axially symmetric slice domain is completely determined by its values on two pages, i.e. complex planes, of the book structure of $\mathbb H$.
This result cannot be immediately extended to the case of open sets in the slice topology. Instead, we have to extend the theory of stem functions to a new one involving  paths which  produce  path-slice functions.

Using the slice topology, one can also ask if any domain in the slice topology is a domain of holomorphy in some sense.
Also the answer to this question is negative, in general, in contrast to the case of holomorphic functions in one variable. This leads to the study of the characterization
of domains of holomorphy just like in  the case of holomorphic functions of several variables.
We provide conditions for a domain to be such a domain of holomorphy.

The structure of the paper is the following.
In Section \ref{sc-mr}, we introduce the slice topology on quaternions for slice regular functions and  we describe our main results and ideas. In Section \ref{sc-st}, we give some basic properties and examples for the slice topology. In Section \ref{sc-ip}, we prove an identity principle for slice regular functions on domains in the slice topology. In Section \ref{sc-sf}, we generalize the notion of slice function to any subset of $\mathbb{H}$ and give several equivalent definitions of slice functions. In Section \ref{sc-ef}, we prove a generalized extension formula. In Section  \ref{sc-ps}, we define a class of functions, called path-slice functions. These functions play a similar role on slice-domains as the slice functions do on axially symmetric slice domains. We also give several equivalent definitions of path-slice functions and prove our main theorem, i.e. the Representation Formula \ref{th-rf}. In Section \ref{sc-ex}, we give an example to show that the classical general representation formula \cite[Theorem 3.2]{Colombo2009001} does not work on a non-axially symmetric s-domain, using the new Representation Formula \ref{th-rf}. Section \ref{sc-dsr} is devoted to  domains of holomorphy  for slice regular functions defined on slice-open sets among which there are axially symmetric st-domains and $\sigma$-balls.

We will continue our further study on the global theory of slice analysis in some forthcoming articles.

\section{Main results}\label{sc-mr}

In this section, we state  our main results. To this end, some notation and definitions from  \cite{Gentili2006001} are needed.
   Let
\begin{equation*}
\mathbb{S}:=\{q\in\mathbb{H}:q^2=-1\}
\end{equation*}
be the  sphere of imaginary units of $\mathbb{H}$.   For any subset $\Omega$ of $\mathbb{H}$ and $I\in\mathbb{S}$, we call $$\Omega_I:=\Omega\cap\mathbb{C}_I$$ the $I$-slice (a slice) of $\Omega$.

\begin{defn}
	Assume that  $\Omega$ is  an open set in $\mathbb{C}_I$ for some  $I\in\mathbb{S}$.  A function $f:\Omega\rightarrow\mathbb{H}$ is said to be left $\mathbb{C}_I$-holomorphic (or, simply, holomorphic), if $f$ has continuous partial derivatives and satisfies
	\begin{equation}\label{eq-bp}
		\bar\partial_I f(x+yI):=\frac{1}{2}\left(\frac{\partial}{\partial x}+I \frac{\partial}{\partial y}\right) f(x+yI)=0
	\end{equation}
for any $x,y\in\mathbb{R}$ with $x+yI\in\Omega.$
\end{defn}
The definition originally given in \cite{Gentili2006001} is:
\begin{defn}\label{df-sd}
	Let $\Omega$ be a domain in $\mathbb{H}$. A function $f:\Omega\rightarrow\mathbb{H}$ is said to be (left) slice regular if $f_I:=f|_{\Omega_I}$ is left $\mathbb{C}_I$-holomorphic for any $I\in\mathbb{S}$.
\end{defn}

\cite[Theorem 8]{Gentili2012001} shows that the convergence domain of the series $$\sum_{n\in\mathbb{N}}(q-p)^{*n}a_n$$
is the $\sigma$-ball
\begin{equation*}
\Sigma(p,r):=\{q\in\mathbb{H}:\sigma(p,q)<r\}.
\end{equation*}
 with the $\sigma$-distance defined by
\begin{equation*}
\sigma(q,p):=\left\{\begin{split}
&|q-p|,&&\qquad\exists\ I\in\mathbb{S},\ s.t.\ p,q\in\mathbb{C}_I,
\\&\sqrt{(Re(q-p))^2+|Im(q)|^2+|Im(p)|^2},&&\qquad {\rm otherwise},
\end{split}\right.
\end{equation*}
for any $p,q\in\mathbb{H}$. A $\sigma$-ball is not a Euclidean domain when $p\in\mathbb{H}\backslash\mathbb{R}$. This illustrates the need to define `slice regular' functions on more sets, such as the above $\sigma$-balls. Note that the `holomorphic' condition of $f$ in Definition \ref{df-sd} is limited to each slice $\mathbb{C}_I$, $I\in\mathbb{S}$. Thus in order to define `slice regularity', we just need to guarantee that $\Omega_I$ is open in $\mathbb{C}_I$ for each $I\in\mathbb{S}$.

\begin{defn}\label{df-so}
	A subset $\Omega$ of $\mathbb{H}$ is called slice-open, if $\Omega_I$ is open in $\mathbb{C}_I$ for any $I\in\mathbb{S}$.
\end{defn}

It is clear that the $\sigma$-ball $\Sigma(p,r)$ is slice-open. Now we extend Definition \ref{df-sd} to slice-open sets.

\begin{defn}
	Let $\Omega$ be a slice-open set in $\mathbb{H}$. A function $f:\Omega\rightarrow\mathbb{H}$ is called (left) slice regular, if $f_I$ is left holomorphic for any $I\in\mathbb{S}$.
\end{defn}

We note that, so far, in the literature, numerous results in
 slice quaternionic analysis (according to the definition in \cite{Gentili2006001})  have been developed systematically over axially symmetric slice domains and this is basically enough for various purposes. Our goal is to generalize it to any slice-open set. Some properties can be proved  as in the classical case, e.g. the following Splitting Lemma. Thus we state it without proof.

\begin{lem}\label{lm-sl}
	(Splitting Lemma)
	Let $f$ be a function on a slice-open set $\Omega$. Then $f$ is slice regular, if and only if for all $I,J\in\mathbb{S}$ with $I\bot J$, there are two complex-valued holomorphic functions $F,G: \Omega_I\rightarrow\mathbb{C}_I$ such that $f_I=F+GJ$.
\end{lem}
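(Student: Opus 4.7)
The plan is to reduce the statement to a slice-by-slice computation, because the slice-open hypothesis on $\Omega$ is exactly what is needed for $\Omega_I$ to be open in $\mathbb{C}_I$ and hence for the classical argument to go through without any axial symmetry.

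Fix $I,J\in\mathbb{S}$ with $I\perp J$. Then $\{1,I,J,IJ\}$ is a real basis of $\mathbb{H}$, so any function $f_I:\Omega_I\to\mathbb{H}$ can be uniquely written as $f_I=F+GJ$ with $F,G:\Omega_I\to\mathbb{C}_I$; here $F=f_0+f_1I$, $G=f_2+f_3I$ where $f=f_0+f_1I+f_2J+f_3IJ$ is the real component decomposition. This part is purely algebraic and uses only $I\perp J$.

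Next I would compute $\bar\partial_I f_I$ in terms of $F$ and $G$. The key observation is that $G$ takes values in $\mathbb{C}_I$, so $I$ commutes with $G$, $\partial_x G$ and $\partial_y G$; hence
\begin{equation*}
\bar\partial_I(GJ)=\tfrac12\bigl(\partial_x G+I\,\partial_y G\bigr)J=(\bar\partial_I G)\,J,
\end{equation*}
and trivially $\bar\partial_I F=\tfrac12(\partial_xF+I\partial_yF)$ takes values in $\mathbb{C}_I$. Since the decomposition $\mathbb{H}=\mathbb{C}_I\oplus\mathbb{C}_IJ$ is a direct sum as real (indeed $\mathbb{C}_I$-left) vector spaces, the equation
\begin{equation*}
\bar\partial_I f_I=(\bar\partial_I F)+(\bar\partial_I G)J=0
\end{equation*}
is equivalent to the simultaneous vanishing $\bar\partial_I F=0$ and $\bar\partial_I G=0$, i.e.\ to the $\mathbb{C}_I$-holomorphy of both $F$ and $G$.

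From this, both directions follow immediately. If $f$ is slice regular, then $\bar\partial_I f_I=0$ on the open set $\Omega_I\subset\mathbb{C}_I$ (by Definition \ref{df-so} and slice regularity), so the unique components $F,G$ are $\mathbb{C}_I$-valued holomorphic. Conversely, if for every $I$ and some $J\perp I$ such a decomposition exists with $F,G$ holomorphic, then $\bar\partial_I f_I=0$ for every $I\in\mathbb{S}$, so $f$ is slice regular. There is essentially no obstacle here: the only point to check carefully is that $\mathbb{C}_I$-linearity of $\bar\partial_I$ combined with $G\in\mathbb{C}_I$ lets $J$ pass through the operator on the right, which is what makes the splitting into two independent Cauchy--Riemann systems possible.
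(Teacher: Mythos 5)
Your proof is correct and is precisely the classical splitting argument (decompose $\mathbb{H}=\mathbb{C}_I\oplus\mathbb{C}_IJ$ and observe that $\bar\partial_I(F+GJ)=\bar\partial_IF+(\bar\partial_IG)J$ vanishes iff both components do), which is exactly what the paper invokes when it states the lemma without proof, noting it "can be proved as in the classical case." Your added observation that only openness of each $\Omega_I$ in $\mathbb{C}_I$ is needed, so the argument carries over verbatim to slice-open sets, is the right justification for that omission.
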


The set of slice-open sets gives a topology on $\mathbb{H}$, in fact we have:

\begin{lem}\label{lm-sl123}
 $$\tau_s(\mathbb{H}):=\{\Omega\subset\mathbb{H}:\Omega\ \mbox{is slice-open}\}$$ is a topology of $\mathbb{H}$.
\end{lem}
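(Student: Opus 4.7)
The plan is to verify the three topology axioms directly from Definition \ref{df-so}, exploiting the fact that taking the $I$-slice commutes with arbitrary unions and intersections, and that on each plane $\mathbb{C}_I$ we already have the standard Euclidean topology at our disposal.

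First I would handle the trivial cases: $\emptyset \cap \mathbb{C}_I = \emptyset$ and $\mathbb{H} \cap \mathbb{C}_I = \mathbb{C}_I$ are both open in $\mathbb{C}_I$ for every $I \in \mathbb{S}$, so $\emptyset, \mathbb{H} \in \tau_s(\mathbb{H})$.

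Next, for an arbitrary family $\{\Omega_\alpha\}_{\alpha \in A}$ of slice-open sets, I would observe the set-theoretic identity
\begin{equation*}
\Bigl(\bigcup_{\alpha \in A} \Omega_\alpha\Bigr)_I = \bigcup_{\alpha \in A}(\Omega_\alpha)_I,
\end{equation*}
which is a union of open subsets of $\mathbb{C}_I$, hence open. For a finite intersection $\Omega_1, \dots, \Omega_n$, the same kind of identity
\begin{equation*}
\Bigl(\bigcap_{k=1}^n \Omega_k\Bigr)_I = \bigcap_{k=1}^n (\Omega_k)_I
\end{equation*}
expresses the $I$-slice as a finite intersection of open sets of $\mathbb{C}_I$, which is again open. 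This establishes both closure properties and completes the verification.

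There is no real obstacle here: the whole argument is the observation that $\Omega \mapsto \Omega \cap \mathbb{C}_I$ is a Boolean homomorphism for each fixed $I$, so it pulls the topology structure on $\bigsqcup_I \mathbb{C}_I$ back to a topology on $\mathbb{H}$. The only thing worth emphasising in the write-up is that slice-openness is defined sliceblind-wise and that the two displayed identities hold uniformly in $I \in \mathbb{S}$, so that no coherence between the slices needs to be checked.
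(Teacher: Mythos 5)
Your proposal is correct and is essentially the ``direct proof'' that the paper itself invokes: the paper's one-line argument notes that the claim follows immediately either by direct verification or by recognising $\tau_s$ as the final topology with respect to the inclusions $\{i_I:\mathbb{C}_I\rightarrow\mathbb{H}\}_{I\in\mathbb{S}}$, and your observation that $\Omega\mapsto\Omega\cap\mathbb{C}_I$ is the preimage under $i_I$ (hence commutes with unions and intersections) is exactly the content of that remark.
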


\begin{proof}
 { The claim can be immediately verified with a direct proof or by observing that the slice topology is the final topology with respect to the inclusions $\{i_I:\mathbb{C}_I\rightarrow\mathbb{H}\}_{I\in\mathbb{S}}$.}
\end{proof}
	
\begin{defn}\label{df-sth}
	We call $\tau_s(\mathbb{H})$ the slice topology. Open sets, connected sets and paths in the slice topology are called  slice-open sets, slice-connected and slice-paths.
\end{defn}

\begin{rmk} In particular, a similar terminology will be used for all the other notions in the slice topology, with one remarkable exception. We will not use the terminology slice-domain to denote a domain in the slice topology, since this notion is already used in the literature to denote something different (see Definition \ref{df-sdr} below). We will use instead the term {\em slice topology-domain}, in short, {\em st-domain}.
\end{rmk}

\begin{defn}\label{df-sdr}
A set  $\Omega$ in $\mathbb{H}$ is called classical slice domain, in short s-domain,   if  $\Omega$ is a domain in the Euclidean topology,
 $$\Omega_\mathbb{R}:=\Omega\cap \mathbb R\neq\varnothing, $$ and $\Omega_I$  is a domain in $\mathbb{C}_I$ for any $I\in\mathbb{S}$.
\end{defn}

It is evident that an $s$-domain must be a domain in the slice topology, i.e. an st-domain, but the converse statement is not true  (see Example \ref{ex-sd}).

The classical slice quaternionic analysis is established on axially symmetric s-domains.
The slice quaternionic analysis on st-domains shows differences with respect to the classical one, since it relies on the slice-connectedness. For example, the proof of the following generalized Identity Principle in Section \ref{sc-ip}, involves some properties of st-domains induced by slice-connectedness.

\begin{thm}\label{th-ip}
	(Identity Principle)
	Let  $f$ and $g$ be two slice regular functions on an st-domain $\Omega$ in $\mathbb{H}$.  If  $f$ and $g$ coincide on a subset of $\Omega_I$ with an accumulation point in $\Omega_I$ for some $I\in\mathbb{S}$, then $f=g$ on $\Omega$.
\end{thm}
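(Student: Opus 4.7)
The plan is to show that $h := f - g$ vanishes identically on $\Omega$ by constructing a subset $W \subset \Omega$ which is simultaneously slice-open and slice-closed and nonempty; slice-connectedness of the st-domain $\Omega$ will then force $W = \Omega$.

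For each $J \in \mathbb{S}$, let $W_J$ be the union of those connected components of $\Omega_J$ on which $h_J$ vanishes identically. By the Splitting Lemma \ref{lm-sl} combined with the classical identity principle for complex holomorphic functions, each $W_J$ is both open and closed in $\Omega_J$. The hypothesis ensures that the connected component of $\Omega_I$ containing the accumulation point of $\{h_I = 0\}$ lies in $W_I$, so $W := \bigcup_{J \in \mathbb{S}} W_J$ is nonempty. The key task is then to check that $W$ is slice-open, which amounts to $W \cap \mathbb{C}_J = W_J$ for every $J \in \mathbb{S}$. Since $\mathbb{C}_K \cap \mathbb{C}_J = \mathbb{R}$ when $K \neq \pm J$, this reduces to the real-point propagation statement: if $r \in \mathbb{R} \cap W_K$, then $r \in W_J$.

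This real-point propagation is the main obstacle, as the classical representation formula (usually deployed for this purpose) is unavailable outside axially symmetric domains. To handle it I would first observe that at any $r \in \Omega \cap \mathbb{R}$, the iterated complex derivatives $h_J^{(n)}(r)$ are all equal to the iterated real derivatives of $h|_{\mathbb{R}}$ at $r$, and in particular are independent of $J$. This follows by induction from the identity $h_J'(x) = \partial_x h_J(x)$ on the real line together with $h_J|_{\mathbb{R}} = h|_{\mathbb{R}}$. Now if $r \in W_K$, then $h_K$ vanishes on a $\mathbb{C}_K$-disc around $r$, so all these derivatives vanish. Writing $h_J = F + GL$ on $\Omega_J$ with $L \perp J$ and $F,G:\Omega_J \to \mathbb{C}_J$ complex holomorphic (via the Splitting Lemma), the identity $h_J^{(n)}(r) = F^{(n)}(r) + G^{(n)}(r) L$ together with the direct sum $\mathbb{H} = \mathbb{C}_J \oplus \mathbb{C}_J L$ forces $F^{(n)}(r) = G^{(n)}(r) = 0$ for all $n$. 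The convergent Taylor series of $F$ and $G$ at $r$ then vanish on a common $\mathbb{C}_J$-disc, so $h_J \equiv 0$ there, placing $r$ in $W_J$.

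Finally, $\Omega \setminus W$ is slice-open as well, since $(\Omega \setminus W) \cap \mathbb{C}_J = \Omega_J \setminus W_J$ is the union of those connected components of $\Omega_J$ not contained in $W_J$, hence open in $\mathbb{C}_J$. Thus $W$ is nonempty and both slice-open and slice-closed in the slice-connected $\Omega$, whence $W = \Omega$, yielding $h_J \equiv 0$ on $\Omega_J$ for every $J$ and therefore $f = g$ on $\Omega$.
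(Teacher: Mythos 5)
Your proof is correct, but the mechanism you use to cross the real axis is genuinely different from the paper's. The paper first proves an auxiliary lemma (Lemma \ref{lm-ip}) for \emph{real-connected} st-domains: there, Corollary \ref{pr-rcsd394} guarantees that each nonempty slice $\Omega_J$ is a domain containing the nonempty open interval $\Omega_{\mathbb{R}}$, so vanishing on $\Omega_I\supset\Omega_{\mathbb{R}}$ propagates to every other slice by the one-variable identity principle; the general case is then an open--closed argument on the set $A$ of points admitting a slice-neighborhood on which $f=g$, with Proposition \ref{pr-sor} supplying real-connected st-domain neighborhoods at every point. You instead run the open--closed argument directly on a set $W$ assembled from connected components of the individual slices, and you cross the real axis \emph{pointwise}: all iterated Cullen derivatives of $h_J$ at a real point coincide with the real derivatives of $h|_{\mathbb{R}}$ and are therefore slice-independent, so vanishing of the full jet transfers from $\mathbb{C}_K$ to $\mathbb{C}_J$, and the Taylor expansions of the splitting components $F,G$ finish the job. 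Your route is self-contained, bypassing the structure theory of real-connected st-domains entirely, whereas the paper's lemma and Proposition \ref{pr-sor} are reusable tools elsewhere in the article. One small simplification available to you: once $h_K$ vanishes on a $\mathbb{C}_K$-disc about $r$, it vanishes on a real interval about $r$, and that interval already lies in $\Omega_J$ and accumulates at $r$, so the Splitting Lemma \ref{lm-sl} plus the classical identity principle give $h_J\equiv 0$ on the relevant component without computing any derivatives.
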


Another fundamental result in the classical slice analysis is the general representation formula \cite[Theorem 3.2]{Colombo2009001}.
Unfortunately, this formula fails, in general, on non axially symmetric domains, see Section \ref{sc-ex}.

To get the validity of the formula, we have to introduce the notion of path-slice functions, see Definition \ref{df-ps}.

 We consider the transform
 \begin{align*}
	\mathcal{P}_I:\mathbb{C}&\rightarrow\mathbb{C}_I
\\
 x+yi&\mapsto x+yI
	\end{align*}
for any $x,y\in\mathbb{R}\ \mbox{and}\ I\in\mathbb{S}$.  For any path $\gamma$ in $\mathbb{C}$, we define its corresponding path in $\mathbb C_I$ as
 $$\gamma^I:=\mathcal{P}_I\circ\gamma$$ for any $I\in\mathbb S$.

\begin{thm}(Representation Formula)\label{th-rf}
	Assume that  $\Omega$ is a slice-open set  in $\mathbb{H}$ and
suppose  $\gamma$ is a path in $\mathbb{C}$ satisfying the conditions
$$ \gamma(0)\in\mathbb{R}, \qquad \gamma^I,\gamma^J,\gamma^K\subset\Omega$$
for some $ I,J,K\in\mathbb{S}$ with  $J\neq K$. If $f$ is  a slice regular function  on   $\Omega$, then
	\begin{equation}\label{eq-rf}
	f\circ\gamma^I=(I-K)(J-K)^{-1}f\circ\gamma^J+(I-J)(K-J)^{-1}f\circ\gamma^K.
	\end{equation}

\end{thm}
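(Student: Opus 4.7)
The plan is a bootstrap along the path. Set
\[
E := \{t \in [0,1] : \text{(\ref{eq-rf}) holds at } t\} \quad \text{and} \quad t^* := \sup\{t : [0, t] \subset E\}.
\]
Continuity of $f$ on $\Omega$ and of each $\gamma^L$ makes $E$ closed in $[0,1]$, so $[0, t^*] \subset E$; the goal is $t^* = 1$. The base case $0 \in E$ is immediate: $\gamma^I(0) = \gamma^J(0) = \gamma^K(0) = \gamma(0) \in \mathbb{R}$, and both sides of (\ref{eq-rf}) reduce to $f(\gamma(0))$ once one verifies the quaternionic identity $(I-K)(J-K)^{-1} + (I-J)(K-J)^{-1} = 1$. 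The rest of the argument extends $E$ strictly past $t^*$ whenever $t^* < 1$, yielding the needed contradiction.

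If $\gamma(t^*) \in \mathbb{R}$, I expand $f$ in its Taylor series $\sum_n (q - \gamma(t^*))^{*n} a_n$ at the real point $\gamma(t^*)$. Cauchy estimates on any single slice produce a positive radius of convergence $R$, so the series converges to $f$ on $\Sigma(\gamma(t^*), R) = B(\gamma(t^*), R) \cap \Omega$. For $t$ near $t^*$ all three $\gamma^L(t)$ lie in $B(\gamma(t^*), R)$; using that $\mathcal{P}_L$ is a ring homomorphism and writing $(\gamma(t) - \gamma(t^*))^n = u_n(t) + i v_n(t)$ gives
\[
f(\gamma^L(t)) = \sum_n u_n(t) a_n + L \sum_n v_n(t) a_n = A(t) + L B(t),
\]
with $A, B$ independent of $L$. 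Substituting into the right-hand side of (\ref{eq-rf}) and using the companion identity $(I-K)(J-K)^{-1} J + (I-J)(K-J)^{-1} K = I$ (whose underlying content is the conjugation relation $(J-K)^{-1} J (J-K) = -K$) verifies (\ref{eq-rf}) throughout a neighborhood of $t^*$, contradicting the definition of $t^*$.

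If $\gamma(t^*) \notin \mathbb{R}$, I choose a small Euclidean disk $D \subset \mathbb{C}$ around $\gamma(t^*)$ with $\mathcal{P}_L(D) \subset \Omega_L$ for $L = I, J, K$, set $\tilde D := \bigcup_{L \in \mathbb{S}} \mathcal{P}_L(D)$, and define $g : \tilde D \to \mathbb{H}$ by
\[
g(x + yL) := (L-K)(J-K)^{-1} f(x+yJ) + (L-J)(K-J)^{-1} f(x+yK)
\]
for $x + yi \in D$ and $L \in \mathbb{S}$. The same conjugation identity yields $L(L-K)(J-K)^{-1} = (L-K)(J-K)^{-1} J$ and its $J \leftrightarrow K$ twin, from which one checks that $g_L$ satisfies the $\mathbb{C}_L$-Cauchy--Riemann equation for every $L$; so $g$ is slice regular on $\tilde D$ and tautologically $g = f$ on $\mathcal{P}_J(D) \cup \mathcal{P}_K(D)$. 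What I need is $g = f$ on $\mathcal{P}_I(D)$, since this together with the definition of $g$ yields (\ref{eq-rf}) throughout a neighborhood of $t^*$. Because $[0, t^*] \subset E$, I have $f(\gamma^I(t)) = g(\gamma^I(t))$ for $t$ slightly below $t^*$; provided $\gamma$ is not constant on an entire left-neighborhood of $t^*$, a sequence $t_n < t^*$, $t_n \to t^*$ with $\gamma(t_n) \ne \gamma(t^*)$ produces points $\gamma^I(t_n) \in \mathcal{P}_I(D)$ accumulating at $\gamma^I(t^*)$, and the classical identity principle for holomorphic functions on the disk $\mathcal{P}_I(D)$ forces $f_I = g_I$ there.

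The main obstacle is the degenerate case in which $\gamma$ is constantly equal to $\gamma(t^*)$ on a left-neighborhood of $t^*$; I handle it by pushing the plateau back to its maximal extent $[t^* - \delta^*, t^*]$. Since $\gamma(0) \in \mathbb{R} \ne \gamma(t^*)$, one has $\delta^* < t^*$, so non-constant behavior must occur in every sub-interval just before the plateau, producing the accumulating sequence required by the identity principle. What ultimately makes the argument go through is precisely this bootstrap structure: the single-point agreement $f(\gamma^I(t^*)) = g(\gamma^I(t^*))$ alone would be far too weak to apply the identity principle, but the entire closed interval $[0, t^*] \subset E$ feeds back enough agreement of $f_I$ and $g_I$ along $\gamma^I$, and the real starting point $\gamma(0)$ rules out the only global obstruction.
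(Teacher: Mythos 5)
Your bootstrap along the parameter interval is correct, but it is a genuinely different route from the paper's. The paper derives Theorem \ref{th-rf} as an immediate corollary of two pieces of machinery: Proposition \ref{pr-ps} (the equivalent characterizations of path-slice functions) and Theorem \ref{th-ps} (every slice regular function on a slice-open set is path-slice). The proof of Theorem \ref{th-ps} is ``global along the path'': it encloses $\gamma^J$ and $\gamma^K$ in planar domains $U_J\subset\Omega_J$, $U_K\subset\Omega_K$, applies the Extension Theorem \ref{pr-ef} to the two-slice tube $\mathbb U^{+\Delta}_s$, and then uses the Identity Principle \ref{th-ip} on the slice-connected component $W$ of $\mathbb U^{+\Delta}_s\cap\Omega$ containing $\gamma(0)$ to conclude that $f$ is a genuine slice function on $W$ and that $\gamma^I\subset W$. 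You instead run a connectedness argument on $[0,1]$ itself, using only two local ingredients at each stage: the Taylor expansion at a real point (which works because every $\Omega_L$ contains a full disk around a real point of $\Omega$, so the three restrictions $f_I,f_J,f_K$ share the same real-analytic germ on $\mathbb R$), and, off the real axis, the same $\bar\partial_L$-computation as in the proof of \cite[Theorem 3.2]{Colombo2009001} on a small two-slice disk, closed up by the one-variable identity principle fed by the already-established agreement on $[0,t^*]$. Your treatment of the two delicate points is sound: the disk $D$ must avoid $\mathbb R$ for $g$ to be well defined on $\bigcup_L\mathcal P_L(D)$ (which is exactly why the real points need the separate Taylor argument), and the plateau case is correctly reduced to the left endpoint of the maximal constancy interval, where $\gamma(0)\in\mathbb R\neq\gamma(t^*)$ guarantees a non-trivial accumulating sequence. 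What your approach buys is self-containment: it bypasses the extension theorem over $\mathbb U^{+\Delta}_s$, the identity principle for st-domains, and the whole path-slice formalism. What it loses is precisely that formalism: the paper's proof produces the stem system $F_\gamma$ and the slice-function structure of $f$ on $W$ as byproducts, which are reused elsewhere (e.g.\ Proposition \ref{rm-scs}). One small imprecision to tighten: for $p=\gamma(t^*)\in\mathbb R$ the $\sigma$-ball $\Sigma(p,R)$ equals the Euclidean ball $B(p,R)$, not $B(p,R)\cap\Omega$, and the series is only guaranteed to represent $f$ on the connected components of $\Omega_L\cap B_L(p,R)$ through $p$; since only the three slices $I,J,K$ and points $\gamma^L(t)$ with $t$ near $t^*$ enter the argument, this does not affect the proof.
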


 \begin{rmk} Although we only assume the domain $\Omega$ in consideration is slice-open, some restrictions related to slice-connectedness are implicitly  involved
 as shown by the conditions
 $$\gamma^I,\gamma^J,\gamma^K\subset\Omega.$$
   The path $\gamma^I$ in a slice can distinguish points of $\Omega$ more finely than $x+yI$ (by the Euclidean coordinate in $\mathbb{C}_I$), see Section \ref{sc-ex}. This ensures that the representation formula holds on non-axially symmetric domains.
\end{rmk}

A function satisfying \eqref{eq-rf} is called path-slice in Section \ref{sc-ps} based on   an equivalent definition.
It turns out that any slice regular function is a   path-slice   function.
 The proof of  \eqref{eq-rf}
 shall depend on a new approach; see Proposition \ref{pr-ps} (i) and (vi).

\section{Slice topology}\label{sc-st}
  In this section, we study some properties of the slice topology $\tau_s(\mathbb H)$. The slice structure induces the intricacy of the notion of slice-connectedness near the real axis. We tackle this issue in terms of  slice-paths.

We denote by $\tau_s(\mathbb H)$ and $\tau(\mathbb H)$ the  slice topology  and the Euclidean topology of $\mathbb{H}$, respectively.
Sometimes, we simply write  $\tau_s$ and $\tau$, for short.

\begin{prop}
	$(\mathbb{H},\tau_s)$ is a Hausdorff space and $\tau\subsetneq\tau_s$.
\end{prop}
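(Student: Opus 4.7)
The plan is to establish three claims separately: the inclusion $\tau\subseteq\tau_s$, the Hausdorff property of $(\mathbb{H},\tau_s)$, and the strictness of the inclusion. The Hausdorff statement falls out immediately once we have $\tau\subseteq\tau_s$.

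First I would verify $\tau\subseteq\tau_s$. Let $U\in\tau$. For each $I\in\mathbb{S}$, the slice $\mathbb{C}_I$ carries the Euclidean subspace topology, and $U_I=U\cap\mathbb{C}_I$ is, by definition of the subspace topology, open in $\mathbb{C}_I$. Hence $U$ is slice-open in the sense of Definition \ref{df-so}, so $U\in\tau_s$. Since $(\mathbb{H},\tau)$ is Hausdorff and $\tau\subseteq\tau_s$, any pair of $\tau$-separating neighbourhoods of two distinct points is automatically $\tau_s$-separating, so $(\mathbb{H},\tau_s)$ is Hausdorff.

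For the strict inclusion I would exhibit a slice-open set that is not Euclidean open. A clean candidate is $\Omega:=\mathbb{C}_I\setminus\mathbb{R}$ for any fixed $I\in\mathbb{S}$. For $J\in\{\pm I\}$ one has $\Omega_J=\mathbb{C}_I\setminus\mathbb{R}$, which is open in $\mathbb{C}_I=\mathbb{C}_J$. For $J\in\mathbb{S}\setminus\{\pm I\}$, a short computation (if $a+bI=c+dJ$ with $a,b,c,d\in\mathbb{R}$ and $b\ne 0$, comparing norms of the imaginary parts forces $J=\pm I$) gives $\mathbb{C}_I\cap\mathbb{C}_J=\mathbb{R}$, hence $\Omega_J=\varnothing$, which is trivially open in $\mathbb{C}_J$. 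Thus $\Omega\in\tau_s$. On the other hand, every point of $\Omega$ is a Euclidean-boundary point of $\Omega$, since any Euclidean ball about $p\in\Omega$ contains quaternions outside $\mathbb{C}_I$; so $\Omega$ has empty Euclidean interior in $\mathbb{H}$ and therefore $\Omega\notin\tau$.

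No step here is really an obstacle; all the work is bookkeeping with the definitions. The only point that deserves a moment's thought is the choice of witness for $\tau\subsetneq\tau_s$: one needs a set supported on a single complex line, so that the slice-open condition is easy to verify while there is no room left to fit a four-dimensional Euclidean ball. The $\sigma$-ball $\Sigma(p,r)$ highlighted in the introduction, for $p\in\mathbb{H}\setminus\mathbb{R}$ and $r<|\mathrm{Im}(p)|$, serves equally well as a witness.
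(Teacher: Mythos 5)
Your argument is correct and follows the same overall structure as the paper's proof: the inclusion $\tau\subseteq\tau_s$ is immediate from the definition of the subspace topology on each $\mathbb{C}_I$, and Hausdorffness is inherited from the coarser Euclidean topology. The only real difference is the witness for strictness. The paper uses the $\sigma$-ball $\Sigma(p,r)$ with $p\in\mathbb{H}\setminus\mathbb{R}$, asserting without proof that it is slice-open but not Euclidean-open for every $r>0$; checking the latter for large $r$ (when $\Sigma(p,r)$ is no longer contained in a single slice) takes a small computation at points of $\mathbb{C}_I\setminus\mathbb{R}$. Your witness $\mathbb{C}_I\setminus\mathbb{R}$ is more elementary and completely self-contained: it lives in a single two-dimensional slice, so the slice-open condition reduces to $\Omega_{\pm I}$ open and $\Omega_J=\varnothing$ otherwise, and the failure of Euclidean openness is immediate by dimension. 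Your closing remark that $\Sigma(p,r)$ with $r<|\mathrm{Im}(p)|$ works for the same reason recovers the paper's example in its easiest case. Either witness is acceptable; yours minimizes the verification burden, while the paper's choice foregrounds the $\sigma$-balls that motivate the whole construction.
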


\begin{proof}
	Since every Euclidean open set in $\mathbb{H}$ is slice-open, we have $\tau\subset\tau_s$ and $\tau_s$ is Hausdorff. Note that  $\Sigma(p,r)$ is slice-open and not open for any $p\in\mathbb{H}\backslash\mathbb{R}$ and $r\in\mathbb{R}_+$. It follows that the slice topology is strictly finer than the Euclidean topology.
\end{proof}

We remark that the slice topology locally coincides with the Euclidean topology on a slice complex plane for any point away from the real axis $\mathbb{R}$, because for any $I\in\mathbb{S}$ the subspace topologies on $\mathbb{C}_I$ of $\tau_s(\mathbb H)$ and $\tau(\mathbb H)$ coincide, i.e.
 $$ \tau_s(\mathbb{C}_I)=\tau(\mathbb{C}_I).$$
However,  $\tau_s(\mathbb H)$ is quite different from the Euclidean topology $\tau(\mathbb H)$ near $\mathbb{R}$ as  demonstrated by the following example.

\begin{exa}\label{ex-nsi}
	Fix $I\in\mathbb{S}$. We construct a slice-open set $\Omega$ in $\mathbb{H}$ as
\begin{equation}\label{eq-ob}
	\Omega:=\bigcup_{J\in\mathbb{S}}\Omega_J,
	\end{equation}
where
	\begin{equation*}
	\Omega_J:=\left\{
	\begin{aligned}
	&\{x+yJ\in\mathbb{C}_J:x^2+\frac{y^2}{\mbox{dist}(J,\mathbb{C}_I)}<1\},\qquad&&J\neq\pm I,
	\\&\{x+yJ\in\mathbb{C}_J:x^2+y^2<1\}, &&J=\pm I.
	\end{aligned}\right.
	\end{equation*}
	Here $\mbox{dist}(J,\mathbb{C}_I)$ is the Euclidean distance from $J$ to $\mathbb{C}_I$.

By the construction, we know that $\Omega$ is slice-open. But $\Omega$ is not  open in $\mathbb{H}$ since $0\in\Omega$ and  $0$ is
     not  in the  Euclidean interior  of $\Omega$.
     This is because $\Omega_J$ is an ellipse  whose
      minor semi-axis $\sqrt{\mbox{dist}(J,\mathbb{C}_I)}$  tends to $0$, when $J$ approaches $I$ with $J\neq\pm I$.
\end{exa}

The slice topology is finer than the topology induced by the $\sigma$-distance as proved in the following result:
	\begin{prop}
		$\tau_\sigma\subsetneq\tau_s$, where $\tau_\sigma$ is the topology on $\mathbb{H}$ induced by the $\sigma$-distance.
	\end{prop}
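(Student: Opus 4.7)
The plan is to establish the two halves $\tau_\sigma\subseteq\tau_s$ and $\tau_\sigma\neq\tau_s$ separately.

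For the inclusion, since the $\sigma$-balls $\Sigma(p,r)$ generate $\tau_\sigma$, it suffices to verify that each such ball lies in $\tau_s$. This is already asserted in the paragraph following Definition \ref{df-so}, but to be explicit I would compute $\Sigma(p,r)\cap\mathbb{C}_J$ for each $J\in\mathbb{S}$. If $p\in\mathbb{C}_J$ the first branch of the $\sigma$-distance gives the Euclidean disk $\{q\in\mathbb{C}_J:|q-p|<r\}$. If $p\notin\mathbb{C}_J$, writing $p=a+bI_p$ with $b=|Im(p)|>0$, the second branch yields
\begin{equation*}
\Sigma(p,r)\cap\mathbb{C}_J=\{x+yJ\in\mathbb{C}_J:(x-a)^2+y^2<r^2-b^2\},
\end{equation*}
which is either empty (when $r\le b$) or a Euclidean disk of radius $\sqrt{r^2-b^2}$. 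Either way the slice is open in $\mathbb{C}_J$, so $\Sigma(p,r)\in\tau_s$.

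For strictness I would exhibit the set $\Omega$ from Example \ref{ex-nsi} as an element of $\tau_s\setminus\tau_\sigma$. It is slice-open by construction and contains $0$. Since $0\in\mathbb{R}$, the first branch of $\sigma$ applies for every $q\in\mathbb{H}$, giving $\sigma(0,q)=|q|$, so every $\sigma$-neighborhood of $0$ is simply a Euclidean ball $B(0,r)$. If $\Omega$ were $\sigma$-open we would have $B(0,r)\subseteq\Omega$ for some $r>0$. But for any such $r$ I can pick $J\in\mathbb{S}\setminus\{\pm I\}$ close enough to $I$ that $\mbox{dist}(J,\mathbb{C}_I)<r^2/4$; then $(r/2)J$ lies in $B(0,r)$ yet fails the defining inequality of $\Omega_J$ because $(r/2)^2/\mbox{dist}(J,\mathbb{C}_I)>1$. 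This contradicts $B(0,r)\subseteq\Omega$, so $\Omega\notin\tau_\sigma$.

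The crux is the strictness step, and the key observation is that at a real point the $\sigma$-topology degenerates to the Euclidean topology, because the first branch of $\sigma$ always applies. Thus any slice-open set that contains a real point without being Euclidean-open there automatically separates $\tau_s$ from $\tau_\sigma$, and the set from Example \ref{ex-nsi} is tailored to have precisely this feature at the origin.
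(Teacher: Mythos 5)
Your proof is correct and follows essentially the same route as the paper: the strictness half uses the very same set $\Omega$ from Example \ref{ex-nsi} and the same mechanism (at $0\in\mathbb{R}$ the $\sigma$-distance reduces to the Euclidean one, while the elliptical slices of $\Omega$ shrink as $J\to I$), with your explicit choice of the point $(r/2)J$ just making concrete the paper's limit $\mathrm{dist}_{\mathbb{C}_J}(0,\mathbb{C}_J\backslash\Omega_J)\to 0$. For the inclusion you reduce to the basis of $\sigma$-balls and compute their slices, whereas the paper argues directly that $U_I$ is open for arbitrary $U\in\tau_\sigma$ using $\sigma|_{\mathbb{C}_I\times\mathbb{C}_I}=\mathrm{dist}_{\mathbb{C}_I}$; both rest on the same observation and are equally valid.
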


\begin{proof}
	 Let $U\in\tau_\sigma$ and $I\in\mathbb{S}$. Then for any $q\in U_I$,
	\begin{equation*}
		r:=\sigma(q,\mathbb{H}\backslash U)>0.
	\end{equation*}
	Note that for each $z,w\in\mathbb{C}_I$, $\sigma(z,w)=dist_{\mathbb{C}_I}(z,w)$, where $dist_{\mathbb{C}_I}(z,w)$ is the Euclidean distance in $\mathbb{C}_I$. Let $B_I(q,r)$ be the ball with center $q$ and radius $r$ in $\mathbb{C}_I$. It is clear that $B_I(q,r)$ is a subset of $U_I$, and $q$ is a point in the interior of $U_I$. Hence $U_I$ is open in $\mathbb{C}_I$ so that $U$ is slice-open and $\tau_\sigma\subset\tau_s$.
	
	To show that the slice topology is strictly finer, we consider the set $\Omega$ defined in \eqref{eq-ob}, Example \ref{ex-nsi}, which is a slice-open set. Let $J\in\mathbb{S}$. Since $\mathbb{H}\backslash\Omega\supset\mathbb{C}_J\backslash\Omega_J$,
	\begin{equation}\label{eq-s0}
		\sigma(0,\mathbb{H}\backslash\Omega)\le \sigma(0,\mathbb{C}_J\backslash\Omega_J)= dist_{\mathbb{C}_J}(0,\mathbb{C}_J\backslash\Omega_J).
	\end{equation}
	Note that
	\begin{equation}\label{eq-lj}
		\lim_{J\rightarrow I, J\neq I}\left[dist_{\mathbb{C}_J}(0,\mathbb{C}_J\backslash\Omega_J)\right]=0.
	\end{equation}
From \eqref{eq-s0} and \eqref{eq-lj} we deduce that $\sigma(0,\mathbb{H}\backslash\Omega)=0$. Hence, $0$ is not an interior point in $\Omega$ under the topology $\tau_\sigma$ and so $\Omega$ is not open in $\tau_\sigma$. However, $\Omega$ is a slice-open set and we conclude that $\tau_\sigma\neq\tau_s$.
\end{proof}

To deal with the difficulties of the topology   near $\mathbb{R}$,   a new notion, called real-connectedness, comes up. This provides an effective tool since the slice topology has a real-connected subbase.

\begin{defn}
	A subset $\Omega$ of $\mathbb{H}$ is called real-connected, if $$\Omega_{\mathbb{R}}:=\Omega\cap\mathbb{R}$$ is connected in $\mathbb{R}$. In particular, when $\Omega\cap\mathbb{R}=\varnothing$, $\Omega$ is real-connected.
\end{defn}

\begin{prop}\label{pr-sor}
	For any slice-open set $\Omega$ in $\mathbb{H}$ and $q\in\Omega$, there is a real-connected st-domain $U\subset\Omega$ containing $q$.
\end{prop}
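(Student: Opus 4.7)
The plan is to split into the cases $q \notin \mathbb{R}$ and $q \in \mathbb{R}$, handling the former by a small disk in the slice through $q$ and the latter by a more delicate trimming construction.

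If $q \notin \mathbb{R}$, I would pick $I \in \mathbb{S}$ with $q \in \mathbb{C}_I$ and take $U$ to be a small Euclidean disk centered at $q$, contained in the open set $\Omega_I$ and disjoint from $\mathbb{R}$. Then $U_K = U$ when $K = \pm I$ and $U_K = \varnothing$ otherwise, so $U$ is slice-open; a Euclidean disk in $\mathbb{C}_I$ is path-connected, and Euclidean paths in a slice are slice-continuous (the subspace slice topology on $\mathbb{C}_I$ equals the Euclidean one), yielding slice-connectedness; and $U \cap \mathbb{R} = \varnothing$ gives real-connectedness.

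For $q \in \mathbb{R}$, I would first let $A$ denote the connected component of the open set $\Omega \cap \mathbb{R}$ containing $q$; this is an open interval. For each $J \in \mathbb{S}$, since $A \subset \Omega_J$ is connected, it lies in a unique connected component $C_J$ of $\Omega_J$, and by maximality of $A$ inside $\Omega \cap \mathbb{R}$ it is also precisely the component of $C_J \cap \mathbb{R}$ containing $q$. I would set $B_J := (C_J \cap \mathbb{R}) \setminus A$ and observe that $\overline{B_J}$ (closure in $\mathbb{C}_J$, which coincides with the closure in $\mathbb{R}$) is closed in $\mathbb{C}_J$ and disjoint from the open interval $A$. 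Hence $C_J \setminus \overline{B_J}$ is open in $\mathbb{C}_J$ and contains $A$; I would let $D_J$ be its connected component containing $A$. By construction $D_J$ is open and connected in $\mathbb{C}_J$ with $D_J \cap \mathbb{R} = A$. Finally I would set $U := \bigcup_{J \in \mathbb{S}} D_J$.

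The verification that $U$ is the desired real-connected st-domain then goes as follows. The inclusions $q \in A \subset D_J \subset U \subset \Omega$ are immediate. For slice-openness, I compute for each $K \in \mathbb{S}$ that $U_K = D_K \cup D_{-K} \cup \bigcup_{J \neq \pm K} (D_J \cap \mathbb{R}) = D_K \cup D_{-K} \cup A = D_K \cup D_{-K}$, using $D_J \cap \mathbb{R} = A$ and $A \subset D_K$; this is a union of two sets open in $\mathbb{C}_K$. Slice-path-connectedness follows since every $p \in U$ lies in some $D_J$, which is Euclidean-path-connected and contains $q$, and such Euclidean paths are slice-continuous. Real-connectedness follows from $U \cap \mathbb{R} = \bigcup_J (D_J \cap \mathbb{R}) = A$. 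The main obstacle is precisely the slice-openness at points of $A \setminus \{q\}$: the naive candidate $\bigcup_J C_J$ can fail to be slice-open because a single $C_J$ may contain real components of $\Omega \cap \mathbb{R}$ other than $A$, and such stray real segments cannot be thickened to two-dimensional neighborhoods in the union across all slices (as in the spirit of Example~\ref{ex-nsi}). The trimming step passing from $C_J$ to $D_J$ is designed precisely to remove these stray real components while preserving $A$ and the slice-path-connectedness through $q$.
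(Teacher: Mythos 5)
Your proof is correct. It rests on the same central idea as the paper's proof: the only obstruction to real-connectedness is the presence of components of $\Omega_{\mathbb{R}}$ other than the one through $q$, so one deletes them and takes a connected piece. The execution, however, is genuinely different. The paper's proof is a two-line construction: it sets $U$ to be the slice-connected component containing $q$ of the slice-open set $(\Omega\backslash\Omega_{\mathbb{R}})\cup A$, where $A$ is the real component through $q$ (or $\varnothing$ if $q\notin\mathbb{R}$), and declares the remaining verifications ``easy to check.'' Your construction instead assembles $U$ slice by slice as $\bigcup_{J}D_J$, where each $D_J$ is the planar component through $A$ of $C_J$ with the stray real segments $\overline{B_J}$ trimmed away; note that $C_J\setminus\overline{B_J}=(C_J\setminus\mathbb{R})\cup A$, so your trimming is exactly the slice of the paper's deletion, and your $U$ is contained in (possibly properly in) the paper's $U$. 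What your route buys is that every verification becomes elementary and explicit: slice-openness reduces to $U_K=D_K\cup D_{-K}$, and slice-connectedness to path-connectedness of each planar domain $D_J$ through $q$. In particular you avoid the point the paper leaves implicit, namely that slice-connected components of slice-open sets are themselves slice-open (which requires local connectedness of $\tau_s$, established in the paper only afterwards). The cost is length and the case split; the diagnosis in your last sentences of why the naive candidate $\bigcup_J C_J$ fails is exactly right and matches the phenomenon of Example~\ref{ex-nsi}.
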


\begin{proof} We take $U$  to be the slice-connected component of the set
$$(\Omega\backslash\Omega_{\mathbb{R}})\cup A$$ containing $q$.
 Here when
    $q\in\mathbb{R}$,  we take $A$ to be the connected component of $\Omega_{\mathbb{R}}$ containing $q$ in $\mathbb{R}$;
  otherwise, we set $A:=\varnothing$.

  It is easy to check that $q\in U$ and $U$ is a real-connected st-domain.
\end{proof}

Now we describe slice-connectedness  by means of  slice-paths.

\begin{defn}
	A path $\gamma$ in $(\mathbb{H},\tau)$ is said to be on a slice, if $\gamma\subset\mathbb{C}_I$ for some $I\in\mathbb{S}$.
\end{defn}

\begin{prop}
	Every path on a slice is a slice-path.
\end{prop}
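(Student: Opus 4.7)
The statement amounts to checking that Euclidean continuity of a path $\gamma\colon [0,1]\to\mathbb{H}$ whose image lies in a single complex plane $\mathbb{C}_I$ is strong enough to guarantee continuity with respect to the (finer) slice topology $\tau_s$. My plan is to exploit the fact, already noted right after Proposition~2.2 in the excerpt, that the two topologies agree when restricted to a single slice: $\tau_s(\mathbb{C}_I)=\tau(\mathbb{C}_I)$. That identification makes the argument a one-line pullback verification.

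More precisely, let $\gamma$ be a path on a slice, so $\gamma\colon[0,1]\to(\mathbb{H},\tau)$ is continuous and $\gamma([0,1])\subset\mathbb{C}_I$ for some $I\in\mathbb{S}$. To show $\gamma$ is a slice-path, I need to verify continuity of $\gamma$ viewed as a map $[0,1]\to(\mathbb{H},\tau_s)$. First I would pick an arbitrary slice-open set $U\in\tau_s$. Since the image of $\gamma$ lies in $\mathbb{C}_I$, the preimage satisfies
\begin{equation*}
\gamma^{-1}(U)=\gamma^{-1}(U\cap\mathbb{C}_I)=\gamma^{-1}(U_I).
\end{equation*}
By Definition~\ref{df-so}, $U_I$ is open in $\mathbb{C}_I$ in the Euclidean topology, and by hypothesis $\gamma$ is Euclidean-continuous into $\mathbb{C}_I$, so $\gamma^{-1}(U_I)$ is open in $[0,1]$. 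This shows $\gamma^{-1}(U)$ is open for every $U\in\tau_s$, so $\gamma$ is continuous with respect to $\tau_s$, i.e.\ a slice-path.

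There is no real obstacle here: the only subtle point is the rewriting $\gamma^{-1}(U)=\gamma^{-1}(U_I)$, which is immediate from the image constraint but is the step that transfers the problem from the global topology $\tau_s$ on $\mathbb{H}$ to the slice-open set $U_I\subset\mathbb{C}_I$, where the definition of $\tau_s$ reduces to the Euclidean topology. Everything else is formal.
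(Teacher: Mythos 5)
Your proof is correct and is essentially the paper's argument: the paper simply cites the coincidence $\tau_s(\mathbb{C}_I)=\tau(\mathbb{C}_I)$, and your preimage computation $\gamma^{-1}(U)=\gamma^{-1}(U_I)$ is exactly the verification behind that citation.
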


\begin{proof}
	It follows directly from the fact that $\tau_s(\mathbb{C}_I)=\tau(\mathbb{C}_I)$ for any $I\in\mathbb{S}$.
\end{proof}

\begin{prop}\label{pr-rcsd}
	Assume that an st-domain $U$ is real-connected.
	\begin{enumerate}[label=(\roman*)]
		
		\item If $U_\mathbb{R}=\varnothing$, then $U\subset\mathbb{C}_I$ for some $I\in\mathbb{S}$.
		
		\item If $U_\mathbb{R}\neq\varnothing$, then for any $q\in U$ and $x\in U_\mathbb{R}$, there exists  a path on a slice from $q$ to $x$.
			
	\end{enumerate}
\end{prop}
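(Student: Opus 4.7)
The plan is to exhibit, in each part, a nonempty slice-clopen subset of $U$ and then invoke the slice-connectedness of the st-domain $U$ to conclude it exhausts $U$.

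\emph{Part (i).} Fix $p\in U$ and choose $I\in\mathbb{S}$ with $p\in\mathbb{C}_I$, setting $V:=U\cap\mathbb{C}_I$. Since $U_\mathbb{R}=\varnothing$, for every $J\in\mathbb{S}$ the intersection $V\cap\mathbb{C}_J$ equals $V$ if $J=\pm I$ and equals $V\cap\mathbb{R}=\varnothing$ otherwise; because $V=U_I$ is open in $\mathbb{C}_I$, this makes $V$ slice-open. A symmetric dichotomy shows that $(U\setminus V)\cap\mathbb{C}_J$ equals $\varnothing$ or $U_J$, so $U\setminus V$ is slice-open as well. Hence $V$ is a nonempty slice-clopen subset of the slice-connected set $U$, and $V=U\subset\mathbb{C}_I$.

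\emph{Part (ii).} Fix $x\in U_\mathbb{R}$ and set
$$V:=\{q\in U:\text{there exist } J\in\mathbb{S} \text{ and a path in } U_J \text{ from } q \text{ to } x\}.$$
The constant path at $x$ shows $x\in V$, so $V\neq\varnothing$. I will again show that $V$ is slice-clopen in $U$, so $V=U$. For each $K\in\mathbb{S}$ put $V_K:=V\cap\mathbb{C}_K$; slice-(co)openness reduces to showing $V_K$ is both open and closed in $\mathbb{C}_K\cap U=U_K$.

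To see $V_K$ is open, fix $q\in V_K$. If $q\notin\mathbb{R}$, then $q\in\mathbb{C}_J\cap\mathbb{C}_K$ with $q\notin\mathbb{R}$ forces $\mathbb{C}_J=\mathbb{C}_K$, so the witnessing path from $q$ to $x$ already lies in $U_K$; any $q'$ close to $q$ in $\mathbb{C}_K$ lies in the open set $U_K$ and may be joined to $q$ by a segment in $U_K$, placing $q'\in V_K$. If instead $q\in\mathbb{R}$, then real-connectedness of $U$ gives a real interval with endpoints $q$ and $x$ contained in $U_\mathbb{R}\subset U_K$, which is itself a path on the slice $\mathbb{C}_K$ from $q$ to $x$; once more, openness of $U_K$ lets us prepend a short segment from any nearby $q'\in U_K$. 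The same two-case analysis, applied to a sequence $q_n\in V_K$ converging to $q\in U_K$, gives closedness of $V_K$ in $U_K$: for $q\notin\mathbb{R}$ the witnessing paths of $q_n$ lie in $U_K$ and can be extended by a short segment in $U_K$ from $q$ to $q_n$; for $q\in\mathbb{R}$ the real-interval construction gives $q\in V$ directly.

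The main subtlety is that the \emph{slice} attached to each point of $V$ must be recoverable from the point itself in order to glue short segments to witnessing paths. Off the real axis this is automatic, since each non-real quaternion lies in a unique complex plane. On the real axis a point lies in every slice, so a witnessing path from $q\in U_\mathbb{R}$ need not live in the slice $\mathbb{C}_K$ through a nearby perturbation $q'$; this is exactly where the hypothesis of real-connectedness of $U$ enters, providing a fallback path along $U_\mathbb{R}$ which lies in every slice simultaneously.
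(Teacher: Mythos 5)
Your proof is correct, and while it rests on the same mechanism as the paper's argument --- a clopen decomposition in the slice topology, exploiting that each $U_K$ is open in $\mathbb{C}_K$ and that real-connectedness lets one cross between slices along $\mathbb{R}$ --- the decomposition you choose is genuinely different. For (i) the paper observes $U\subset\bigsqcup_{J\in\mathbb{S}}\mathbb{C}_J^+$ with each $\mathbb{C}_J^+$ slice-open and concludes $U\subset\mathbb{C}_I^+$ for a single $I$ (marginally stronger than $U\subset\mathbb{C}_I$); your split of $U$ into $U\cap\mathbb{C}_I$ and its complement reaches the stated conclusion just as directly. For (ii) the paper localizes: it takes the connected component $V$ of $U_I$ through the given point $q$, shows $V\cap\mathbb{R}\neq\varnothing$ by exhibiting $V$ and $U\setminus V$ as complementary nonempty slice-open sets whenever $V\cap\mathbb{R}=\varnothing$, and then concatenates a path in $V$ to a real point with a path in $U_\mathbb{R}$ to $x$. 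You instead globalize, proving that the set of points reachable from $x$ by a path on a slice is slice-clopen; this handles all $q$ simultaneously and isolates exactly where real-connectedness is indispensable (gluing at real points, where the slice through a point is not unique), at the cost of a longer case analysis. One small point to tighten: in your closedness step for $q\notin\mathbb{R}$ you assert that the witnessing paths of the $q_n$ lie in $U_K$, which fails if some $q_n$ is real (its witness may live in another slice); either discard finitely many terms, since eventually $q_n\notin\mathbb{R}$, or replace the witness by the real-interval path through $U_\mathbb{R}$, as you already do in the openness step. This is a one-line repair, not a gap in the approach.
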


\begin{proof}
	(i)  If $U_\mathbb{R}=\varnothing$, then
	\begin{equation*}
	U\subset\bigsqcup_{J\in\mathbb{S}}\mathbb{C}^+_J,
	\end{equation*}
	where $$\mathbb{C}_J^+:=\{x+yJ\in\mathbb{H}:y>0\}$$
 is a slice-open set in $\mathbb{H}$ for any $J\in\mathbb{S}$.
This means that
$$U\subset\mathbb{C}_I^+$$ for some $I\in\mathbb{S}$ since
$U$ is  slice-connected,

	(ii) We fix $q\in U$ and $x\in U_\mathbb{R}$. Take $I\in\mathbb S$ such that  $q\in\mathbb{C}_I$.
 Since $U$ is an st-domain in $\mathbb H$,  by definition $U_I$ is an open set in the plane $\mathbb C_I$.
 Let $V$ be the
  connected component of $U_I$ containing $q$.

By definition we have that $\mathbb{C}_I\backslash\mathbb{R}$ and $\bigcup_{J\in\mathbb{S}\backslash\{\pm I\}}(\mathbb{C}_J\backslash\mathbb{R})$ are slice-open.
  If $V_\mathbb{R}=\varnothing$, then

  \begin{equation*}
  V=U\cap(\mathbb{C}_I\backslash\mathbb{R})\qquad\mbox{and}\qquad U\backslash V=U\cap\bigg[\bigcup_{J\in\mathbb{S}\backslash\{\pm I\}}(\mathbb{C}_J\backslash\mathbb{R})\bigg]
  \end{equation*}
 are slice-open . Since $U$ is slice-connected and nonempty, it follows from
  $$  U= V \bigsqcup (U\backslash V)$$
    that $V=U$.  This implies  $U_\mathbb{R}=V_\mathbb{R}=\varnothing$, which is a contradiction. We thus conclude
    $$V_\mathbb{R}\neq\varnothing. $$

We take a point  $x_0\in V_\mathbb{R}$.  Since
$V$ is  the
  connected component of $U_I$ containing $q$,
there exists a path $\alpha$ in $V$ from $q$ to $x_0$.
Because $U$ is real-connected, we have  a path $\beta$ in $U_\mathbb{R}$ from $x_0$ to $x$. It is clear that $\alpha\beta$ is a path on a slice from $q$ to $x$.
	\end{proof}

\begin{cor}\label{pr-rcsd394}
	Assume that an st-domain $U$ is real-connected.
	\begin{enumerate}[label=(\roman*)]

		\item $U_I$ is a domain in $\mathbb{C}_I$ for any $I\in\mathbb{S}$.
		
		\item For any $p,q\in U$, there exist two paths $\gamma_1,\gamma_2$ such that each of them is a path on a slice  in $U$,   $\gamma_1(1)=\gamma_2(0)$,  and $\gamma_1\gamma_2$ is a slice-path from $p$ to $q$.
		
	\end{enumerate}
\end{cor}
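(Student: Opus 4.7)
The plan is to deduce both statements directly from Proposition \ref{pr-rcsd}, treating separately the cases $U_\mathbb{R}=\varnothing$ and $U_\mathbb{R}\neq\varnothing$. The openness of $U_I$ in $\mathbb{C}_I$ already follows from the slice-openness of $U$ (Definition \ref{df-so}), so for (i) I only need to verify connectedness, and for (ii) I only need to assemble the required pair of paths.

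For (i) with $U_\mathbb{R}=\varnothing$, Proposition \ref{pr-rcsd}(i) places $U\subset\mathbb{C}_J$ for some $J\in\mathbb{S}$. When $I=\pm J$, the equality $\tau_s|_{\mathbb{C}_J}=\tau(\mathbb{C}_J)$ identifies slice-connectedness of $U$ with Euclidean connectedness in $\mathbb{C}_I$, so $U_I=U$ is a domain; when $I\neq\pm J$ we have $U_I\subset\mathbb{C}_I\cap\mathbb{C}_J=\mathbb{R}$, hence $U_I\subset U_\mathbb{R}=\varnothing$. For (i) with $U_\mathbb{R}\neq\varnothing$, I would prove $U_I$ path-connected directly: given any $q\in U_I$, Proposition \ref{pr-rcsd}(ii) produces a path on a slice in $U$ from $q$ to a chosen $x\in U_\mathbb{R}$, and inspection of its proof shows that this path lies on the slice $\mathbb{C}_I$ containing $q$, hence inside $U_I$. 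Since $U_\mathbb{R}\subset U_I$ is a real interval, two such paths (for $q$ and $q'$) can be joined through $U_\mathbb{R}$, producing a path in $U_I$ between any pair $q,q'\in U_I$.

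For (ii) with $U_\mathbb{R}=\varnothing$, part (i) together with Proposition \ref{pr-rcsd}(i) shows that $U=U_I$ is an ordinary domain in some $\mathbb{C}_I$; any path $\alpha$ from $p$ to $q$ inside $U$ can then be split at an interior parameter value to yield $\gamma_1,\gamma_2$, both lying on $\mathbb{C}_I$. For (ii) with $U_\mathbb{R}\neq\varnothing$, I would fix a single $x\in U_\mathbb{R}$ and apply Proposition \ref{pr-rcsd}(ii) twice to obtain $\gamma_1$, on a slice, from $p$ to $x$, and $\beta$, on a (possibly different) slice, from $q$ to $x$; setting $\gamma_2:=\beta^{-1}$ yields a path on a slice from $x$ to $q$ with $\gamma_1(1)=x=\gamma_2(0)$, and the concatenation $\gamma_1\gamma_2$ is a slice-path because each factor, as a path on a slice, is a slice-path, and the concatenation of continuous paths in $(\mathbb{H},\tau_s)$ is continuous.

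I do not anticipate a substantial obstacle: all the non-trivial content is already encapsulated in Proposition \ref{pr-rcsd}, and the corollary is obtained by elementary splitting, reversing, and concatenation of paths. The only point worth flagging is that in (ii) the two paths $\gamma_1$ and $\gamma_2$ are allowed to live on \emph{different} slices whose common junction sits on the real axis; this is exactly what makes real-connectedness the natural hypothesis and explains why the $U_\mathbb{R}=\varnothing$ case must be handled separately via Proposition \ref{pr-rcsd}(i).
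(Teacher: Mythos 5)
Your proposal is correct and follows the same route as the paper, which simply states that the corollary ``follows directly from Proposition \ref{pr-rcsd}''; your case split on $U_\mathbb{R}=\varnothing$ versus $U_\mathbb{R}\neq\varnothing$ and the assembly of paths through a point of $U_\mathbb{R}$ is exactly the intended unpacking of that one-line proof. The only cosmetic caveat is that when $U_\mathbb{R}=\varnothing$ and $I$ is not the slice containing $U$, the set $U_I$ is empty (an issue with the statement's phrasing rather than with your argument, and consistent with how the paper later invokes part (i) only when $\Omega_I\neq\varnothing$).
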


\begin{proof}
This follows directly from Proposition \ref{pr-rcsd}.
\end{proof}

\begin{prop}
	The topological space $(\mathbb{H},\tau_s)$ is connected, locally path-connected and path-connected.
\end{prop}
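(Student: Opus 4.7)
The plan is to first establish path-connectedness by direct construction, then derive connectedness as an automatic consequence, and finally verify local path-connectedness using the real-connected st-domains produced by Proposition \ref{pr-sor} in combination with Corollary \ref{pr-rcsd394}(ii). The key mechanism throughout is that every path lying entirely in a single complex slice $\mathbb{C}_I$ is a slice-path (by the proposition proved just above), and that the real axis $\mathbb{R}$ is contained in every slice and thus provides a universal ``spine'' along which slices can be glued.

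For path-connectedness I fix $p,q\in\mathbb{H}$ and choose imaginary units $I,J\in\mathbb{S}$ with $p\in\mathbb{C}_I$ and $q\in\mathbb{C}_J$ (when $p$ or $q$ is real, any choice works). Since $\mathbb{C}_I$ is Euclidean-path-connected, I join $p$ to $0$ by a path in $\mathbb{C}_I$; similarly I join $0$ to $q$ by a path in $\mathbb{C}_J$. Each of these paths lies on a single slice and is therefore a slice-path, and their concatenation is a slice-path from $p$ to $q$. Hence $(\mathbb{H},\tau_s)$ is path-connected, and since every path-connected space is connected, connectedness follows at once.

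For local path-connectedness at a point $q\in\mathbb{H}$, I start with an arbitrary slice-open neighborhood $\Omega$ of $q$. Proposition \ref{pr-sor} supplies a real-connected st-domain $U\subset\Omega$ containing $q$, which is in particular slice-open. By Corollary \ref{pr-rcsd394}(ii), any two points of $U$ are joined inside $U$ by a concatenation $\gamma_1\gamma_2$ of two paths, each lying on a single slice; by the observation of the previous paragraph each $\gamma_i$ is already a slice-path, so $\gamma_1\gamma_2$ is a slice-path in $U$. Thus $U$ is a path-connected slice-open neighborhood of $q$ contained in $\Omega$, and $q$ admits a basis of path-connected slice-open neighborhoods.

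I do not anticipate any genuine obstacle here, since Proposition \ref{pr-sor} and Corollary \ref{pr-rcsd394} have already absorbed the nontrivial content regarding the behavior of $\tau_s$ near $\mathbb{R}$. The only subtlety worth flagging is that connectedness of $(\mathbb{H},\tau_s)$ cannot be inferred from connectedness of $(\mathbb{H},\tau)$, because $\tau_s$ is strictly finer; it is therefore essential to produce explicit slice-paths rather than appeal to the Euclidean topology.
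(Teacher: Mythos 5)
Your proof is correct and follows essentially the same route as the paper: local path-connectedness via Proposition \ref{pr-sor} together with Corollary \ref{pr-rcsd394}(ii), and path-connectedness by concatenating paths on single slices through a point of $\mathbb{R}$, with connectedness as an immediate consequence. The extra detail you supply (explicitly routing through $0$ and flagging that Euclidean connectedness cannot be invoked) is a faithful expansion of the paper's terser argument.
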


\begin{proof}
	It  follows from Proposition \ref{pr-sor} and Corollary  \ref{pr-rcsd394} (ii) that  $(\mathbb{H},\tau_s)$ is locally path-connected.  Since $\mathbb{H}\cap\mathbb{C}_I=\mathbb{C}_I\supset\mathbb{R}$ for any $I\in\mathbb{S}$, we have $(\mathbb{H},\tau_s)$ is path-connected so  that it is also  connected.
\end{proof}

\begin{cor}\label{co-dsd}
	A set $\Omega\subset\mathbb{H}$ is an st-domain if $\Omega_\mathbb{R}\neq\varnothing$ and $\Omega_I$ is a domain in $\mathbb{C}_I$ for any $I\in\mathbb{S}$.
\end{cor}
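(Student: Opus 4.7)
The plan is to verify the two defining properties of an st-domain: slice-openness and slice-connectedness. Slice-openness is immediate from the hypothesis, since a domain in $\mathbb{C}_I$ is in particular open, so $\Omega_I\in\tau(\mathbb{C}_I)$ for every $I\in\mathbb{S}$. By Definition \ref{df-so} and Lemma \ref{lm-sl123}, this gives $\Omega\in\tau_s(\mathbb{H})$.

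For connectedness in $\tau_s$, I would prove the stronger property of path-connectedness by routing every point of $\Omega$ through a fixed real basepoint. Fix any $x_0\in\Omega_\mathbb{R}$, which exists by hypothesis. Given an arbitrary $q\in\Omega$, choose $I\in\mathbb{S}$ such that $q\in\mathbb{C}_I$ (any $I$ works if $q\in\mathbb{R}$). Since $\mathbb{R}\subset\mathbb{C}_I$, we have $x_0\in\Omega\cap\mathbb{C}_I=\Omega_I$; and since $\Omega_I$ is by hypothesis a domain in $\mathbb{C}_I$, it is path-connected, so there is a continuous path $\gamma_q\colon[0,1]\to\Omega_I$ from $x_0$ to $q$.

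The key small point to check is that each such $\gamma_q$ is continuous as a map into $(\mathbb{H},\tau_s)$, not just into $(\mathbb{C}_I,\tau)$. This is exactly the observation already recorded in Section \ref{sc-st} that $\tau_s(\mathbb{C}_I)=\tau(\mathbb{C}_I)$, equivalently (as noted in the proof of Lemma \ref{lm-sl123}) that $\tau_s$ is the final topology with respect to the inclusions $i_I\colon\mathbb{C}_I\hookrightarrow\mathbb{H}$, which makes each such inclusion continuous. Hence every $\gamma_q$ is a slice-path in $\Omega$, and for arbitrary $p,q\in\Omega$ the concatenation $\gamma_p^{-1}\cdot\gamma_q$ is a slice-path in $\Omega$ from $p$ to $q$. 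Thus $\Omega$ is slice-path-connected, and in particular slice-connected, completing the proof.

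No serious obstacle is expected. It is worth noting that the hypotheses do \emph{not} force $\Omega_\mathbb{R}$ to be connected in $\mathbb{R}$, so $\Omega$ need not be real-connected; this is why I avoid invoking Corollary \ref{pr-rcsd394}(ii) and instead run the path-construction directly from a single arbitrary basepoint $x_0$, which only uses that $x_0$ lies in \emph{every} slice $\Omega_I$ via $\mathbb{R}\subset\mathbb{C}_I$.
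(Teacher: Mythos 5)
Your proof is correct and follows essentially the same route as the paper: slice-openness is immediate, and slice-connectedness is obtained by fixing a real basepoint $x_0\in\Omega_\mathbb{R}$ (which lies in every slice $\Omega_I$) and joining each point of $\Omega$ to $x_0$ by a path inside the relevant $\Omega_I$, using that paths on a slice are slice-paths. Your added care about why such paths are continuous for $\tau_s$ and your remark that real-connectedness is not needed are both consistent with the paper's argument, which it leaves implicit.
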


\begin{proof}
If $\Omega_I$ is open for any $I\in\mathbb{S}$, then by definition $\Omega$ is slice-open.
Since  $\Omega_{\mathbb{R}}\neq\varnothing$, we can take a fixed point  $x\in\Omega_{\mathbb{R}}$. By hypothesis, $\Omega_I$ is a domain in $\mathbb{C}_I$ for any $I\in\mathbb{S}$, there is a path on a slice from $x$ to each point of $\Omega$. It implies that $\Omega$ is slice-path-connected so that it is also  slice-connected. Thus $\Omega$ is an st-domain.
\end{proof}
Note that there are sets $\Omega$ which are st-domains and such that $\Omega_\mathbb{R}=\varnothing$.
For example, let us consider a fixed $J\in\mathbb{S}$. We set
\begin{equation*}
	\Omega:=B_J(2J,1)=\{q\in\mathbb{C}_J:|q-2J|<1\}.
\end{equation*}
It is evident that $\Omega$ is an st-domain and $\Omega_\mathbb{R}=\varnothing$.

\begin{rmk}\label{rmk-sd}
	By Corollary \ref{co-dsd}, any s-domain is an st-domain. Therefore  the notion of st-domain is a generalization of the notion of s-domain.
\end{rmk}

However not every st-domain $\Omega$ is an s-domain, even when $\Omega$ is a domain in $\mathbb{H}$, as we show in the following example.

\begin{exa}\label{ex-sd} We fix $I\in\mathbb{S}$ and consider a   domain in $\mathbb{H}$, defined by
 $$\Omega:=B(0,2)\cup B(6,2)\cup U,$$
where
	\begin{eqnarray*} U&:=& \{q\in\mathbb{H}:\mbox{dist}(q-I,[0,6])<\frac{1}{2}\}.\end{eqnarray*}

It is easy to check that
$$\Omega_J=B_J(0,2)\cup B_J(6,2)$$
 for any $J\in\mathbb{H}$ with $J\bot I$.
Hence  $\Omega_J$ is not connected in $\mathbb{C}_J$ so that  $\Omega$ is not an s-domain. However $\Omega$ is slice-connected, because any point in $\Omega$ can be connected to $0$ or $6$ by a path in a slice, and $0$ can be connected to $6$ by a path in $\mathbb{C}_I$. And since $\Omega_J$ is open in $\mathbb{C}_J$ for any $J\in\mathbb{S}$, $\Omega$ is an st-domain.
\end{exa}

\section{Identity Principle}\label{sc-ip}

In this section we provide an identity principle for slice regular functions defined on st-domains.

Since the st-domains satisfy  conditions weaker than those one required by s-domains,  the proof of  the  identity principle \ref{th-ip} is more difficult than the one for s-domains.
We need to reduce the problem to the special case where the domain is  real-connected.

\begin{lem}\label{lm-ip}
Assume that an st-domain  $\Omega$  is real-connected.  Let $f$ and $g$ be two slice regular functions on $\Omega$. If $f$ and $g$ coincide on a subset of $\Omega_I$ with an accumulation point in $\Omega_I$ for some $I\in\mathbb{S}$, then $f=g$ on $\Omega$.
\end{lem}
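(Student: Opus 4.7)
The plan is to reduce to the classical identity principle for holomorphic functions of one complex variable, applied slice by slice, using the geometric structure of real-connected st-domains to propagate equality from $\Omega_I$ to every other slice $\Omega_J$ via the real axis.

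First, I would dispatch the degenerate case $\Omega_{\mathbb{R}}=\varnothing$. By Proposition \ref{pr-rcsd} (i) there is some $I'\in\mathbb{S}$ with $\Omega\subset\mathbb{C}_{I'}$. The coincidence hypothesis produces infinitely many points of $\Omega_I\subset\mathbb{C}_I\cap\mathbb{C}_{I'}$ together with a cluster point lying in $\Omega_I$. Since $\mathbb{C}_I\cap\mathbb{C}_{I'}$ is either $\mathbb{C}_I$ (if $I'=\pm I$) or $\mathbb{R}$ otherwise, the alternative $I'\neq\pm I$ would force the cluster point into $\Omega\cap\mathbb{R}=\Omega_{\mathbb{R}}=\varnothing$, a contradiction. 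Hence $\Omega=\Omega_I$ is a domain in $\mathbb{C}_I$, and the classical identity principle applied to the holomorphic function $f_I-g_I$ yields $f=g$ on $\Omega$.

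Second, assume $\Omega_{\mathbb{R}}\neq\varnothing$. By Corollary \ref{pr-rcsd394} (i), $\Omega_I$ is a domain in $\mathbb{C}_I$, so the classical identity principle gives $f_I=g_I$ on $\Omega_I$. In particular $f=g$ on the nonempty set $\Omega_{\mathbb{R}}=\Omega_I\cap\mathbb{R}$, which is open in $\mathbb{R}$ and therefore has accumulation points. For every $J\in\mathbb{S}$, Corollary \ref{pr-rcsd394} (i) again tells us that $\Omega_J$ is a domain in $\mathbb{C}_J$ containing $\Omega_{\mathbb{R}}$, so a further application of the classical identity principle to $f_J-g_J$ yields $f_J=g_J$ on $\Omega_J$. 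Since $\Omega=\bigcup_{J\in\mathbb{S}}\Omega_J$, this gives $f=g$ on $\Omega$.

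The only delicate point I anticipate is the edge case $\Omega_{\mathbb{R}}=\varnothing$, where one must correctly identify the ambient slice containing $\Omega$; everything else is a straightforward two-step propagation (first within $\Omega_I$, then along the real axis into every other slice), made clean by having Corollary \ref{pr-rcsd394} in hand.
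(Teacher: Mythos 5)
Your proof is correct and follows essentially the same route as the paper's: establish $f=g$ on $\Omega_I$ via the classical identity principle (the paper notes that this step formally passes through the Splitting Lemma \ref{lm-sl} to reduce the $\mathbb{H}$-valued holomorphic restrictions to complex-valued ones), dispose of the case $\Omega_{\mathbb{R}}=\varnothing$ via Proposition \ref{pr-rcsd} (i), and otherwise propagate equality through $\Omega_{\mathbb{R}}$ into every slice $\Omega_J$ using Corollary \ref{pr-rcsd394} (i). Your handling of the empty-real-trace case is slightly more explicit than the paper's, but the argument is the same.
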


\begin{proof}
By assumption, we have $\Omega_I\neq\varnothing$ so that Corollary  \ref{pr-rcsd394} (i) implies $\Omega_I$ is a non-empty domain in $\mathbb{C}_I$. Therefore, using the Splitting Lemma and the identity principle for classical holomorphic functions of a complex variable, we deduce that $f$ and $g$ coincide on $\Omega_I$.

If $\Omega_{\mathbb{R}}=\varnothing$, then $\Omega=\Omega_I$ due to  Proposition \ref{pr-rcsd} (i) so that  $f=g$ on $\Omega$.

 Otherwise, we have $\Omega_{\mathbb{R}}\neq\varnothing$. By Corollary \ref{pr-rcsd394} (i), $\Omega_J$ is a domain in $\mathbb{C}_J$ for all $J\in\mathbb{S}$. Since $f=g$ on $\Omega_\mathbb{R}(\subset\Omega_I)$, it follows that $f=g$ on $\Omega_J$ for any $J\in\mathbb{S}$.
	Consequently, $f=g$ on $\Omega=\bigcup_{J\in\mathbb{S}}\Omega_J$.
\end{proof}

Now we can give the proof of the identity principle for st-domains.

\begin{proof}[Proof of Theorem \ref{th-ip}]
	We consider the set
	\begin{equation*}
	A:=\{x\in\Omega:\exists\ V\in\tau_s(\Omega),\ \mbox{s.t.}\ x\in V\ \mbox{and}\ f=g\ \mbox{on}\ V\}.
	\end{equation*}
By definition,  $A$ is a slice-open set in $\Omega$.
	
	Next, we come to show that  $A$ is nonempty.  Due to Proposition \ref{pr-sor}, there exists  a real-connected st-domain $U$ such that it  contains  the accumulation point $p$ and  $U\subset\Omega$. It follows  from  Lemma \ref{lm-ip} applied to $U$ that  $f=g$ on $U$. This means  that $p\in A$  so that  $A$ is nonempty.

Finally, we claim that  $\Omega\backslash A$ is slice-open. From this claim and the fact that $\Omega$ is slice-connected, we conclude that $A=\Omega$ so that $f=g$ on $\Omega$.

It remains to prove the claim.
	Let  $q\in\Omega\backslash A$ be arbitrary. From Proposition \ref{pr-sor}, there exists  a real-connected st-domain $V$ containing $q$ with $V\subset\Omega$. We already know that both $A$ and $V$ are slice-open,  so is $A\cap V$.

If $A\cap V \neq\varnothing$, then $A\cap V$ is a non-empty slice-open.
 Since $f=g$ on $A\cap V$, it follows from  Lemma \ref{lm-ip} that  $f=g$ on $V$. This means that  $q\in A$, a contradiction.

Therefore, we have  $$A\cap V=\varnothing.$$
This implies that  $q$ is a slice-interior  point of $\Omega\backslash A$. Hence  $\Omega\backslash A$ is slice-open. This proves the claim and finishes the proof.
\end{proof}

\section{Slice Functions}\label{sc-sf}

Slice functions play a fundamental role in the theory of slice regular functions. The related stem function theory for slice analysis has been established in the case of real alternative $*$-algebras  \cite{Ghiloni2011001}. See  \cite{Jin2020001} for a recent development.

In this section, we give several equivalent characterizations of slice functions. For convenience, we consider slice functions on an arbitrary domain of definition.

We remark that our definition of the slice function is a different form of the classical one.
\begin{defn}\label{df-s} Let  $\Omega$ be an arbitrary set in $\mathbb{H}$.
	A function  $f:\Omega\rightarrow\mathbb{H}$    is called a slice function if there is a function $F:\mathbb{R}^2\rightarrow\mathbb{H}^{2\times 1}$  such that
	\begin{equation}\label{eq-sf}
	f(x+yI)=(1,I)F(x,y)
	\end{equation}
	for any  $x+yI\in\Omega$ such that $x,y\in\mathbb{R}$,  $I\in\mathbb{S}$, and  $y\ge 0$.

The function $F$ is  referred to as an upper stem function of the slice function $f$.
\end{defn}
We note that we are not requiring, at this stage, any condition on $F$ and since it is defined in $\mathbb{R}^2$,  for $x+Iy\not\in\Omega$, we set $F(x,y)=(0, 0)^T$.
Let us denote
\begin{equation*}
\mathbb{S}^2_*:=\{(I,J)\in\mathbb{S}^2:I\neq J\}.
\end{equation*}
For any $(J,K)\in\mathbb{S}^2_*$ we have the noteworthy identity
\begin{equation}\label{eq-jk}
(J-K)^{-1}J=-K(J-K)^{-1}.
\end{equation}
From this, it is easy to check that
\begin{equation}\label{eq-im}
\left(\begin{matrix}
1&J\\1&K
\end{matrix}\right)^{-1}
=\left(\begin{matrix}
(J-K)^{-1}J&(K-J)^{-1}K\\(J-K)^{-1}&(K-J)^{-1}
\end{matrix}\right).
\end{equation}

\begin{prop}\label{pr-sfe} For any  function $f:\Omega\rightarrow\mathbb{H}$ with $\Omega\subset\mathbb H$, the following statements are equivalent:
	\begin{enumerate} [label=(\roman*)]
	
		\item The function $f$ is  a slice function.
		
		\item  There exists a function  $F:\mathbb{R}^2\rightarrow\mathbb{H}^{2\times 1}$ such that
		\begin{equation}\label{eq-stf}
		f(x+yI)=(1,I)F(x,y)
		\end{equation}
		for any  $x+yI\in\Omega$ with  $x,y\in\mathbb{R}$ and any $I\in\mathbb{S}$.
		
		\item If    $x,y\in\mathbb{R}$, $I\in\mathbb S$,   and  $(J, K)\in\mathbb S^2_*$  such that
	 	$x+yL\in\Omega$  for $L=I, J, K$,   then
		\begin{equation}\label{eq-mf}
		f(x+yI)=(1,I)\left(\begin{matrix}
		1&J\\1&K
		\end{matrix}\right)^{-1}
		\left(\begin{matrix}
		f(x+yJ)\\f(x+yK)
		\end{matrix}\right)
		\end{equation}

		\item If    $x,y\in\mathbb{R}$, $I\in\mathbb S$,   and  $(J, K)\in\mathbb S^2_*$  such that
	 	$x+yL\in\Omega$  for $L=I, J, K$,   then
		\begin{equation}\label{eq-il}
		\begin{split}
		f(x+yI)=&(J-K)^{-1}[Jf(x+yJ)-Kf(x+yK)]\\&+I(J-K)^{-1}[f(x+yJ)-f(x+yK)].
		\end{split}
		\end{equation}
		
		\item  If    $x,y\in\mathbb{R}$, $I\in\mathbb S$,   and  $(J, K)\in\mathbb S^2_*$  such that
	 	$x+yL\in\Omega$  for $L=I, J, K$,   then
		\begin{equation}\label{eq-lf}
		f(x+yI)=(I-K)(J-K)^{-1}f(x+yJ)+(I-J)(K-J)^{-1}f(x+yK).
		\end{equation}
		
	\end{enumerate}
\end{prop}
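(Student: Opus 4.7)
The plan is to prove the cycle (i) $\Rightarrow$ (ii) $\Rightarrow$ (iii) $\Rightarrow$ (iv) $\Rightarrow$ (v) $\Rightarrow$ (i). The three consecutive implications (iii) $\Rightarrow$ (iv) $\Rightarrow$ (v) are essentially algebraic rewriting: (iii) $\Rightarrow$ (iv) is a line-by-line expansion of $(1,I)$ against the explicit inverse supplied by \eqref{eq-im}, while (iv) $\Leftrightarrow$ (v) follows from \eqref{eq-jk} used in the forms $-K(J-K)^{-1}=(J-K)^{-1}J$ and $-J(K-J)^{-1}=(K-J)^{-1}K$, which reconcile the coefficients of $f(x+yJ)$ and $f(x+yK)$ on the two sides. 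I treat these links as routine.

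The first nontrivial step, (i) $\Rightarrow$ (ii), handles the fact that the stem function in Definition \ref{df-s} is tested only on $y\ge 0$. I use the symmetry $x+yI=x+(-y)(-I)$ together with $-I\in\mathbb{S}$: given $F$ satisfying (i), define an extension $\widetilde F$ by $\widetilde F(x,y):=F(x,y)$ for $y\ge 0$ and $\widetilde F(x,y):=(F_1(x,-y),-F_2(x,-y))^T$ for $y<0$. A direct computation then gives $(1,I)\widetilde F(x,y)=F_1(x,-y)-IF_2(x,-y)=(1,-I)F(x,-y)=f(x+(-y)(-I))=f(x+yI)$, which is exactly \eqref{eq-stf}. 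For (ii) $\Rightarrow$ (iii), given $(J,K)\in\mathbb{S}^2_*$ with $x+yJ,x+yK\in\Omega$, the two identities $f(x+yL)=(1,L)F(x,y)$ for $L=J,K$ form a left-linear $2\times 2$ system for $F(x,y)$ whose coefficient matrix is invertible by \eqref{eq-im}; substituting its solution into $f(x+yI)=(1,I)F(x,y)$ yields \eqref{eq-mf}.

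The delicate step, which I expect to be the main obstacle, is (v) $\Rightarrow$ (i): one must manufacture a global upper stem function from the pointwise formula. For each $(x,y)\in\mathbb{R}^2$ with $y>0$ such that at least two distinct $J,K\in\mathbb{S}$ satisfy $x+yJ,x+yK\in\Omega$, I define $F(x,y)$ to be the unique vector solving $(1,J)F(x,y)=f(x+yJ)$ and $(1,K)F(x,y)=f(x+yK)$. The obstacle is \emph{well-definedness}: if $(J_1,K_1)$ and $(J_2,K_2)$ are both admissible, producing candidates $F_1$ and $F_2$, then applying (v) with distinguished pair $(J_1,K_1)$ and $I=J_2$ (respectively $I=K_2$) gives $(1,J_2)F_1=f(x+yJ_2)$ and $(1,K_2)F_1=f(x+yK_2)$, so $F_1$ satisfies the same invertible left-linear system as $F_2$ and hence $F_1=F_2$.

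In the residual cases the construction is either forced or free: if only a single $I\in\mathbb{S}$ satisfies $x+yI\in\Omega$ with $y>0$, set $F(x,y):=(f(x+yI),0)^T$; if $y=0$ and $x\in\Omega$, the requirement that $f(x)=(1,I)F(x,0)$ holds for \emph{every} $I\in\mathbb{S}$ forces $F(x,0)=(f(x),0)^T$ (subtracting the equations for two different units kills the second coordinate); and where no $x+yI$ meets $\Omega$, $F$ may be set to zero. This furnishes an upper stem function, completing (v) $\Rightarrow$ (i) and closing the cycle.
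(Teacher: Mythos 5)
Your proposal is correct and follows essentially the same route as the paper: the reflection trick for (i)$\Rightarrow$(ii), inverting the $2\times 2$ system for (ii)$\Rightarrow$(iii), treating (iii)$\Leftrightarrow$(iv)$\Leftrightarrow$(v) as algebra via \eqref{eq-jk} and \eqref{eq-im}, and closing the loop by building the upper stem function case by case according to how many units $I$ satisfy $x+yI\in\Omega$ (the paper's sets $\mathcal{A}$ and $\mathcal{B}$). Your well-definedness check in the last step is a small bonus the paper sidesteps by simply fixing one admissible pair via the axiom of choice, but it does not change the substance of the argument.
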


\begin{proof}  It follows from \eqref{eq-jk} and \eqref{eq-im}
	  that assertions $(iii)$, $(iv)$, $(v)$ are equivalent.
	
	(i)$\Rightarrow\ (ii)$. If $f$ is a slice function, then there is a function $G=(G_1,G_2)^T:\mathbb{R}^2\rightarrow\mathbb{H}^{2\times 1}$ such that
	\begin{equation*}
	f(x+yI)=(1,I)G(x,y)
	\end{equation*}
	for any  $x+yI\in\Omega$ with $x,y\in\mathbb{R}$,  $I\in\mathbb{S}$,  and  $y\ge 0$.

Hence we can take function  $F:\mathbb{R}^2 \rightarrow\mathbb{H}^{2\times 1}$  defined by
	\begin{equation*}
	F(x,y):=\left\{\begin{split}
		&(G_1,G_2)^T(x,y),&\qquad y\ge 0,
		\\&(G_1,-G_2)^T(x,-y),&\qquad y<0.
		\end{split}\right.
	\end{equation*}
Direct calculation shows that  \eqref{eq-stf} holds.
	
	$(ii)\Rightarrow\ (iii)$. According to \eqref{eq-stf}, we have
	\begin{equation*}
	\left(\begin{matrix}
	f(x+yJ)\\f(x+yK)
	\end{matrix}\right)
	=\left(\begin{matrix}
	1&J\\1&K
	\end{matrix}\right)F(x,y)
	\end{equation*}
	for any $x,y\in\mathbb{R}$ and $(J,K)\in\mathbb{S}^2_*$. This implies that
	\begin{equation}\label{eq-ma}
	F(x,y)=\left(\begin{matrix}
	1&J\\1&K
	\end{matrix}\right)^{-1}
	\left(\begin{matrix}
	f(x+yJ)\\f(x+yK)
	\end{matrix}\right).
	\end{equation}
	Combining this with \eqref{eq-stf}, we deduce that \eqref{eq-mf} holds.
	
	$(iii)\Rightarrow\ (i)$. We consider the sets
	\begin{equation*}
	\mathcal{A}:=\{(x,y)\in\mathbb{R}^2:y\ge 0\ \mbox{and}\ |(x+y\mathbb{S})\cap\Omega|=1\}
	\end{equation*}
	and
	\begin{equation*}
	\mathcal{B}:=\{(x,y)\in\mathbb{R}^2:y\ge 0\ \mbox{and}\ |(x+y\mathbb{S})\cap\Omega|>1\},
	\end{equation*}
where we denoted by $|S|$ the cardinality of the set $S$.

If   $(x,y)\in\mathcal{B}$, then there are at least two distinct  points in the set $(x+y\mathbb{S})\cap\Omega$.
Therefore, 	the axiom of choice shows
that we can choose    $(J_{x,y},K_{x,y})\in\mathbb{S}^2_*$   such that
$$x+yJ_{x,y}, \qquad  x+yK_{x,y}\in  (x+y\mathbb{S})\cap\Omega$$
  for any $(x,y)\in\mathcal{B}$.

  From this, we can construct a function  $G:\mathcal{B}\rightarrow\mathbb{H}^{2\times 1}$ defined    by
	\begin{equation*}
	G(x,y):=\left(\begin{matrix}
	1&J_{x,y}\\1&K_{x,y}
	\end{matrix}\right)^{-1}
	\left(\begin{matrix}
	f(x+yJ_{x,y})\\f(x+yK_{x,y})
	\end{matrix}\right).
	\end{equation*}

Finally, 	we can define our desired  function $F:\mathbb{R}^2\rightarrow\mathbb{H}^{2\times 1}$ via
	\begin{equation*}
	F(x,y):=\left\{\begin{split}(&f(x+yI_{x,y}),0)^T,&\qquad (x,y)\in\mathcal{A},
	\\&G(x,y),&\qquad (x,y)\in\mathcal{B},\\&(0, 0)^T,&\qquad\mbox{otherwise},
	\end{split}
	\right.
	\end{equation*}
	where $I_{x,y}$ is the unique imaginary unit $I\in\mathbb{S}$ such that $x+yI\in\Omega$ for $(x,y)\in\mathcal{A}$.
	It is  easy to check that $F$ satisfies \eqref{eq-sf} so that  $f$ is a slice function.
\end{proof}

We remark that the form in \eqref{eq-lf} is also in \cite[Proposition 6]{Ghiloni2011001} for the related class of functions.

\begin{rmk}\label{rm-rf}
By Proposition \ref{pr-sfe}, the classical representation formula in \cite[Theorem 3.2]{Colombo2009001} can be interpreted in the formalism of slice functions. That is,  any slice regular function defined on an axially symmetric s-domain is a slice function.
\end{rmk}

\section{Extension Theorem}\label{sc-ef}

In \cite[Theorem 4.2]{Colombo2009001}, the extension theorem is generalized from balls centered on the real axis to axially symmetric s-domains.
In this section, we consider its further generalization to non necessarily axially symmetric st-domains.

For any $\mathbb I=(I_1,I_2)\in\mathbb{S}^2_*$, we set
\begin{equation*}
{\tau}[\mathbb I]:=\{(U,V):U\in\tau(\mathbb{C}_{I_1})\ \mbox{and}\ V\in\tau(\mathbb{C}_{I_2})\}.
\end{equation*}

Associated with $$\mathbb U=(U_1,U_2)\in{\tau}[\mathbb I], \quad \mbox{with}
 \quad \mathbb I=(I_1,I_2)\in\mathbb{S}^2_*, $$  we introduce the following three sets:
\begin{eqnarray*}
\mathbb U^+_{s} &:=& \left(U_1\cap \mathbb{C}_{I_1}^+\right) \bigsqcup \left(U_2\cap\mathbb{C}_{I_2}^+\right)\bigsqcup \left(U_1\cap U_2\cap\mathbb{R}\right),
\\
\mathbb U^\Delta_s &:=& \{x+y \mathbb{S}:   (x+yI_1, x+yI_2) \in\mathbb U,    y\in\mathbb{R},  \ y\ge 0\},
\\
\mathbb U^{+\Delta}_s &:=& \mathbb U^+_{s}\cup\mathbb U^\Delta_s.
\end{eqnarray*}
Sometimes we also replace  $\mathbb U^{+\Delta}_s$ by $\mathbb U^{+\Delta}_{s,\mathbb I}$ to emphasize its dependence on $\mathbb I$.

\begin{lem} Let $\mathbb I\in\mathbb{S}^2_*$ be fixed, then
	$\mathbb U^{+\Delta}_s$ is slice-open.
\end{lem}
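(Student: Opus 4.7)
The plan is to fix an arbitrary $I\in\mathbb{S}$ and realise $(\mathbb U^{+\Delta}_s)_I=\mathbb U^{+\Delta}_s\cap\mathbb C_I$ as a finite union of open subsets of $\mathbb C_I$. The key tool is the continuous map
\[
\phi_I\colon \mathbb C_I\longrightarrow \mathbb C_{I_1}\times\mathbb C_{I_2},\qquad \phi_I(x+yI)=(x+|y|I_1,\ x+|y|I_2),
\]
whose continuity is immediate from the continuity of $y\mapsto |y|$. A direct unpacking of the definition of $\mathbb U^\Delta_s$ gives
\[
(\mathbb U^\Delta_s)_I=\phi_I^{-1}(U_1\times U_2),
\]
because a point $a+bI\in\mathbb C_I$ lies on some sphere $x+y\mathbb S$ with $y\ge 0$ precisely when $x=a$ and $y=|b|$, so membership in $(\mathbb U^\Delta_s)_I$ is exactly the condition $(a+|b|I_1,a+|b|I_2)\in U_1\times U_2$. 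By continuity of $\phi_I$, this set is open in $\mathbb C_I$.

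Next I would handle the three summands of $\mathbb U^+_s$. The real-axis summand $U_1\cap U_2\cap\mathbb R$ is already inside $\phi_I^{-1}(U_1\times U_2)$, since $\phi_I(x)=(x,x)$ for real $x$. The half-plane summand $U_k\cap\mathbb C_{I_k}^+$ meets $\mathbb C_I$ non-trivially only when $I=\pm I_k$, and in that case it is an open subset of $\mathbb C_{I_k}=\mathbb C_I$ (intersection of two open sets in $\mathbb C_{I_k}$). Hence
\[
(\mathbb U^{+\Delta}_s)_I=\phi_I^{-1}(U_1\times U_2)\ \cup\ \bigl(U_1\cap\mathbb C_{I_1}^+\cap\mathbb C_I\bigr)\ \cup\ \bigl(U_2\cap\mathbb C_{I_2}^+\cap\mathbb C_I\bigr),
\]
a union of at most three open subsets of $\mathbb C_I$, and therefore open. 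Varying $I$ gives slice-openness of $\mathbb U^{+\Delta}_s$.

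The only genuine step is checking the identity $(\mathbb U^\Delta_s)_I=\phi_I^{-1}(U_1\times U_2)$, which is essentially bookkeeping about how the sphere $x+y\mathbb S$ intersects the plane $\mathbb C_I$ (in the two points $x\pm yI$ when $y>0$, and in the single real point $x$ when $y=0$). One has to be slightly careful in the borderline cases $I=\pm I_1$ or $I=\pm I_2$, where $\phi_I$ maps the lower half-plane of $\mathbb C_I$ into the upper half-plane $\mathbb C_{I_k}^+$ via the absolute value, but no genuine obstacle arises beyond this case analysis. The virtue of the approach is that it replaces the sphere-by-sphere verification with a single continuous preimage, isolating all the book-structure geometry of $\mathbb U^{+\Delta}_s$ into the map $\phi_I$.
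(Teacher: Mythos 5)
Your proof is correct, but it follows a genuinely different route from the paper's. The paper argues pointwise: it splits into the case of a non-real point $q$ (where it produces an explicit ball $B_J(x+yJ,r)$ inside $\mathbb U^\Delta_s$ by pulling back radii from $U_1\cap\mathbb C_{I_1}^+$ and $U_2\cap\mathbb C_{I_2}^+$) and the case of a real point (where the same radius works simultaneously for every $J\in\mathbb{S}$, yielding a Euclidean ball $B(q,r)\subset\mathbb U^{+\Delta}_s$). You instead verify openness of each slice $(\mathbb U^{+\Delta}_s)_I$ globally, via the identity $(\mathbb U^\Delta_s)_I=\phi_I^{-1}(U_1\times U_2)$ for the continuous map $\phi_I(x+yI)=(x+|y|I_1,x+|y|I_2)$, together with the observation that the half-plane summands of $\mathbb U^+_s$ contribute to $\mathbb C_I$ only when $I=\pm I_k$, in which case they are already open there, while the real summand $U_1\cap U_2\cap\mathbb R$ is absorbed into $\phi_I^{-1}(U_1\times U_2)$. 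All of these identifications check out (note that $\phi_I=\phi_{-I}$, so the map is well defined on $\mathbb C_I$, and that the unique sphere $x+y\mathbb S$ with $y\ge0$ through $a+bI$ is $a+|b|\mathbb S$). Your argument buys conceptual economy: the sphere-by-sphere geometry is encapsulated in one continuous preimage, and no $\varepsilon$-radius bookkeeping is needed. The paper's argument buys explicitness: it directly exhibits slice-open neighbourhoods of each point, and it isolates the real-axis case, which is precisely where the slice topology departs from the Euclidean one and where such constructions most often fail.
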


\begin{proof} We need to show that
  any $q\in\mathbb U^{+\Delta}_s$ is a slice-interior point of $\mathbb U^{+\Delta}_s$.

Case 1:  $q\in\mathbb U^{+\Delta}_s\backslash\mathbb{R}$:

If $q\in\mathbb U^+_{s}\backslash\mathbb{R}$, then $q$ is an interior point of $U_1\cap\mathbb{C}_{I_1}^+$ or $U_2\cap\mathbb{C}_{I_2}^+$.
Hence $q$ is a slice-interior point of $\mathbb U^{+}_s$ as well as $\mathbb U^{+\Delta}_s$.

If  $q\in\mathbb U^\Delta_s\backslash\mathbb{R}$, it can be expressed as  $$q=x+yJ$$ for some $J\in\mathbb{S}$, $x,y\in\mathbb{R}$ with $y>0$. By definition of $\mathbb U_s^\Delta$, $$x+yI_1\in U_1\cap\mathbb{C}_{I_1}^+, \qquad  x+yI_2\in  U_2\cap\mathbb{C}_{I_2}^+.$$
Hence there exists  an $r\in\mathbb{R}_+$ such that
$$B_{I_1}(x+yI_1,r)\subset U_1\cap\mathbb{C}_{I_1}^+, \qquad B_{I_2}(x+yI_2,r)\subset U_2\cap\mathbb{C}_{I_2}^+.$$
This means  $$B_J(x+yJ,r)\subset\mathbb U^\Delta_s$$ so that $q$ is a slice-interior of $\mathbb U^{+\Delta}_s$.

\bigskip

Case 2:  $q\in\mathbb U^{+\Delta}_s\cap\mathbb{R}$: 	

	It is easy to check that $$\mathbb U^{+\Delta}_s\cap\mathbb{R}=U_1\cap U_2\cap\mathbb{R}.$$
Since  $q\in\mathbb U^{+\Delta}_s\cap\mathbb{R}$, there exists an  $r\in\mathbb{R}_+$ such that
 $$B_{I_1}(q,r)\subset U_1, \qquad B_{I_2}(q,r)\subset U_2, $$
which  implies, by definition, that
 $$B_J(q,r)\subset\mathbb U^\Delta_s$$ for any $J\in\mathbb{S}$. Hence $B(q,r)\subset\mathbb U^{+\Delta}_s$.
\end{proof}

\begin{thm}\label{pr-ef} Let
  $\mathbb I\in\mathbb{S}^2_*$  and  $\mathbb U=(U_1, U_2)\in{\tau}[\mathbb I]$.  If  $f:U_1\cup U_2\rightarrow\mathbb{H}$ is a function such that  $f|_{U_1}$ and $f|_{U_2}$
  are both holomorphic, then the function  $f|_{\mathbb U^+_{s}}$ admits
 a slice regular extension $\widetilde{f}$ to $\mathbb U^{+\Delta}_s$.
	
	Moreover, if $W$ is an st-domain such that
$$W\subseteq\mathbb U^{+\Delta}_s, \qquad W\cap\mathbb U^+_{s}\neq\varnothing, $$  then $\widetilde{f}|_W$ is a slice function and it is the unique slice regular extension on $W$  of $f|_{W\cap\mathbb U^+_{s}}$.
\end{thm}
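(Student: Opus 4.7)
My plan is to define $\widetilde f$ directly via the representation formula of Proposition~\ref{pr-sfe}(iv) based at $(J,K)=(I_1,I_2)$, and then to verify slice regularity one slice at a time. For $(x,y)\in\mathbb R^2$ with $y\ge 0$ and $(x+yI_1,x+yI_2)\in\mathbb U$, set
\[
A(x,y):=(I_1-I_2)^{-1}\bigl[I_1f(x+yI_1)-I_2f(x+yI_2)\bigr],\qquad B(x,y):=(I_1-I_2)^{-1}\bigl[f(x+yI_1)-f(x+yI_2)\bigr].
\]
On $\mathbb U^\Delta_s$ declare $\widetilde f(x+yI):=A(x,y)+IB(x,y)$ and on $\mathbb U^+_s$ declare $\widetilde f:=f$. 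Substituting $I=I_k$ recovers $f(x+yI_k)$ on $U_k\cap\mathbb C_{I_k}^+$, while $y=0$ gives $A(x,0)=f(x)$ and $B(x,0)=0$; hence the two local definitions agree on the overlap $\mathbb U^+_s\cap\mathbb U^\Delta_s$ and $\widetilde f$ is well defined on $\mathbb U^{+\Delta}_s$.

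The heart of the argument is slice regularity. Writing the $\mathbb C_{I_k}$-holomorphy of $f|_{U_k}$ as $\partial_y f=I_k\,\partial_x f$ and differentiating $A,B$ directly, the key identities
\[
\partial_x A=\partial_y B,\qquad \partial_y A=-\partial_x B
\]
drop out. For any fixed $I\in\mathbb S$, applying $\partial_x+I\partial_y$ to $A+IB$ on the upper half of $(\mathbb U^{+\Delta}_s)_I$ produces $(\partial_x A-\partial_y B)+I(\partial_x B+\partial_y A)=0$. For a point $u+vI$ with $v<0$ in $(\mathbb U^{+\Delta}_s)_I$, the $\mathbb U^\Delta_s$-parameterization $u+|v|(-I)$ gives $\widetilde f(u+vI)=A(u,-v)-IB(u,-v)$, and the chain rule combined with the same identities again yields the Cauchy--Riemann equation on $\{v<0\}$. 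The slices $\mathbb C_{I_1}$ and $\mathbb C_{I_2}$ are handled by the same method, the upper halves being $f|_{U_k}$ directly.

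For the second assertion, the representation $\widetilde f(x+yI)=(1,I)(A(x,y),B(x,y))^T$ for $y\ge 0$ displays $\widetilde f|_W$ as a slice function in the sense of Definition~\ref{df-s}. Uniqueness then follows from the Identity Principle, Theorem~\ref{th-ip}: any slice regular extension $g$ on the st-domain $W$ agrees with $\widetilde f$ on the nonempty set $W\cap\mathbb U^+_s$, which either contains an open subset of $U_k\cap\mathbb C_{I_k}^+$ for some $k\in\{1,2\}$, or, failing that, contains a nonempty open real interval in $U_1\cap U_2\cap\mathbb R$. In either case this set lies inside some slice $W_J$ and possesses accumulation points in $W_J$, so Theorem~\ref{th-ip} forces $g=\widetilde f$ on all of $W$.

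The principal obstacle is the $C^1$ gluing across the real axis inside the slice-regularity check: one must verify that at $u\in U_1\cap U_2\cap\mathbb R$, the two separately-defined expressions for $\widetilde f$ on the upper and lower halves of a generic slice produce matching one-sided first partial derivatives, so that Cauchy--Riemann persists across $v=0$ by continuity. This will be supplied by the computations $B(u,0)=0$, $\partial_xA(u,0)=\partial_yB(u,0)=f'(u)$ and $\partial_yA(u,0)=\partial_xB(u,0)=0$, where $f'(u)$ is the unambiguous real-axis derivative (well-defined because $f|_{U_1}$ and $f|_{U_2}$ agree on $U_1\cap U_2\cap\mathbb R$).
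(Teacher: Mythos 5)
Your construction coincides with the paper's: the extension on $\mathbb U^\Delta_s$ is the representation-formula expression (the paper writes it in the form of Proposition \ref{pr-sfe}(v), you in the form (iv); these are equivalent by \eqref{eq-jk} and \eqref{eq-im}), the gluing with $f$ on $\mathbb U^+_{s}$ is identical, and uniqueness via the Identity Principle (Theorem \ref{th-ip}) is the same argument. The only real difference is that you carry out explicitly the ``direct calculation'' which the paper delegates to the proof of \cite[Theorem 3.2]{Colombo2009001}: your identities $\partial_x A=\partial_y B$ and $\partial_y A=-\partial_x B$, the lower-half computation via $u+vI=u+|v|(-I)$, and the boundary values $B(u,0)=0$, $\partial_y A(u,0)=\partial_x B(u,0)=0$, $\partial_x A(u,0)=\partial_y B(u,0)=f'(u)$ are all correct, so the slice-regularity check, including the $C^1$ matching across $\mathbb{R}$, goes through.

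One imprecision in the ``moreover'' part: the pair $(A,B)$ is defined only where $x+yI_1\in U_1$ \emph{and} $x+yI_2\in U_2$, so the formula $\widetilde f(x+yI)=(1,I)(A(x,y),B(x,y))^T$ cannot by itself serve as an upper stem function on all of $W$. An st-domain $W\subseteq\mathbb U^{+\Delta}_s$ may contain points $x+yI_k\in U_k\cap\mathbb{C}_{I_k}^+$ whose companion point $x+yI_{3-k}$ lies outside $U_{3-k}$; these lie in $\mathbb U^+_{s}\setminus\mathbb U^\Delta_s$, and there your candidate stem function is undefined. You must complete it piecewise, setting $F(x,y)=(f(x+yI_k),0)^T$ at such $(x,y)$ and $(0,0)^T$ elsewhere, exactly as the paper does with its function $G$; consistency is automatic because over such an $(x,y)$ the only point of $W$ of the form $x+yI$ with $y>0$ is $x+yI_k$ itself. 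With that patch your argument is complete.
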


\begin{proof}	
	Define a function $g:\mathbb U^\Delta_s \longrightarrow \mathbb H$ by
	\begin{equation}\label{eq-fji}
	g(x+yJ):=(J-I_2)(I_1-I_2)^{-1}f(x+yI_1)+(J-I_1)(I_2-I_1)^{-1}f(x+yI_2)
	\end{equation}
	for any $J\in\mathbb{S}$, $x,y\in\mathbb{R}$ with $y\ge 0$ and $x+yI_\lambda\in U_\lambda$, $\lambda=1,2$.

By direct calculation (see the proof of \cite[Theorem 3.2]{Colombo2009001}), we find that  $g$ is slice regular on $\mathbb U^\Delta_s$ and $g=f$ on $\mathbb U^\Delta_s\cap\mathbb U^+_{s}$. Hence the function $\widetilde{f}:\mathbb U^{+\Delta}_s\rightarrow\mathbb{H}$, defined by
	\begin{equation}\label{eq-ff}
	\widetilde{f}:=\left\{
	\begin{split}
	&g,\quad &&\mbox{on}\ \mathbb U^\Delta_s,
	\\&f, &&\mbox{on}\ \mathbb U^+_{s},
	\end{split}
	\right.
	\end{equation}
	is a slice regular extension of $f|_{\mathbb U^+_{s}}$.
	
	If $h: W\longrightarrow \mathbb H$ is a slice regular extension  of $f|_{W\cap\mathbb U^+_{s}}$,
 then we have
  $$h=f=\widetilde{f}  \quad \mbox{on}\quad   W\cap \mathbb U^+_{s}$$
  so that the Identity Principle \ref{th-ip} implies
  $$h=\widetilde{f}|_W.$$
  Consequently,  $\widetilde{f}|_W$ is the unique slice regular extension on $W$ of $f|_{W\cap\mathbb U^+_{s}}$.

By \eqref{eq-im}, \eqref{eq-ff} and direct calculations, we rewrite \eqref{eq-fji} by
	\begin{equation*}
	\widetilde{f}(x+yJ)=(1,J)\mathcal{F}_{x,y}
	\end{equation*}
	for any $x,y\in\mathbb{R}$ and $J\in\mathbb{S}$ with $y\ge 0$ and $x+yK\in W$, $K=J,I_1,I_2$, where
	\begin{equation*}
	\mathcal{F}_{x,y}=
	\left(\begin{matrix}
	1&I_1\\1&I_2
	\end{matrix}\right)^{-1}
	\left(\begin{matrix}
	f(x+yI_1)\\f(x+yI_2)
	\end{matrix}\right).
	\end{equation*}

Now we can introduce a function   $G:\mathbb{R}^2\rightarrow\mathbb{H}$ defined by
	\begin{equation*}
	G(x,y):=\left\{
	\begin{split}
	&\mathcal{F}_{x,y},&&\qquad x+yI_1,x+yI_2\in W
	\\&(f(x+yI_1),0)^T,&&\qquad x+yI_1\in W\ \mbox{and}\ x+y I_{2} \notin W,
\\&(f(x+yI_2),0)^T,&&\qquad x+yI_1\notin W\ \mbox{and}\ x+y I_{2} \in W,
	\\&(0,0)^T,&&\qquad \mbox{otherwise},
	\end{split}
	\right.
	\end{equation*}
It is easy to show that $G$	is an upper stem function of $f|_W$. This means that  $\widetilde{f}$ is slice on $W$ by definition.
\end{proof}

\begin{cor}\label{ex-sb}
	 If $f:B_I(q,r)\rightarrow\mathbb{H}$ is a holomorphic function with $I\in\mathbb{S}$, $q\in\mathbb{C}_I$ and $r\in\mathbb{R}_+$, then it can be uniquely extended  to a slice regular function over  the $\sigma$-ball  $\Sigma(q,r)$.
\end{cor}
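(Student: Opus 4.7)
The strategy is to construct the extension as the $*$-power series of $f$ centred at $q$, invoking the convergence result of Gentili and Stoppato cited in Section~\ref{sc-mr} as \cite[Theorem~8]{Gentili2012001}. First I would extract a right power series for $f$ at $q$. Fix $J\in\mathbb{S}$ with $J\perp I$ and apply the Splitting Lemma~\ref{lm-sl} to the slice-open set $B_I(q,r)$, writing $f = F + GJ$ with $F,G\colon B_I(q,r)\to\mathbb{C}_I$ classical $\mathbb{C}_I$-valued holomorphic functions. Their Taylor expansions at $q$ converge on the whole disc $B_I(q,r)$; since each factor $(z-q)^n\in\mathbb{C}_I$ commutes with any $\mathbb{C}_I$-valued coefficient, combining the two expansions yields $f(z) = \sum_{n\ge 0}(z-q)^n a_n$ with $a_n\in\mathbb{H}$, convergent on $B_I(q,r)$.

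Next I would set $\widetilde f\colon \Sigma(q,r)\to\mathbb{H}$, $\widetilde f(p) := \sum_{n\ge 0}(p-q)^{*n} a_n$. By \cite[Theorem~8]{Gentili2012001} the convergence domain of this $*$-power series is precisely $\Sigma(q,r)$, on which $\widetilde f$ is slice regular. On $B_I(q,r)$ the points $p$ and $q$ both lie in the commutative slice $\mathbb{C}_I$, so the Cauchy product defining $(p-q)^{*n}$ collapses to the ordinary power $(p-q)^n$; hence $\widetilde f|_{B_I(q,r)}$ is exactly the Taylor series of $f$, i.e.\ $f$ itself, and we have produced a slice regular extension.

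For uniqueness I would invoke the Identity Principle~\ref{th-ip}, which requires $\Sigma(q,r)$ to be an st-domain. Writing $q = a + bI$: if $|b|\ge r$ then $\Sigma(q,r) = B_I(q,r)$ lies in the single slice $\mathbb{C}_I$ and is Euclidean-connected there, hence slice-connected; if $|b|<r$, every slice $\Sigma(q,r)\cap\mathbb{C}_K$ is a Euclidean disc meeting the common real interval $(a-\sqrt{r^2-b^2},\,a+\sqrt{r^2-b^2})$, so Corollary~\ref{co-dsd} applies. Any two slice regular extensions of $f$ coincide on $B_I(q,r)$, a subset of the $I$-slice of $\Sigma(q,r)$ with plentiful accumulation points, and Theorem~\ref{th-ip} forces equality on the whole $\sigma$-ball.

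The main subtlety I foresee is verifying termwise that $(p-q)^{*n} = (p-q)^n$ on $\mathbb{C}_I$: this relies on the commutativity of $\mathbb{C}_I$ so that the Cauchy product $\sum_{k+l=n} a_k a_l$ for the coefficient sequence associated to $p-q$ matches the ordinary $n$-th power. Once this is in hand, the rest is just an orchestration of the Splitting Lemma, the cited convergence result of Gentili and Stoppato, and the Identity Principle of the present paper.
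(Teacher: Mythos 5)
Your argument is correct, but it takes a genuinely different route from the paper. The paper deduces Corollary \ref{ex-sb} as an immediate application of the Extension Theorem \ref{pr-ef} it has just proved: after disposing of the trivial case $B_I(q,r)\cap\mathbb{R}=\varnothing$ (where $\Sigma(q,r)=B_I(q,r)$), it takes $\mathbb{I}=(I,-I)$ and $\mathbb{U}=(B_I(q,r),B_I(q,r))$, identifies $\mathbb{U}^{+\Delta}_s$ with $\Sigma(q,r)$, and reads off both existence and uniqueness from Theorem \ref{pr-ef}. You instead manufacture the extension as the $*$-power series of $f$ at $q$ and import the Gentili--Stoppato convergence theorem; this keeps the corollary independent of the machinery of Section \ref{sc-ef}, at the cost of outsourcing the analytic content to \cite[Theorem 8]{Gentili2012001} --- from which you need not only the shape of the convergence set but also locally uniform convergence on each slice $\Sigma(q,r)_K$, so that the limit of the slice regular partial sums (which are $*$-polynomials) is holomorphic slice by slice. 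A few points to tighten, none fatal: $B_I(q,r)$ is not slice-open when it meets $\mathbb{R}$ (its $K$-slice for $K\neq\pm I$ is a real interval, not open in $\mathbb{C}_K$), so the Splitting Lemma \ref{lm-sl} does not literally apply; but the decomposition $f=F+GJ$ of a $\mathbb{C}_I$-holomorphic $\mathbb{H}$-valued function on a planar domain is elementary, so nothing is lost. Also, the convergence domain of $\sum(p-q)^{*n}a_n$ is $\Sigma(q,r')$ with $r'\ge r$ governed by $\limsup_n|a_n|^{1/n}$, not necessarily ``precisely'' $\Sigma(q,r)$; you only need that it contains $\Sigma(q,r)$. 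Your verification that $\Sigma(q,r)$ is an st-domain via Corollary \ref{co-dsd} and the appeal to the Identity Principle \ref{th-ip} for uniqueness coincide with what the paper's Extension Theorem does internally.
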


\begin{proof}   Case 1: $B_I(q,r)\cap\mathbb{R}=\varnothing$.
	
In this case, we have $$B_I(q,r)=\Sigma(q,r)$$ so that $f=\widetilde{f}$ is the unique slice regular extension of itself.
	
\medskip

	 Case 2: $B_I(q,r)\cap\mathbb{R}\neq \varnothing$.

Now we take
$$\mathbb{I}:=(I,-I)\in\mathbb{S}^2_*, \qquad \mathbb U:=(B_I(q,r),B_I(q,r))\in \tau(\mathbb{I}).$$
It is easy to see
$$
 \mathbb U^{+\Delta}_s=\Sigma(q,r),$$ which is an st-domain.  By Proposition \ref{pr-ef}, $f$ admits a unique slice regular extension $\widetilde{f}$ on $\Sigma(q,r)$.
\end{proof}

\section{Path-slice functions and representation formula}\label{sc-ps}

In this section we extend the representation formula from axially symmetric domains to non-axially-symmetric domains. To this end, we introduce
 the new notion of  path-slice functions.  It turns out that any slice regular function  on a slice-open set  is path-slice, see Theorem \ref{th-ps}.
 We can also prove the representation formula for path-slice functions.

We denote by  $\mathscr{P}(\mathbb{C})$  the set of paths $\gamma: [0,1]\longrightarrow \mathbb C$  with initial point $\gamma(0)$ in $\mathbb{R}$ and we consider its subset
\begin{equation*}
\mathscr{P}(\mathbb{C}^+):=\{\gamma\in\mathscr{P}(\mathbb{C}):\gamma(0,1]\subset\mathbb{C}^+\}.
\end{equation*}

\begin{defn}\label{df-ps}
 A function $f: \Omega\longrightarrow\mathbb H$ with   $\Omega\subset\mathbb{H}$ is called path-slice function if for any $\gamma\in\mathscr{P}(\mathbb{C})$, there is a function $F_\gamma:[0,1]\rightarrow\mathbb{H}^{2\times 1}$ such that
	\begin{equation}\label{eq-sp}
	f\circ\gamma^I=(1,I)F_\gamma
	\end{equation}
	for any $I\in\mathbb{S}$ with $\gamma^I\subset\Omega$.
	
	We call $\{F_\gamma\}_{\gamma\in\mathscr{P}(\mathbb{C})}$   a (path-)stem system of the path-slice function $f$.
\end{defn}
{Obviously, if $\Omega_\mathbb{R}=\varnothing$, then for each $\gamma\in\mathcal{P}(\mathbb{C})$, there is no $I\in\mathbb{S}$ such that $\gamma^I\subset\Omega$. Thus, by definition, every function $f:\Omega\rightarrow\mathbb{H}$ is path-slice.}

Now, we provide    equivalent characterizations  for path-slice functions.

\begin{prop}\label{pr-ps}
	For any function  $f:\Omega\rightarrow\mathbb{H}$   with  $\Omega\subset\mathbb{H}$,  the following statements are equivalent:
	\begin{enumerate}[label=(\roman*)]
		
		\item $f$ is a path-slice function.
		
		\item   For any $\gamma\in\mathscr{P}(\mathbb{C})$, there is an element  $q_\gamma\in\mathbb{H}^{2\times 1}$ such that
		\begin{equation}\label{eq-fq}
		f\circ\gamma^I(1)=(1,I)q_\gamma
		\end{equation}
		for any $I\in\mathbb{S}$ with $\gamma^I\subset\Omega$.
		
		\item   For any $\gamma\in\mathscr{P}(\mathbb{C}^+)$, there is an element  $p_\gamma\in\mathbb{H}^{2\times 1}$ such that
		\begin{equation}\label{eq-ls}
		f\circ\gamma^I(1)=(1,I)p_\gamma
		\end{equation}
		for any $I\in\mathbb{S}$ with $\gamma^I\subset\Omega$.
		
		\item   For any $\gamma\in\mathscr{P}(\mathbb{C})$ and $I,J,K\in\mathbb{S}$ with $J\neq K$ and $\gamma^I,\gamma^J,\gamma^K\subset\Omega$, we have
		\begin{equation}\label{eq-mfp}
		f\circ\gamma^I=(1,I)\left(\begin{matrix}
		1&J\\1&K
		\end{matrix}\right)^{-1}
		\left(\begin{matrix}
		f\circ\gamma^J\\f\circ\gamma^K
		\end{matrix}\right).
		\end{equation}
		
		\item   For any $\gamma\in\mathscr{P}(\mathbb{C})$ and $I,J,K\in\mathbb{S}$ with $J\neq K$ and $\gamma^I,\gamma^J,\gamma^K\subset\Omega$, we have
		\begin{equation*}
		\begin{split}
		f\circ\gamma^I=(J-K)^{-1}(Jf\circ\gamma^J-Kf\circ\gamma^K)+I(J-K)^{-1}(f\circ\gamma^J-f\circ\gamma^K).
		\end{split}
		\end{equation*}
		
		\item   For any $\gamma\in\mathscr{P}(\mathbb{C})$ and $I,J,K\in\mathbb{S}$ with $J\neq K$ and $\gamma^I,\gamma^J,\gamma^K\subset\Omega$, we have
		\begin{equation*}
		f\circ\gamma^I=(I-K)(J-K)^{-1}f\circ\gamma^J+(I-J)(K-J)^{-1}f\circ\gamma^K.
		\end{equation*}
		
	\end{enumerate}
\end{prop}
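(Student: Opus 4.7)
My plan is to prove the six-way equivalence by closing the cycle (i) $\Rightarrow$ (iv) $\Rightarrow$ (i), (i) $\Leftrightarrow$ (ii), (ii) $\Rightarrow$ (iii), and (iii) $\Rightarrow$ (ii), with (iv) $\Leftrightarrow$ (v) $\Leftrightarrow$ (vi) established separately. The structure closely mirrors the proof of Proposition \ref{pr-sfe} for slice functions, with the only genuinely new ingredient appearing in the reduction from paths in $\mathscr{P}(\mathbb{C}^+)$ back to arbitrary paths in $\mathscr{P}(\mathbb{C})$.

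The algebraic trio (iv) $\Leftrightarrow$ (v) $\Leftrightarrow$ (vi) follows by expanding the explicit $2 \times 2$ matrix inverse in \eqref{eq-im} and applying the commutation identity \eqref{eq-jk}, exactly as in the slice-function case. For (i) $\Rightarrow$ (iv), given a path-stem system $\{F_\gamma\}$, I stack the identities $f \circ \gamma^J = (1, J) F_\gamma$ and $f \circ \gamma^K = (1, K) F_\gamma$ into a matrix equation, invert via \eqref{eq-im}, and substitute into $f \circ \gamma^I = (1, I) F_\gamma$ to recover (iv). For (iv) $\Rightarrow$ (i), for each $\gamma \in \mathscr{P}(\mathbb{C})$ I pick (by the axiom of choice) a pair of distinct imaginary units $J_\gamma, K_\gamma$ with $\gamma^{J_\gamma}, \gamma^{K_\gamma} \subset \Omega$ whenever such a pair exists, and define $F_\gamma$ by the corresponding inverse-matrix expression applied to $(f \circ \gamma^{J_\gamma}, f \circ \gamma^{K_\gamma})^T$; in the degenerate cases (fewer than two admissible slices for $\gamma$) a trivial ad hoc choice works, and condition (iv) guarantees $f \circ \gamma^I = (1, I) F_\gamma$ for every $I \in \mathbb{S}$ with $\gamma^I \subset \Omega$.

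The endpoint version (ii) and the pointwise version (i) are linked by a sub-path argument. Setting $q_\gamma := F_\gamma(1)$ yields (i) $\Rightarrow$ (ii). Conversely, for (ii) $\Rightarrow$ (i), for each $\gamma \in \mathscr{P}(\mathbb{C})$ and $t \in [0, 1]$ I consider the reparameterized restriction $\gamma_t(s) := \gamma(st)$, which belongs to $\mathscr{P}(\mathbb{C})$ and satisfies $\gamma^I \subset \Omega \Rightarrow \gamma_t^I \subset \Omega$; defining $F_\gamma(t) := q_{\gamma_t}$, the desired identity follows because $f \circ \gamma^I(t) = f \circ \gamma_t^I(1) = (1, I) q_{\gamma_t}$. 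The trivial inclusion $\mathscr{P}(\mathbb{C}^+) \subset \mathscr{P}(\mathbb{C})$ makes (ii) $\Rightarrow$ (iii) immediate.

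The main obstacle is the closing implication (iii) $\Rightarrow$ (ii): given $\gamma \in \mathscr{P}(\mathbb{C})$, produce $q_\gamma$ satisfying $f \circ \gamma^I(1) = (1, I) q_\gamma$ for every $I$ with $\gamma^I \subset \Omega$. Writing $\gamma(1) = x + yi$, the case $y = 0$ is trivial via $q_\gamma := (f(x), 0)^T$. For $y \neq 0$, my key move is to cut $\gamma$ at its last visit to $\mathbb{R}$: set $s_0 := \max\{s \in [0, 1] : \gamma(s) \in \mathbb{R}\}$, which is well-defined because $\gamma^{-1}(\mathbb{R})$ is closed in $[0, 1]$ and contains $0$, and satisfies $s_0 < 1$. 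The reparameterized tail $\beta(s) := \gamma(s_0 + (1 - s_0)s)$ then has $\beta(0) \in \mathbb{R}$ and, by continuity and the disconnectedness of $\mathbb{C} \setminus \mathbb{R}$, $\beta((0, 1])$ lies entirely in $\mathbb{C}^+$ or its lower counterpart according to the sign of $y$; moreover each $\beta^I$ is a sub-path of $\gamma^I$ and so $\beta^I \subset \Omega$ whenever $\gamma^I \subset \Omega$. If $y > 0$, then $\beta \in \mathscr{P}(\mathbb{C}^+)$, and (iii) directly supplies $q_\gamma := p_\beta$. If $y < 0$, the reflection $\tilde\beta(s) := \overline{\beta(s)}$ lies in $\mathscr{P}(\mathbb{C}^+)$ and satisfies the pointwise identity $\tilde\beta^I = \beta^{-I}$, which preserves inclusion in $\Omega$; applying (iii) to $\tilde\beta$ and writing $p_{\tilde\beta} = (a, b)^T$, I verify that $q_\gamma := (a, -b)^T$ works via the substitution $I \mapsto -I$. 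This last-crossing-plus-reflection trick is the non-trivial content of the proposition, and it is the step I expect to require the most careful justification.
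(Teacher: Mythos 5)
Your proposal is correct and follows essentially the same route as the paper's proof: the same matrix-inversion algebra for (iv)$\Leftrightarrow$(v)$\Leftrightarrow$(vi) and (i)$\Rightarrow$(iv), the same axiom-of-choice construction of the stem data from a chosen pair $(J_\gamma,K_\gamma)$, the same sub-path reparameterization $\gamma(ts)$ for recovering (i) from the endpoint condition, and the same last-real-crossing-plus-reflection reduction (the paper's $\delta$ and sign $\epsilon$) for passing from $\mathscr{P}(\mathbb{C}^+)$ to $\mathscr{P}(\mathbb{C})$. The only difference is organizational --- the paper closes the cycle as (iv)$\Rightarrow$(iii)$\Rightarrow$(ii)$\Rightarrow$(i) whereas you prove (iv)$\Rightarrow$(i) directly and (ii)$\Leftrightarrow$(iii) separately --- which does not change the substance.
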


 \begin{proof} From \eqref{eq-jk} and \eqref{eq-im}, one can deduce that
assertions	(iv), (v), and (vi) are equivalent.
	
	$(i)\Rightarrow\ (iv)$. Suppose that $f$ is a path-slice function and let  $\{F_\gamma\}_{\gamma\in\mathscr{P}(\mathbb{C})}$ be its stem system. By \eqref{eq-sp} it follows that
	\begin{equation}\label{eq-fg}
	\left(\begin{matrix}
	f\circ\gamma^J\\f\circ\gamma^K
	\end{matrix}\right)
	=\left(\begin{matrix}
	1&J\\1&K
	\end{matrix}\right)F_\gamma
	\end{equation}
 for any $\gamma\in\mathscr{P}(\mathbb{C})$ and $I,J,K\in\mathbb{S}$ with $J\neq K$ and $\gamma^I,\gamma^J,\gamma^K\subset\Omega$.
It follows from  \eqref{eq-sp} and \eqref{eq-fg} that \eqref{eq-mfp} holds.

	$(iv)\Rightarrow\ (iii)$ Suppose (iv) holds.  We consider the two sets
	\begin{equation}\label{eq-a}
	\mathcal{A}:=\{\gamma\in\mathscr{P}(\mathbb{C}^+):|\{I\in\mathbb{S}:\gamma^I\subset\Omega\}|=1\}
	\end{equation}
	and
	\begin{equation}\label{eq-b}
	\mathcal{B}:=\{\gamma\in\mathscr{P}(\mathbb{C}^+):|\{I\in\mathbb{S}:\gamma^I\subset\Omega\}|>1\}.
	\end{equation}
	By the axiom of choice, there is  $(J_\gamma,K_\gamma)\in\mathbb{S}^2_*$ such that $\gamma^{J_\gamma},\gamma^{K_\gamma}\subset \Omega$ for any $\gamma\in\mathcal{B}$. We denote by $I_\gamma$ the unique imaginary unit in $\mathbb{S}$ such that $\gamma^I\in\Omega$ for any  $\gamma\in\mathcal{A}$.
	
	For any $\gamma\in\mathscr{P}(\mathbb{C}^+)$, we set
	\begin{equation*}
	p_\gamma:=\left\{
	\begin{split}
	&(f\circ\gamma^{I_\gamma},\ 0)^T,&&\qquad\gamma\in\mathcal{A},
	\\&\left(\begin{matrix}
	1&J_\gamma\\1&K_\gamma
	\end{matrix}\right)^{-1}
	\left(\begin{matrix}
	f\circ\gamma^{J_\gamma}(1)\\f\circ\gamma^{K_\gamma}(1)
	\end{matrix}\right),&&\qquad\gamma\in\mathcal{B},
	\\&(0, \ 0)^T,&&\qquad\mbox{otherwise},
	\end{split}
	\right.
	\end{equation*}
	It is immediate to verify  \eqref{eq-ls} holds.
	
	$(iii)\Rightarrow\ (ii)$
	Let $\gamma\in\mathscr{P}(\mathbb{C})$ be arbitrary. We  define
  $$s:=\max\{t\in[0,1]:\gamma(t)\in\mathbb{R}\}$$
and construct the path $\delta: [0,1]\rightarrow\mathbb{C}$ defined by
	\begin{equation*}
	\delta(t):=\left\{
	\begin{split}
	&\gamma(1),&&\qquad\gamma(1)\in\mathbb{R},
	\\&\gamma((1-s)t+s),&&\qquad \gamma(1)\in\mathbb{C}^+,
	\\&\overline{\gamma((1-s)t+s)},&&\qquad\mbox{otherwise}.
	\end{split}
	\right.
	\end{equation*}
By construction
 $$\delta\in\mathscr{P}(\mathbb{C}^+),$$
 moreover, if      $\gamma^I\subset\Omega$ for some $I\in\mathbb S$, then
 $$
   \delta^{\epsilon I}\subset\Omega,$$
 where
	\begin{equation}\label{eq-e}
	\epsilon:=\left\{
	\begin{split}
	&1,&&\qquad \gamma(1)\in \mathbb{C}^+,
	\\&-1,&&\qquad\mbox{otherwise}.
	\end{split}
	\right.
	\end{equation}

We take
	\begin{equation*}
		q_\gamma:=\left\{
		\begin{split}
			&(p_{\delta,1},\epsilon p_{\delta,2})^T&&\qquad  |\{I\in\mathbb{S}:\gamma^I\subset\Omega\}|\ge 1,
			\\&0,&&\qquad\mbox{otherwise},
		\end{split}
		\right.
	\end{equation*}
where $p_{\delta}=(p_{\delta,1},p_{\delta,2})^T\in\mathbb{H}^{2\times 1}$ is an  element satisfying  \eqref{eq-ls}, i.e.,
	\begin{equation*}\label{eq-ls875}
		f\circ\delta^I(1)=(1,I)p_{\delta}
	\end{equation*}
	for any $I\in\mathbb{S}$ with $\delta^I\subset\Omega$.
	
	Obviously, $q_\gamma$ satisfies \eqref{eq-fq} so that  (ii) holds.
	
	$(ii)\Rightarrow\ (i)$ Let $\gamma\in\mathscr{P}(\mathbb{C})$ be arbitrary and  fix a  point $t\in[0,1]$.

We consider   the   path
$\delta:[0,1]\rightarrow\mathbb{C}$, defined by
	\begin{equation*}
	\delta(s):=\gamma(ts).
	\end{equation*}
Then $\delta $ is a path  from $\gamma(0)$ to $\gamma(t)$
such that   $\delta \in \mathscr{P}(\mathbb{C})$.

Let  $q_\delta$ be an element satisfying  \eqref{eq-fq}, i.e.,
\begin{equation*}\label{eq-fq876}
		f\circ\delta^I(1)=(1,I)q_{\delta}
		\end{equation*}
	for any $I\in\mathbb{S}$ with $\delta^I\subset\Omega$.

Now we can define a  function $F_\gamma:[0,1]\rightarrow\mathbb{H}^{2\times1}$ via
	\begin{equation*}
	F_\gamma(t):=\left\{
	\begin{split}
	&q_{\delta},&&\qquad\exists\ I\in\mathbb{S}\ s.t.\ \gamma^I\subset\Omega,
	\\&(0,\ 0)^T &&\qquad \mbox{otherwise}.
	\end{split}
	\right.
	\end{equation*}
We remark that, by construction, the path $\delta$ depends on the parameter $t$.

It is direct to verify that
  \begin{equation*}\label{eq-sp359}
	f\circ\gamma^I=(1,I)F_\gamma
	\end{equation*}
	for any $I\in\mathbb{S}$ with $\gamma^I\subset\Omega$.
 This means that $f$ is path-slice since $\gamma$ is arbitrary.
\end{proof}

\begin{prop}\label{pr-sps}
	Every slice function is a path-slice function.
\end{prop}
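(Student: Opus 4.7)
The plan is to unpack definitions and use the equivalent reformulation of slice functions provided in Proposition \ref{pr-sfe}. A slice function initially only satisfies $f(x+yI)=(1,I)F(x,y)$ on the upper half-plane part of $\Omega$ (i.e.\ $y\ge 0$), but Proposition \ref{pr-sfe}(ii) upgrades this to a stem function valid for every $x+yI\in\Omega$ with no sign restriction on $y$. This removal of the $y\ge 0$ condition is crucial, since a generic path $\gamma\in\mathscr{P}(\mathbb{C})$ can cross the real axis and visit both half-planes.

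Concretely, suppose $f:\Omega\to\mathbb{H}$ is a slice function with stem function $F:\mathbb{R}^2\to\mathbb{H}^{2\times 1}$ as in Proposition \ref{pr-sfe}(ii). Given any path $\gamma\in\mathscr{P}(\mathbb{C})$, write $\gamma(t)=x(t)+iy(t)$ with $x,y:[0,1]\to\mathbb{R}$, and define
\begin{equation*}
F_\gamma(t):=F(x(t),y(t)), \qquad t\in[0,1].
\end{equation*}
Then for any $I\in\mathbb{S}$ with $\gamma^I\subset\Omega$ and any $t\in[0,1]$, we have $\gamma^I(t)=x(t)+y(t)I\in\Omega$, so applying the stem identity gives
\begin{equation*}
(f\circ\gamma^I)(t)=f(x(t)+y(t)I)=(1,I)F(x(t),y(t))=(1,I)F_\gamma(t).
\end{equation*}
This is exactly identity \eqref{eq-sp}, so $\{F_\gamma\}_{\gamma\in\mathscr{P}(\mathbb{C})}$ is a path-stem system for $f$, showing $f$ is path-slice.

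There is essentially no hard step once Proposition \ref{pr-sfe}(ii) is in hand; the only subtlety worth flagging is precisely that the stem identity must be available on all of $\mathbb{R}^2$, including $y<0$, because paths in $\mathscr{P}(\mathbb{C})$ are unconstrained in sign of imaginary part after $t=0$. Everything else is a routine substitution.
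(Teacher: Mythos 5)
Your proof is correct and follows essentially the same route as the paper: both arguments rest on the unrestricted stem identity of Proposition \ref{pr-sfe}(ii) and then substitute the path. The only cosmetic difference is that you verify Definition \ref{df-ps} directly by building the full path-stem system $F_\gamma(t)=F(x(t),y(t))$, whereas the paper checks the equivalent endpoint condition \eqref{eq-fq} of Proposition \ref{pr-ps}(ii); your remark about needing the identity for $y<0$ is exactly the relevant subtlety.
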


\begin{proof}
 If $f$  is a slice function, then \eqref{eq-stf} holds.
If we set $$\gamma^I(t):=x(t)+y(t)I,$$
it is clear that  \eqref{eq-fq} follows from  \eqref{eq-stf}. This implies $f$ is path-slice.
\end{proof}

\begin{thm}\label{th-ps}
	Every slice regular function  on a slice-open set is path-slice.
\end{thm}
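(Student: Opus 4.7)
By Proposition~\ref{pr-ps}(vi), claiming that a slice regular $f$ on a slice-open $\Omega$ is path-slice is equivalent to proving the Representation Formula (Theorem~\ref{th-rf}) along every admissible path. Fix therefore $\gamma\in\mathscr{P}(\mathbb{C})$ and $I,J,K\in\mathbb{S}$ with $J\neq K$ and $\gamma^I,\gamma^J,\gamma^K\subset\Omega$, and aim to verify
\[
f\circ\gamma^I=(I-K)(J-K)^{-1}f\circ\gamma^J+(I-J)(K-J)^{-1}f\circ\gamma^K
\]
on $[0,1]$. The plan is to build, via the Extension Theorem~\ref{pr-ef}, a slice regular $\tilde f$ which is moreover a \emph{slice function} on an axially symmetric slice-open envelope of $\gamma$, so that the formula for $\tilde f$ is automatic by Proposition~\ref{pr-sfe}(v), and then to transfer the equality $f=\tilde f$ to the three paths via the Identity Principle~\ref{th-ip}, anchored by the real point $\gamma(0)$.

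Assume first that $\gamma\subset\mathbb{C}^+\cup\mathbb{R}$; the general case follows by decomposing $[0,1]$ at the closed set $\gamma^{-1}(\mathbb{R})$ and repeating the construction with $\mathbb{I}=(-J,-K)$ on lower-half subarcs (the algebraic identity $((-I)-(-K))((-J)-(-K))^{-1}=(I-K)(J-K)^{-1}$ shows that precisely the same formula is produced). By compactness of $\gamma^L([0,1])$ in the open set $\Omega_L\subset\mathbb{C}_L$, choose a single $r>0$ such that the open $r$-tubular neighborhood $V_L$ of $\gamma^L([0,1])$ in $\mathbb{C}_L$ lies inside $\Omega_L$ for each $L\in\{I,J,K\}$. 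The sets $V_J$ and $V_K$ share their $\mathbb{R}^2$-parametrization, namely the tubular neighborhood $T_r$ of $\gamma([0,1])\subset\mathbb{C}$, so $V_J\cap V_K\cap\mathbb{R}=V_J\cap\mathbb{R}$ is a real neighborhood of $\gamma(0)$. Applying Theorem~\ref{pr-ef} with $\mathbb{I}=(J,K)$ and $\mathbb{U}=(V_J,V_K)$ produces a slice regular $\tilde f$ on the slice-open set $\mathbb{U}^{+\Delta}_s$ extending $f|_{\mathbb{U}^+_s}$; the hypothesis $\gamma\subset\mathbb{C}^+\cup\mathbb{R}$ guarantees $\gamma^L([0,1])\subset\mathbb{U}^{+\Delta}_s$ for each $L\in\{I,J,K\}$.

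Let $W$ be the slice-connected component of $\mathbb{U}^{+\Delta}_s\cap\Omega$ containing $\gamma(0)$; this is an st-domain (slice-open by the local slice-connectedness from Proposition~\ref{pr-sor}) and satisfies $W\cap\mathbb{U}^+_s\supseteq V_J\cap\mathbb{R}\neq\varnothing$, so Theorem~\ref{pr-ef} tells us that $\tilde f|_W$ is a slice function. Each $\gamma^L$ is continuous on the single slice $\mathbb{C}_L$, hence a slice-path starting at $\gamma(0)\in W$ and lying in $\mathbb{U}^{+\Delta}_s\cap\Omega$, so $\gamma^L([0,1])\subset W$ for $L\in\{I,J,K\}$; Proposition~\ref{pr-sfe}(v) then supplies the Representation Formula for $\tilde f$ at every such triple. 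Since $\tilde f=f$ on the real interval $V_J\cap\mathbb{R}\subset W\cap\mathbb{C}_J$, which has accumulation points in $\mathbb{C}_J$, the Identity Principle~\ref{th-ip} forces $f\equiv\tilde f$ throughout the st-domain $W$, and restricting to the three paths completes the proof. The most delicate step is the slice-connectivity claim that places both the tubular neighborhoods and the three paths inside the same slice-component of $\mathbb{U}^{+\Delta}_s\cap\Omega$; this relies essentially on $\gamma(0)\in\mathbb{R}$ belonging to every complex plane $\mathbb{C}_L$ and anchoring the three paths through the real axis.
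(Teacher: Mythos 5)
Your proposal is correct and follows essentially the same route as the paper: a tubular neighborhood of $\gamma^J\cup\gamma^K$ fed into the Extension Theorem \ref{pr-ef}, the slice-connected component $W$ of $\mathbb{U}^{+\Delta}_s\cap\Omega$ anchored at $\gamma(0)$, the Identity Principle \ref{th-ip} to identify $f$ with the slice extension on $W$, and the slice-function representation formula on $W$. The only organizational difference is that you verify characterization (vi) of Proposition \ref{pr-ps} for all of $\mathscr{P}(\mathbb{C})$, hand-conjugating the lower-half subarcs, whereas the paper targets characterization (iii) on $\mathscr{P}(\mathbb{C}^+)$ and lets the equivalences of Proposition \ref{pr-ps} absorb that reduction.
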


\begin{proof}
	Let $\Omega$ be a slice-open set and $f:\Omega\rightarrow\mathbb{H}$ be a slice regular function.
We show  that $f$ is path-slice. To this end, by Proposition \ref{pr-ps} we only need to  verify \eqref{eq-ls},
namely
we need to choose    $p_\gamma\in\mathbb{H}^{2\times 1}$ such that
		\begin{equation*}\label{eq-ls793}
		f\circ\gamma^I(1)=(1,I)p_\gamma
		\end{equation*}
for any $\gamma\in\mathscr{P}(\mathbb{C}^+)$
and $I\in\mathbb{S}$ with $\gamma^I\subset\Omega$.	We have to treat three cases.

Case 1:  Let $\mathcal B$ be as in \eqref{eq-b} and $\gamma\in\mathcal B$.

In virtue of  (\ref{eq-b}), there exist
	 $J,K\in\mathbb{S}$   such that
   $$J \neq K, \qquad \gamma^J,\gamma^K\subset\Omega.$$
   Take  $U_J$ and $U_K$ such that
   $$\gamma^J\subset U_J, \qquad \gamma^K\subset U_K, $$
   and   $U_J$  is a domain  in $\Omega_J$  and  $U_K$ is  a domain in $\Omega_K$.

  Let us set $\mathbb J=(J, K)$
and $\mathbb U=(U_J, U_K)$.
 We consider the function
   $$g=\left. f\right|_{U_J\cup U_K}.$$
This function  satisfies the conditions in the Extension Theorem \ref{pr-ef}. Therefore, $\left. g\right|_{\mathbb U_s^+}$ has a slice regular  extension
$\widetilde g$  over the slice-connected component $W$ of $\mathbb U_{s, \mathbb J}^{+\Delta}\cap \Omega$ containing $\gamma(0)$.
By the Identity Principle, see Theorem \ref{th-ip}, we have
        $$f=\widetilde g \qquad \mbox{on}\quad  W.$$
Since $\widetilde g$ is slice on $W$, it follows that  $f$ is slice on $W$.

Recall that $\gamma\in\mathscr{P}(\mathbb{C}^+)$. By construction we have
$$\gamma^J, \gamma^K\subset \mathbb U_{s, \mathbb J}^{+\Delta}.$$
This implies that for any $L\in\mathbb{S}$
     $$\gamma^L \subset \mathbb U_{s, \mathbb J}^{+\Delta}.$$

Then for any $I\in\mathbb{S}$ with $\gamma^I\subset\Omega$,
     $$\gamma^I \subset \mathbb U_{s, \mathbb J}^{+\Delta}\cap \Omega.$$
  Since $\gamma^I(0)\in W$ and  $W$      is a slice-connected component   of $\mathbb U_{s, \mathbb J}^{+\Delta}\cap \Omega$,
     we thus conclude $$\gamma^I\subset W.$$

Due to the fact that   $f$ is slice on $W$,
       Proposition \ref{pr-sfe} (ii) implies that there exists    a function $F_\gamma:\mathbb{R}^2\rightarrow\mathbb{H}^{2\times 1}$ such that
	\begin{equation}\label{eq-fxy}
	f(x_\gamma+y_\gamma I)=(1,I)F_\gamma(x_\gamma,y_\gamma)
	\end{equation}
for any $I\in\mathbb{S}\mbox{ with }\gamma^I\subset\Omega$, where we have written  $$\gamma(1)=x_\gamma+y_\gamma i$$ for some $x_\gamma,y_\gamma\in\mathbb{R}.$

Finally, we set
   	\begin{equation}\label{eq-pg}
	p_\gamma:=
	F_\gamma(x_\gamma,y_\gamma), \qquad\gamma\in\mathcal{B}.
	\end{equation}

Case 2. Let $\mathcal A$ be as in \eqref{eq-a} and $\gamma\in\mathcal{A}. $

In this case, we take

	\begin{equation}\label{eq-pg423}
	p_\gamma:=
 	 (f\circ\gamma^{I_\gamma}(1),0)^T, \qquad\gamma\in\mathcal{A},
	\end{equation}
where $I_\gamma$ is the unique imaginary unit $I\in\mathbb{S}$ such that $\gamma^{I_\gamma}\subset\Omega$.

Case 3. Let $\gamma\notin\mathcal{A}\cup\mathcal B.$

In this case, we take $p_\gamma:=(0, 0)^T$.

\medskip

With the choice of $p_\gamma$ above,
		it is  clear that $p_\gamma$ satisfies \eqref{eq-ls} as desired.
\end{proof}

\begin{proof}[Proof of Theorem \ref{th-rf}]
	It is a direct consequence of  Proposition \ref{pr-ps} and Theorem \ref{th-ps}.
\end{proof}

\begin{rmk}
A slice regular function on a slice-open set is not necessarily a slice function. To provide an example, let us fix $J\in\mathbb{S}$ and consider $f:\mathbb{H}\backslash\mathbb{R}\rightarrow\mathbb{H}$, defined by
\begin{equation*}
	f(q)=\begin{cases}
		0,& q\in\mathbb{C}_J\backslash\mathbb{R}
		\\1,& \mbox{otherwise}.
	\end{cases}
\end{equation*}
Then $f$ is a slice regular function defined on the slice-open set $\mathbb{H}\backslash\mathbb{R}$ but it is not a slice function. Indeed, $f$ is a constant in each slice $\mathbb{C}_I$, $I\in\mathbb{C}_I$, so that $f$ is a slice regular function. Suppose that $f$ is a slice function. If $x+yI\in\mathbb{H}\backslash\mathbb{R}$ with $I\neq\pm J$, then $f(x+yJ)=f(x-yJ)=0$ and we would have
\begin{equation*}
	f(x+yI)=(1,I)\begin{pmatrix}
		1&J\\1&-J
	\end{pmatrix}^{-1}\begin{pmatrix}
	f(x+yJ)\\f(x-yJ)
\end{pmatrix}=0.
\end{equation*}
However $x+yI\notin\mathbb{C}_J\backslash\mathbb{R}$, so that $f(x+yI)=1$, which is a contradiction.
\end{rmk}

\begin{prop}\label{rm-scs} The set of  slice functions and the set of path-slice functions on axially symmetric slice-path-connected set which intersects with $\mathbb{R}$ contain the same elements.
\end{prop}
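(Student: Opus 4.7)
The easy direction, that every slice function on $\Omega$ is path-slice, is already Proposition \ref{pr-sps}, so the content of the claim is the reverse implication: every path-slice function $f:\Omega\to\mathbb H$, with $\Omega$ axially symmetric, slice-path-connected, and $\Omega_\mathbb R\neq\varnothing$, must be slice. The plan is to build an upper stem function $F:\mathbb R^2\to\mathbb H^{2\times 1}$ as in Definition \ref{df-s}. For $(x,0)$ with $x\in\Omega_\mathbb R$ I would set $F(x,0):=(f(x),0)^T$ and set $F:=(0,0)^T$ off the relevant locus. The essential case is $y>0$ with $x+y\mathbb S\subset\Omega$: here I would produce a path $\gamma\in\mathscr{P}(\mathbb C)$ with $\gamma(1)=x+yi$ such that $\gamma^J\subset\Omega$ for every $J\in\mathbb S$, and then define $F(x,y):=q_\gamma$ via Proposition \ref{pr-ps}(ii). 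Any two distinct $J,K\in\mathbb S$ determine $q_\gamma$ uniquely from $f(x+yJ)$ and $f(x+yK)$ through \eqref{eq-im}, so $F(x,y)$ depends only on $f$, and the identity $f(x+yI)=(1,I)F(x,y)$ follows directly from the path-slice property.

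The main obstacle is producing the path $\gamma$, which reduces to finding a path inside a single slice $\Omega_{I_0}$ from $x+yI_0$ to a real point, for some fixed $I_0\in\mathbb S$. To obtain such a path I would start from a slice-path $\alpha:[0,1]\to\Omega$ with $\alpha(0)=x+yI_0$ and $\alpha(1)\in\Omega_\mathbb R$, supplied by slice-path-connectedness together with $\Omega_\mathbb R\neq\varnothing$. The key topological observation is that $\mathbb H\setminus\mathbb C_{I_0}$ is slice-open: for $J\neq\pm I_0$ its trace on $\mathbb C_J$ is $\mathbb C_J\setminus\mathbb R$, and for $J=\pm I_0$ it is empty. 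Consequently $\Omega_{I_0}$ is slice-closed in $\Omega$, while $\Omega_{I_0}\setminus\mathbb R$ is slice-open in $\Omega$. Let
$$s:=\sup\{t\in[0,1]:\alpha([0,t])\subset\Omega_{I_0}\}.$$
Slice-openness of $\Omega_{I_0}\setminus\mathbb R$ combined with $\alpha(0)\notin\mathbb R$ yields $s>0$; slice-closedness of $\Omega_{I_0}$ yields $\alpha([0,s])\subset\Omega_{I_0}$. If $\alpha(s)\notin\mathbb R$ were to occur with $s<1$, the slice-open neighborhood of $\alpha(s)$ in $\Omega_{I_0}\setminus\mathbb R$ would allow the trail to extend past $s$, a contradiction. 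Hence either $s=1$ (and $\alpha(s)=\alpha(1)\in\Omega_\mathbb R$ by the choice of $\alpha$) or $\alpha(s)\in\mathbb R$ directly; in either case, reversing $\alpha|_{[0,s]}$ gives a path $\beta$ in $\Omega_{I_0}$ from the real point $\alpha(s)$ to $x+yI_0$.

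Writing $\beta(t)=x(t)+y(t)I_0$ and setting $\gamma(t):=x(t)+y(t)i$ yields $\gamma\in\mathscr{P}(\mathbb C)$ with $\gamma(1)=x+yi$ and $\gamma^{I_0}=\beta\subset\Omega$. Axial symmetry of $\Omega$ then upgrades this to $\gamma^J\subset\Omega$ for every $J\in\mathbb S$, since for each $t$ the sphere $x(t)+y(t)\mathbb S$ lies entirely in $\Omega$ whenever $x(t)+y(t)I_0$ does. Proposition \ref{pr-ps}(ii) applied to this $\gamma$ provides the vector $q_\gamma$ with $f(x+yJ)=(1,J)q_\gamma$ for all $J\in\mathbb S$, completing the construction of $F$ and hence witnessing that $f$ is slice. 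The only delicate ingredient is the topological extraction of $\beta$ from the slice-path $\alpha$ through the possibly non-open set $\Omega$; the slice-closedness of $\mathbb C_{I_0}$ together with the slice-openness of $\mathbb C_{I_0}\setminus\mathbb R$ is what makes the extraction succeed.
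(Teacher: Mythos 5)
Your proof is correct and follows essentially the same route as the paper: both extract, from a slice-path joining $q=x+yI_0$ to the real axis, a terminal sub-path lying entirely in the single slice $\mathbb{C}_{I_0}$ and starting at a real point (using that $\mathbb{H}\setminus\mathbb{C}_{I_0}$ is slice-open), lift it to every slice by axial symmetry, and then invoke the path-slice property at the endpoint. The only cosmetic difference is that you conclude by assembling the upper stem function directly through Proposition \ref{pr-ps}(ii) and Definition \ref{df-s}, whereas the paper concludes via Proposition \ref{pr-ps}(iv) together with Proposition \ref{pr-sfe}(iii); these characterizations are equivalent.
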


\begin{proof}

Let $\Omega$ be an axially symmetric path-slice-connected set with $\Omega_{\mathbb{R}}\neq\varnothing$.
According to Proposition \ref{pr-sps}, we just need to prove that any path-slice function on $\Omega$ is slice. Let $f:\Omega\rightarrow\mathbb H$ be a path-slice function and let us prove that $f$ is slice.
Since $\Omega$ is path-slice-connected, for any $z\in\Omega_\mathbb{R}$ and $q \in\Omega$, there is a slice-path $\alpha$ from $z$ to $q$.

We write $q=x+yI$ for some $x,y\in\mathbb{R}$ and $I\in\mathbb{S}$. Since
	\begin{equation*}
	\mathbb{H}\backslash\mathbb{C}_I:=\bigcup_{J\in\mathbb{S}\backslash\{\pm I\}}\mathbb{C}_J^+
	\end{equation*}
	is slice-open, the preimage $\alpha^{-1}(\mathbb{C}_I)$ is closed in $[0,1]$. We write by $[t,1]$ the connected component of $\alpha^{-1}(\mathbb{C}_I)$ containing $1$ for some $t\in[0,1]$.

We consider the path $\gamma:[0,1]\rightarrow\mathbb{C}$,  defined by
	\begin{equation*}
	\gamma(s):
	=\mathcal{P}_I^{-1}\circ\alpha(t+(1-t)s).	
	\end{equation*}
	It is clear that $\gamma$ is in $\mathscr{P}(\mathbb{C})$ and $\gamma^I$ is a path from $\alpha(t)$ to $q$.
	
	Since $\Omega$ is axially symmetric, we have $$\gamma^J\subset\Omega$$ for any $J\in\mathbb{S}$. Since
	\begin{equation*}
	\gamma^J(1)=x+yJ,\qquad\forall\ J\in\mathbb{S},
	\end{equation*}
	it follows from  Proposition \ref{pr-ps} (iv) that
	\begin{equation*}
	f(x+yL)=(1,L)\left(\begin{matrix}
	1&J\\1&K
	\end{matrix}\right)^{-1}
	\left(\begin{matrix}
	f(x+yJ)\\f(x+yK)
	\end{matrix}\right)
	\end{equation*}
for any $L,J,K\in\mathbb{S}$ with $J\neq K$.
	This implies that  $f$ is slice by Proposition \ref{pr-sfe} (iii).
\end{proof}

\begin{rmk}
{
We note that the set of axially symmetric
slice-path-connected open sets with intersection with $\mathbb{R}$ coincide with the set of axially
symmetric s-domains. In fact, it is immediate that any axially symmetric s-domain $\Omega$ is an axially symmetric slice-path-connected open set. To prove the converse statement, we consider an axially symmetric
slice-path-connected open set  $\Omega$ with $\Omega_{\mathbb R}\not=\varnothing$, and we show that it is an axially symmetric s-domain. First of all, we note that for any $I\in\mathbb{S}$ and any $p,q\in\Omega_\mathbb{R}$, there is a path in $\Omega_I$ from $p$ to $q$.
We define the map $\mathcal{P}:\mathbb{H}\rightarrow\mathbb{C}$ by}
{
\begin{equation*}
	\mathcal{P}(x+yJ):=\begin{cases}
		x,&\qquad y=0,\\
		x+|y|i,&\qquad y\neq 0.
	\end{cases}
\end{equation*}
Since $\Omega$ is slice-path-connected, there is a slice-path $\gamma$ from $p$ to $q$. Then $[\mathcal{P}(\gamma)]^I$ is a path in $\Omega_I$ from $p$ to $q$.
 Let now $p\in\Omega_I$ and $q\in\Omega_\mathbb{R}$, one may show that there is a path $\gamma$ in $\mathbb{C}_I$ from $p$ to $q$ reasoning as in the proof of Proposition \ref{pr-rcsd} (ii) and similarly, for any $p,q\in\Omega_I$, there is a path in $\mathbb{C}_I$ from $p$ to $q$.  Hence $\Omega_I$ is a domain and so $\Omega$ is an axially symmetric s-domain.
}

\end{rmk}

\begin{rmk}
	Suppose that  $\Omega$ in Theorem \ref{th-rf} is  an axially symmetric $s$-domain in $\mathbb{H}$. For any $q=x+yI\in\Omega$,  there exists a point $p\in\Omega_\mathbb{R}$ and a path $\gamma$ in $\mathbb{C}$ such that $\gamma^I$ is a path from $p$ to $q$. Since  $\Omega$ is axially symmetric, we know that,  for all $K\in\mathbb{S}$,  $\gamma^K\subset\Omega$ and $\gamma^K(1)=x+yK$. By Theorem \ref{th-rf}, we have
	\begin{equation}\label{eq-frf}
	f(x+yI)=(I-K)(J-K)^{-1}f(x+yJ)+(I-J)(K-J)^{-1}f(x+yK)
	\end{equation} for any $J, K\in \mathbb S$ with $J\not=K$.
This means that Theorem \ref{th-rf} recovers  the classical   representation formula \cite[Theorem 3.2]{Colombo2009001}.
\end{rmk}

\section{Counterexample on non-axially symmetric domains}\label{sc-ex}

In this section, we give an example to illustrate that the classical representation formula may not hold  for non-axially symmetric domains.

Let  $s\in[0,1]$  be fixed. Define a ray  $\gamma_s: [0, 1)\longrightarrow  \mathbb{C}$  by
\begin{equation*}
\gamma_s (t):=\frac{i}{2}+\frac{t}{1-t}e^{i(\frac{\pi }{4}+\frac{s\pi }{2})}.
\end{equation*}
Geometrically, the ray starts from $\frac{i}{2}$ to $\infty$  and the angle between the ray and
the positive real axis
 is $\frac{\pi}{4}+s\frac{\pi}{2}$.

For any continuous function  $$\varphi:\mathbb{S}\rightarrow[0,1],$$
we define a continuous function $F:\mathbb{S}\times[0,1)\rightarrow\mathbb{H}$ by
\begin{equation*}
F(I, s)=\mathcal{P}_I\circ\gamma_{\varphi(I)}(s).
\end{equation*}
The complement of  the image of $F$ is denoted by
\begin{equation*}
\Omega_{\varphi}:=\mathbb{H}\backslash F(\mathbb{S}\times[0,1)).
\end{equation*}

\begin{prop} The set $\Omega_{\varphi}$
	 	is a s-domain and  $$\Omega_\varphi\cap\mathbb{S}=\mathbb{S}\backslash\varphi^{-1}(\frac{1}{2}).$$
\end{prop}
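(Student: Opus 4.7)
The plan is to verify the three conditions of Definition \ref{df-sdr} (Euclidean openness, nonempty real slice, each $\mathbb{C}_I$-slice a domain) together with Euclidean connectedness, and then separately compute $\Omega_\varphi\cap\mathbb{S}$.

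First, I would establish that $F(\mathbb{S}\times[0,1))$ is closed in $\mathbb{H}$, so that $\Omega_\varphi$ is Euclidean-open. The key observation is that $|\gamma_s(t)|\to\infty$ as $t\to 1^-$ uniformly in $s\in[0,1]$, which combined with continuity of $F$ on the compact factor $\mathbb{S}\times[0,T]$ (for each $T<1$) makes $F$ a proper map into $\mathbb{H}$; hence the image is closed. Next, since $\mathrm{Im}\,\gamma_s(t)=\tfrac{1}{2}+\tfrac{t}{1-t}\sin(\tfrac{\pi}{4}+\tfrac{s\pi}{2})\ge \tfrac{1}{2}$ for $s\in[0,1]$, the image $F(\mathbb{S}\times[0,1))$ avoids $\mathbb{R}$. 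Thus $\mathbb{R}\subset\Omega_\varphi$, giving $\Omega_\varphi\cap\mathbb{R}\neq\varnothing$.

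For each $I\in\mathbb{S}$ I would compute the slice trace $F(\mathbb{S}\times[0,1))\cap\mathbb{C}_I$. Since $\mathbb{C}_{I'}\cap\mathbb{C}_I=\mathbb{R}$ whenever $I'\neq\pm I$, and the image avoids $\mathbb{R}$, only the parameters $I'=\pm I$ contribute. Using $\mathcal{P}_{-I}(x+yi)=x-yI$, this shows
\begin{equation*}
F(\mathbb{S}\times[0,1))\cap\mathbb{C}_I \;=\; \mathcal{P}_I\bigl(\gamma_{\varphi(I)}([0,1))\bigr)\;\cup\;\mathcal{P}_{-I}\bigl(\gamma_{\varphi(-I)}([0,1))\bigr),
\end{equation*}
a disjoint union of a properly embedded ray in $\mathbb{C}_I^+$ starting at $I/2$ and a properly embedded ray in the lower half plane starting at $-I/2$. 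The complement of two disjoint rays going to infinity in $\mathbb{C}_I$ is open and connected, so $(\Omega_\varphi)_I$ is a domain in $\mathbb{C}_I$. Euclidean connectedness of $\Omega_\varphi$ then follows immediately: every $(\Omega_\varphi)_I$ is connected and contains $\mathbb{R}$, so $\Omega_\varphi=\bigcup_{I\in\mathbb{S}}(\Omega_\varphi)_I$ is connected. This completes the verification that $\Omega_\varphi$ is an s-domain.

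For the intersection with $\mathbb{S}$, the same slice analysis applies: $I\in F(\mathbb{S}\times[0,1))$ forces $I'=\pm I$. The case $I'=I$ requires $\gamma_{\varphi(I)}(s)=i$, i.e.\ $\tfrac{t}{1-t}e^{i(\pi/4+\varphi(I)\pi/2)}=i/2$, which has a solution (namely $t=1/3$) exactly when the argument equals $\pi/2$, i.e.\ $\varphi(I)=\tfrac{1}{2}$. The case $I'=-I$ requires $\gamma_{\varphi(-I)}(s)=-i$, impossible because the rays stay in the upper half plane. Therefore $I\in F(\mathbb{S}\times[0,1))\iff\varphi(I)=\tfrac{1}{2}$, yielding $\Omega_\varphi\cap\mathbb{S}=\mathbb{S}\setminus\varphi^{-1}(\tfrac{1}{2})$.

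The main obstacle is the geometric verification that the slice trace reduces to exactly two half-plane rays and that their complement in $\mathbb{C}_I$ is connected; the bookkeeping with $\mathcal{P}_{-I}$ (which reflects across the real axis of the slice) is what makes this precise. Everything else is a routine properness and continuity argument.
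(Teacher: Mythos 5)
Your proof is correct and follows essentially the same route as the paper: the trace of $F(\mathbb{S}\times[0,1))$ in each $\mathbb{C}_I$ is identified as the two disjoint closed rays $\gamma_\varphi[I]\cup\gamma_\varphi[-I]$ (one in each open half-plane, since $\mathrm{Im}\,\gamma_s\ge\tfrac{1}{2}$), whose complement is a domain containing $\mathbb{R}$, and the computation of $\Omega_\varphi\cap\mathbb{S}$ reduces to whether $\gamma_{\varphi(I)}$ passes through $i$, i.e.\ $\varphi(I)=\tfrac{1}{2}$. The only difference is the openness step, where the paper exhibits $\Omega_\varphi$ explicitly as the union of the open sets $\left[\bigcup_{K\in\mathbb{S}}B(\tfrac{K}{2},\tfrac{t}{1-t})\right]\cap\left(\mathbb{H}\backslash F(\mathbb{S}\times[0,t])\right)$, whereas you package the same two ingredients (compactness of the truncated image and uniform escape of $|\gamma_s(t)|$ to infinity) as properness of $F$, so that the image is closed --- an equivalent and arguably cleaner formulation.
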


\begin{proof}
	(i) For any $I\in\mathbb{S}$, we denote
	  $$\gamma_\varphi[I]:=\mathcal{P}_I\circ\gamma_{\varphi(I)}([0,1)).$$
Then  $\gamma_\varphi[I]$  is an image of a ray in $\mathbb{C}_I$ from $\frac{I}{2}$ to $\infty$. And the angle between the ray and
the positive real axis
 is
	$$\frac{\pi}{4}+\varphi(I)\frac{\pi}{2}.$$

By definition,
\begin{equation*}
F(\mathbb{S}\times[0,1))=\bigcup_{I\in\mathbb{S}}\gamma_\varphi[I]
\end{equation*}
and
	\begin{equation}\label{eq:def:omega-phi-120}
	\Omega_{\varphi}=\mathbb{H}\backslash\bigcup_{I\in\mathbb{S}}\gamma_\varphi[I].
	\end{equation}

 Since
 \begin{equation*}
\mathcal{P}_I\circ \gamma_{\varphi(I)} (t):=\frac{I}{2}+\frac{t}{1-t}e^{\frac{\varphi(I)\pi I}{2}+\frac{\pi I}{4}}, \qquad \forall \ t\in[0,1),
\end{equation*}
 we have
  $$\left[\bigcup_{K\in\mathbb{S}}B(\frac{K}{2},\frac{t}{1-t})\right]\bigcap \mathcal{P}_I\circ \gamma_{\varphi(I)} [0, 1)=\mathcal{P}_I\circ \gamma_{\varphi(I)} [0, t)$$
  for any $t\in (0, 1)$ and $I\in \mathbb S$.
  Taking the union for all $I\in \mathbb S$, we get
  \begin{equation}\label{eq-bff}
\left[\bigcup_{K\in\mathbb{S}}B(\frac{K}{2},\frac{t}{1-t})\right]\bigcap F(\mathbb{S}\times[0,1))=F(\mathbb{S}\times[0,t)).
\end{equation}

Denote
\begin{equation}\label{eq-atb}
A_t:=\left[\bigcup_{K\in\mathbb{S}}B(\frac{K}{2},\frac{t}{1-t})\right]\bigcap(\mathbb{H}\backslash F(\mathbb{S}\times[0,t])).
\end{equation}
Since $F$ is continuous, we have  $F(\mathbb{S}\times [0,t])$ is compact so that $A_t$ is open.

By \eqref{eq-bff} and \eqref{eq-atb}, we have
\begin{equation*}
A_t=\left[\bigcup_{K\in\mathbb{S}}B(\frac{K}{2},\frac{t}{1-t})\right]\bigcap [\mathbb{H}\backslash F(\mathbb{S}\times[0,1))]=\left[\bigcup_{K\in\mathbb{S}}B(\frac{K}{2},\frac{t}{1-t})\right]\bigcap\Omega_\varphi
\end{equation*}
and
\begin{equation*}
	\begin{split}
		\bigcup_{t\in(0,1)}A_t=&\bigcup_{t\in(0,1)}\left(\left[\bigcup_{K\in\mathbb{S}}B(\frac{K}{2},\frac{t}{1-t})\right]\bigcap\Omega_\varphi\right)
		\\=&\left(\bigcup_{t\in(0,1)}\left[\bigcup_{K\in\mathbb{S}}B(\frac{K}{2},\frac{t}{1-t})\right]\right)\bigcap\Omega_\varphi
		=\mathbb{H}\bigcap\Omega_\varphi=\Omega_\varphi.
	\end{split}
\end{equation*}
This means that $\Omega_{\varphi}$ is open.

Note that $(\Omega_\varphi)\cap \mathbb C_I$ is $\mathbb C_I$ deleting two rays. One is  emitting from $I/2$ lying in the upper space $\mathbb C^+_I$, while the other  one
 emitting from $-I/2$ lying in the lower  space $\mathbb{C}^-_I:=\mathbb C^+_{-I}$.
Therefore,  $(\Omega_\varphi)_I$ is a domain in $\mathbb{C}_I$ and path-connected. And since $\Omega_\varphi\cap\mathbb R=\mathbb R$, $\Omega_\varphi$ is a s-domain.
	
	(ii) Note that for any $I\in\mathbb{S}$, $I\in\gamma_\varphi[I]$ if and only if $\varphi(I)=\frac{1}{2}$. It  follows that $$\Omega_\varphi\cap\mathbb{S}=\mathbb{S}\backslash\varphi^{-1}(\frac{1}{2}).$$
\end{proof}

Let us now fix $J\in\mathbb{S}$.
The classical theory of holomorphic functions shows that the function
 \begin{equation}\label{eq-fp647}
\Psi (z)=\sqrt{2z-J}, \qquad  \forall\  z\in \frac{J}{2}+\mathbb{R}_+,
\end{equation}
 admits  a unique holomorphic extension $\Psi_s$ on $$\mathbb{C}_J\backslash(\gamma_s[J]\cup\gamma_s[-J]), $$
where
  $$\gamma_s[J]:=\mathcal{P}_J\circ\gamma_s([0,1))$$
 for any $s\in[0,1]$.

\begin{rmk}\label{rm-fs}
	The function $\Psi_s$ has the following properties.
\\
(i) For any $s,t\in[0,1]$,
$$\left.\Psi_s\right|_{\mathbb R}=\left.\Psi_t\right|_{\mathbb R}.$$
\\		
(ii) For any $s\in[0,1]$, we have
$$\Psi_s(-J)=\sqrt{3}e^{-\frac{J\pi}{4}}$$
and
		\begin{equation*}
		\Psi_s(J)=\left\{\begin{split}
			&-e^{\frac{J\pi}{4}},&\qquad s\in[0,\frac{1}{2}),\\&e^{\frac{J\pi}{4}},&\qquad s\in(\frac{1}{2},1].
			\end{split}\right.
		\end{equation*}
\\
(iii) For any $s\in[0,1]$, denote $\alpha:=\frac{\pi}{2}+\frac{s\pi}{2}$.  Then for any $\lambda\in\mathbb{R}_+$
\begin{equation*}
\lim_{\theta\rightarrow\alpha^-}\Psi_s(\frac{J}{2}+\lambda e^{\theta J})=\sqrt{\lambda} e^{\frac{\alpha J}{2}}
\end{equation*}
and
\begin{equation*}
\lim_{\theta\rightarrow\alpha^+}\Psi_s(\frac{J}{2}+\lambda e^{\theta J})=-\sqrt{\lambda} e^{\frac{\alpha J}{2}}.
\end{equation*}
This implies  that $\Psi_s$ cannot be  extended continuously to any point in $\gamma_s[J]\backslash\{\frac{J}{2}\}$.		
\end{rmk}

\begin{prop}
	Let $\varphi:\mathbb{S}\rightarrow[0,1]$ be a continuous function and $\Psi$ is as in \eqref{eq-fp647}. Then the function $\Psi_\varphi:\Omega_\varphi\rightarrow\mathbb{H}$ defined by
	\begin{equation}\label{eq-fp}
	\Psi_\varphi(x+yI):=
		\frac{1-IJ}{2}\Psi_{\varphi(I)}(x+yJ)+\frac{1+IJ}{2}\Psi_{\varphi(I)}(x-yJ),
	\end{equation}
	for $y\ge 0$, is the unique slice regular extension of $\Psi$ to $\Omega_\varphi$. In particular, $$(\Psi_\varphi)_J=\Psi_{\varphi(J)}.$$
\end{prop}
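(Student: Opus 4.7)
The plan is to check in order well-definedness of the formula on $\Omega_\varphi$, slice regularity, the restriction identity $(\Psi_\varphi)_J=\Psi_{\varphi(J)}$, and then conclude the extension-and-uniqueness statement.

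For well-definedness, I would verify that if $x+yI\in\Omega_\varphi$ with $y\ge 0$, then both $x+yJ$ and $x-yJ$ lie in the domain $\mathbb{C}_J\setminus(\gamma_{\varphi(I)}[J]\cup\gamma_{\varphi(I)}[-J])$ of $\Psi_{\varphi(I)}$. Using the parametrization $\mathcal{P}_L\circ\gamma_s(t)=L/2+\frac{t}{1-t}e^{(\pi/4+s\pi/2)L}$, a direct comparison of real and imaginary parts shows that for $y>0$ each of the conditions $x+yJ\in\gamma_{\varphi(I)}[J]$ and $x-yJ\in\gamma_{\varphi(I)}[-J]$ is equivalent to $x+yI\in\gamma_{\varphi(I)}[I]$, which is excluded by $x+yI\in\Omega_\varphi$; the case $y=0$ is trivial since the rays avoid $\mathbb{R}$.

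For slice regularity, I would apply the Extension Theorem \ref{pr-ef} to $\Psi_s$ for each fixed $s\in[0,1]$, with $\mathbb{I}=(J,-J)$ and $\mathbb{U}=(U_s,U_s)$ where $U_s:=\mathbb{C}_J\setminus(\gamma_s[J]\cup\gamma_s[-J])$. This yields a slice regular extension $\widetilde{\Psi}_s$ given by precisely the formula of the proposition with $s$ in place of $\varphi(I)$. Hence on the upper half of each slice $\mathbb{C}_I$, $\Psi_\varphi$ coincides with $\widetilde{\Psi}_{\varphi(I)}$ and is $\mathbb{C}_I$-holomorphic, while on the lower half, after rewriting $x+yI=x+(-y)(-I)$, it coincides with $\widetilde{\Psi}_{\varphi(-I)}$ and is again $\mathbb{C}_I$-holomorphic. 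To glue across $\mathbb{R}$, I would use that the family $\{\Psi_s\}_{s\in[0,1]}$ coincides on the strip $\{x+yJ:|y|<1/2\}\subset\mathbb{C}_J$: no cut $\gamma_s[\pm J]$ enters this strip, and by analytic continuation from $J/2+\mathbb{R}_+$ (where all $\Psi_s$ equal $\Psi$) they agree throughout it. This forces $\widetilde{\Psi}_{\varphi(I)}$ and $\widetilde{\Psi}_{\varphi(-I)}$ to agree on a neighborhood of $\mathbb{R}$ in $\mathbb{C}_I$, producing a single $\mathbb{C}_I$-holomorphic function on $(\Omega_\varphi)_I$.

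For the restriction identity, plugging $I=J$ with $y\ge 0$ into the formula causes $(1+J^2)/2$ to vanish and leaves $(\Psi_\varphi)_J(x+yJ)=\Psi_{\varphi(J)}(x+yJ)$; plugging $I=-J$ for $y<0$ similarly gives $(\Psi_\varphi)_J(x+yJ)=\Psi_{\varphi(-J)}(x+yJ)$. Since $-J/2$ is not a branch point of $\Psi$, both $\Psi_{\varphi(J)}$ and $\Psi_{\varphi(-J)}$ continue holomorphically past their cut at $-J/2$ to $\mathbb{C}_J\setminus\gamma_{\varphi(J)}[J]$ and $\mathbb{C}_J\setminus\gamma_{\varphi(-J)}[J]$ respectively; the connected component of their common domain containing $J/2+\mathbb{R}_+$ includes the entire lower half plane of $\mathbb{C}_J$, so uniqueness of analytic continuation forces them to agree there, yielding $(\Psi_\varphi)_J=\Psi_{\varphi(J)}$ on all of $(\Omega_\varphi)_J$. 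That $\Psi_\varphi$ extends $\Psi$ is then immediate from this restriction identity evaluated on $J/2+\mathbb{R}_+$, and uniqueness follows from the Identity Principle \ref{th-ip} on the st-domain $\Omega_\varphi$, since $J/2+\mathbb{R}_+$ accumulates in $(\Omega_\varphi)_J$. The main difficulty is the slice regularity step, because on each slice $\mathbb{C}_I$ the upper and lower halves are a priori controlled by different family members $\widetilde{\Psi}_{\varphi(\pm I)}$; the saving observation is that all $\Psi_s$ coincide on a strip around $\mathbb{R}$ in $\mathbb{C}_J$, which is what lets the two local pieces be glued analytically.
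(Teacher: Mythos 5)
Your argument is correct and follows essentially the same route as the paper's (very terse) proof: check well-definedness via the agreement of the family $\Psi_s$ near $\mathbb{R}$ (the paper's Remark \ref{rm-fs} (i)), verify holomorphy of $(\Psi_\varphi)_I$ slice by slice through the extension formula of Theorem \ref{pr-ef}, and get uniqueness from the Identity Principle \ref{th-ip} on the st-domain $\Omega_\varphi$. The paper compresses all of this into ``by direct calculation''; your domain check for $x\pm yJ$ and the strip observation that glues the $\varphi(I)$- and $\varphi(-I)$-governed halves of each slice across $\mathbb{R}$ are exactly the details it leaves implicit.
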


\begin{proof}
	By direct calculation, $(\Psi_\varphi)_I$ is a holomorphic  extension of $\Psi_{\varphi(I)}|_\mathbb{R}$. And $\Psi_\varphi$ is well-defined by Remark \ref{rm-fs} (i). It is clear that $\Psi_\varphi$ is the unique slice regular extension of $\Psi$.
\end{proof}

\begin{prop}
	Formula \eqref{eq-frf} does not hold, in general, for slice regular functions defined on a non-axially symmetric domains.
\end{prop}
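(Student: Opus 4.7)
The plan is to use the explicitly constructed family $\Psi_\varphi$ above as our counterexample. First I would select a continuous function $\varphi:\mathbb{S}\to[0,1]$ and an imaginary unit $I_0\in\mathbb{S}\setminus\{\pm J\}$ such that
$$\varphi(J)>\tfrac{1}{2},\qquad \varphi(I_0)<\tfrac{1}{2},\qquad \varphi(-J)\ne\tfrac{1}{2}.$$
Such a $\varphi$ is easy to exhibit by continuous interpolation on $\mathbb{S}$. By the preceding proposition, the three points $J,-J,I_0$ all lie in the non-axially symmetric s-domain $\Omega_\varphi$, so that $\Psi_\varphi$ is a slice regular function defined at each of them.

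Next, I would test \eqref{eq-frf} at the point $q=I_0$ (i.e. $x=0$, $y=1$, $I=I_0$) with $K=-J$. Using $J^2=-1$, a direct manipulation gives $(I_0-K)(J-K)^{-1}=\tfrac{1}{2}(1-I_0J)$ and $(I_0-J)(K-J)^{-1}=\tfrac{1}{2}(1+I_0J)$. The defining formula \eqref{eq-fp} yields
$$\Psi_\varphi(I_0)=\tfrac{1}{2}(1-I_0J)\,\Psi_{\varphi(I_0)}(J)+\tfrac{1}{2}(1+I_0J)\,\Psi_{\varphi(I_0)}(-J),$$
together with $\Psi_\varphi(J)=\Psi_{\varphi(J)}(J)$ and $\Psi_\varphi(-J)=\Psi_{\varphi(-J)}(-J)$ (the latter is obtained from \eqref{eq-fp} with $I=-J$, for which the first coefficient vanishes). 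Hence the right-hand side of \eqref{eq-frf} equals $\tfrac{1}{2}(1-I_0J)\Psi_{\varphi(J)}(J)+\tfrac{1}{2}(1+I_0J)\Psi_{\varphi(-J)}(-J)$.

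Finally, I would subtract the two expressions and invoke Remark \ref{rm-fs}(ii). The $(-J)$-contributions cancel because $\Psi_s(-J)=\sqrt{3}\,e^{-J\pi/4}$ does not depend on $s$, while our sign choices give $\Psi_{\varphi(I_0)}(J)=-e^{J\pi/4}$ and $\Psi_{\varphi(J)}(J)=e^{J\pi/4}$. The discrepancy therefore simplifies to
$$\Psi_\varphi(I_0)-\text{RHS of \eqref{eq-frf}}=-(1-I_0J)\,e^{J\pi/4},$$
which is non-zero since $I_0\neq -J$ forces $I_0J\neq 1$. This contradicts \eqref{eq-frf} and completes the counterexample. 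The only real conceptual obstacle is to recognise that the sign jump of $\Psi_s(J)$ as $s$ crosses $\tfrac{1}{2}$, recorded in Remark \ref{rm-fs}(ii), is exactly what the non-constant $\varphi$ exploits; once this asymmetry is isolated, the rest of the proof is a short algebraic check.
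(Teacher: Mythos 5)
Your proposal is correct and is essentially the paper's own argument: both test the formula at a point $I\in\mathbb{S}\setminus\{\pm J\}$ against the pair $(J,-J)$, use that $\Psi_s(-J)$ is independent of $s$ while $\Psi_s(J)$ flips sign as $s$ crosses $\tfrac12$ (Remark \ref{rm-fs}(ii)), and reduce the discrepancy to a nonzero multiple of $1-IJ$. The only difference is that the paper fixes the concrete choice $\varphi(K)=\tfrac12|K-J|$ (so $\varphi(J)=0<\tfrac12<\varphi(I)$), whereas you take a generic continuous $\varphi$ with the inequalities reversed; the mechanism is identical.
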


\begin{proof}
To show that the statement does not hold in general, we provide a counterexample. Let us recall that $J\in\mathbb{S}$ is fixed in \eqref{eq-fp647} and in this proof.
Let $\varphi(K)=\frac{1}{2}|K-J|$, then (using the above notations)
	\begin{equation*}
		\Psi_\varphi(I)\neq\frac{1-IJ}{2}\Psi_\varphi(J)+\frac{1+IJ}{2}\Psi_\varphi(-J),
	\end{equation*}
	for each $I\in\mathbb{S}$ with $\frac{1}{2}<\varphi(I)<1$.
In fact, since $\varphi(-J)=1$, we have $I\neq -J$ and $(1-IJ)\neq 0$. By Remark \ref{rm-fs} (ii),
	\begin{equation*}
		\Psi_\varphi(J)=-\Psi_{\varphi(I)}(J)\neq 0\qquad\mbox{and}\qquad\Psi_\varphi(-J)=\Psi_{\varphi(I)}(-J).
	\end{equation*}
	From \eqref{eq-fp} we obtain that
	\begin{equation*}
		\Psi_\varphi(I)-\left[\frac{1-IJ}{2}\Psi_\varphi(J)+\frac{1+IJ}{2}\Psi_\varphi(-J)\right]=(1-IJ)\Psi_{\varphi(I)}(J)\neq 0.
	\end{equation*}
\end{proof}

\begin{defn}
	Let $\Omega\subset\mathbb{H}$. A function $f:\Omega\rightarrow\mathbb{H}$ is called slice-Euclidean continuous if for any $U\in\tau(\mathbb H)$, the preimage $f^{-1}(U)$ is slice-open. In other words, $f:(\Omega, \tau_s(\mathbb H))\rightarrow(\mathbb{H, \tau(\mathbb H)})$ is continuous.
\end{defn}

\begin{prop}\label{prop-sec}
	Let $\Omega\subset\mathbb{H}$. A function $f:\Omega\rightarrow\mathbb{H}$ is slice-Euclidean continuous if and only if for any $I\in\mathbb{S}$, $f_I$ is continuous.
\end{prop}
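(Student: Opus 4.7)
The plan is to reduce the statement to the tautological observation that, for any subset $U\subset\mathbb{H}$ and any $I\in\mathbb{S}$, one has the identity
\begin{equation*}
f^{-1}(U)\cap\mathbb{C}_I = (f_I)^{-1}(U),
\end{equation*}
and then to combine this with the definition of the slice topology on $\Omega$ (seen as a subspace of $(\mathbb{H},\tau_s)$), namely that $W\subset\Omega$ is slice-open if and only if $W\cap\mathbb{C}_I$ is open in $\Omega_I$ (equipped with the Euclidean topology inherited from $\mathbb{C}_I$) for every $I\in\mathbb{S}$. This viewpoint is precisely the final-topology characterization mentioned in the proof of Lemma \ref{lm-sl123}.

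For the forward direction, I would assume that $f$ is slice-Euclidean continuous, fix $I\in\mathbb{S}$ and a Euclidean open set $U\subset\mathbb{H}$, and observe that $f^{-1}(U)$ is slice-open by hypothesis, so its $I$-slice $f^{-1}(U)\cap\mathbb{C}_I$ is open in $\Omega_I$. By the displayed identity, this is exactly $(f_I)^{-1}(U)$, proving continuity of $f_I:\Omega_I\to\mathbb{H}$.

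For the converse, suppose every $f_I$ is continuous and let $U\subset\mathbb{H}$ be Euclidean open. Then for each $I\in\mathbb{S}$ the set $(f_I)^{-1}(U)$ is open in $\Omega_I$, and again by the displayed identity this equals $f^{-1}(U)\cap\mathbb{C}_I$. Since each slice is open in $\Omega_I$, the set $f^{-1}(U)$ is slice-open in $\Omega$, so $f$ is slice-Euclidean continuous.

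There is really no main obstacle here: both implications follow immediately from the elementary slice-by-slice nature of the slice topology and the fact that preimages commute with restrictions. The only care needed is to make sure that the slice topology on $\Omega$ is understood as the subspace slice topology, and that continuity of $f_I$ is taken with respect to the Euclidean topology on $\Omega_I\subset\mathbb{C}_I$ (which coincides with the slice topology on $\mathbb{C}_I$ by the remark made just after Lemma \ref{lm-sl123}).
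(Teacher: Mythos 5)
Your proposal is correct and follows essentially the same route as the paper: both arguments rest on the slice-by-slice identity $f^{-1}(U)\cap\mathbb{C}_I=(f_I)^{-1}(U)$ (equivalently, $f^{-1}(U)=\bigcup_{I\in\mathbb{S}}(f_I)^{-1}(U)$) together with the definition of slice-open sets. Your extra care about the subspace topologies on $\Omega$ and $\Omega_I$ is a harmless refinement of what the paper does implicitly.
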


\begin{proof}
	Let  $I\in\mathbb{S}$ and $U\in\tau(\mathbb H)$. If  $f_I$ is continuous, then $(f_I)^{-1}(U)$ is open in $\mathbb{C}_I$.
Hence
	\begin{equation*}
	f^{-1}(U)=\bigcup_{I\in\mathbb{S}}(f_I)^{-1}(U)
	\end{equation*}
	is slice-open.

Conversely,   if $f:\Omega\rightarrow\mathbb{H}$ is slice-Euclidean continuous, then for any  $U\in\tau(\mathbb H)$ $f^{-1}(U)\in \tau_s(\mathbb H)$. This means
$(f_I)^{-1}(U)$ is open in $\mathbb C_I$ for any $I\in\mathbb S$.
Therefore, $f_I$ is continuous.
\end{proof}

\begin{prop}\label{pr-sec}
	Every slice regular function is slice-Euclidean continuous.
\end{prop}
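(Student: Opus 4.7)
The plan is to reduce the statement to Proposition \ref{prop-sec}, which characterizes slice-Euclidean continuity as continuity of each slice restriction $f_I$. So the only thing to verify is that, given a slice regular function $f:\Omega\rightarrow\mathbb{H}$ on a slice-open set $\Omega$, the restriction $f_I:\Omega_I\rightarrow\mathbb{H}$ is continuous in the usual Euclidean sense for every $I\in\mathbb{S}$.

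First I would unwind the definitions. By Definition \ref{df-sd} (extended to slice-open sets in the definition immediately following Definition \ref{df-so}), slice regularity of $f$ means that for every $I\in\mathbb{S}$ the function $f_I$ is left $\mathbb{C}_I$-holomorphic. But the very definition of left $\mathbb{C}_I$-holomorphy requires $f_I$ to possess continuous partial derivatives on $\Omega_I$, which a fortiori makes $f_I$ continuous as a map from $\Omega_I\subset\mathbb{C}_I$ to $\mathbb{H}$ with respect to the Euclidean topologies. Alternatively, one can invoke the Splitting Lemma \ref{lm-sl}: pick $J\in\mathbb{S}$ with $J\bot I$ and write $f_I=F+GJ$ on $\Omega_I$ with $F,G:\Omega_I\rightarrow\mathbb{C}_I$ holomorphic in the classical sense, hence continuous, so $f_I$ is continuous as well.

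Once the continuity of every $f_I$ is established, Proposition \ref{prop-sec} immediately yields slice-Euclidean continuity of $f$, concluding the proof.

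There is essentially no obstacle here; the result is a formal consequence of the definitions combined with Proposition \ref{prop-sec}. The only small point worth making explicitly is that the Euclidean topology on each $\mathbb{C}_I$ coincides with the subspace topology induced by $\tau_s(\mathbb{H})$ on $\mathbb{C}_I$, so that ``continuity of $f_I$'' means the same thing whether one thinks of $\Omega_I$ as sitting inside $(\mathbb{H},\tau_s)$ or inside $(\mathbb{C}_I,\tau)$.
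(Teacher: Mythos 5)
Your proof is correct and follows exactly the paper's route: the paper's own proof is the one-line reduction to Proposition \ref{prop-sec}, leaving implicit the observation (which you spell out) that slice regularity makes each $f_I$ holomorphic, hence continuous. No issues.
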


\begin{proof}
	This follows directly from  Proposition \ref{prop-sec}.
\end{proof}

\begin{prop}\label{pr-sre} Let $\Psi_\varphi$ be as in \eqref{eq-fp}. For any continuous function $\varphi:\mathbb{S}\rightarrow[0,1]$,
	there is a unique slice regular extension $\widetilde{\Psi_\varphi}$ of $\Psi_\varphi$ on
	\begin{equation*}
	\widetilde{\Omega_\varphi}:=\Omega_\varphi\bigcup\gamma_\varphi[-J].
	\end{equation*}
		Moreover, $\widetilde{\Psi_\varphi}$ cannot be extended slice-Euclidean continuously to any point in $(\mathbb{H}\backslash\widetilde{\Omega_\varphi})\cup{(\mathbb{H}\backslash\frac{\mathbb{S}}{2})}$.
\end{prop}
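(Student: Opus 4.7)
The plan is to show in sequence that $\widetilde{\Omega_\varphi}$ is an st-domain, construct $\widetilde{\Psi_\varphi}$ by exploiting the fact that $-J/2$ is not a branch point of the scalar function $\sqrt{2z-J}$, verify slice regularity, apply the Identity Principle for uniqueness, and finally perform a jump computation to establish the non-extendability statement.

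First I would check that $\widetilde{\Omega_\varphi}$ is slice-open, and in fact an st-domain. For $I\neq\pm J$ the added set $\gamma_\varphi[-J]\subset\mathbb{C}_J$ is disjoint from $\mathbb{C}_I$, so $(\widetilde{\Omega_\varphi})_I=(\Omega_\varphi)_I$; in the remaining case $(\widetilde{\Omega_\varphi})_J=\mathbb{C}_J\setminus\gamma_\varphi[J]$, which is open since $\gamma_\varphi[J]$ is closed. Each slice $(\widetilde{\Omega_\varphi})_I$ is a connected domain and $\mathbb{R}\subset\widetilde{\Omega_\varphi}$, so Corollary~\ref{co-dsd} yields that $\widetilde{\Omega_\varphi}$ is an st-domain.

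Next, the crucial observation is that the scalar function $\sqrt{2z-J}$ on $\mathbb{C}_J$ has its only branch point at $z=J/2$, so the extra cut along $\gamma_s[-J]$ in the definition of $\Psi_s$ is artificial: $\Psi_s$ admits a unique further holomorphic extension $\overline{\Psi_s}$ to $\mathbb{C}_J\setminus\gamma_s[J]$. I would then set
\begin{equation*}
\widetilde{\Psi_\varphi}(q):=\begin{cases}\Psi_\varphi(q),&q\in\Omega_\varphi,\\ \overline{\Psi_{\varphi(-J)}}(q),&q\in\gamma_\varphi[-J],\end{cases}
\end{equation*}
which is consistent because $\Psi_\varphi$ restricted to $\mathbb{C}_J^-$ equals $\Psi_{\varphi(-J)}|_{\mathbb{C}_J^-}$. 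Slice regularity is immediate on $\mathbb{C}_I$ for $I\neq\pm J$; on $\mathbb{C}_J\setminus\gamma_\varphi[J]$ the function coincides with $\Psi_{\varphi(J)}$ on $\mathbb{C}_J^+$ and with $\overline{\Psi_{\varphi(-J)}}$ on $\mathbb{C}_J^-$, two pieces that agree on $\mathbb{R}$ by Remark~\ref{rm-fs}(i) and hence glue to a holomorphic function by a Morera argument across $\mathbb{R}$. Uniqueness of the extension follows from the Identity Principle (Theorem~\ref{th-ip}) on the st-domain $\widetilde{\Omega_\varphi}$.

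For the non-extendability claim, which I interpret as forbidding continuous extension to any point of $\mathbb{H}\setminus(\widetilde{\Omega_\varphi}\cup\frac{\mathbb{S}}{2})$, I pick $p=x_0+y_0 I\in\gamma_\varphi[I]\setminus\{I/2\}$ with $I\neq-J$ and use Proposition~\ref{prop-sec} to reduce to showing $(\widetilde{\Psi_\varphi})_I$ has no continuous limit at $p$. Approaching $p$ from each of the two sides of $\gamma_\varphi[I]$ inside $\mathbb{C}_I^+$, the defining formula for $\Psi_\varphi$ gives
\begin{equation*}
\widetilde{\Psi_\varphi}(x+yI)=\tfrac{1-IJ}{2}\Psi_{\varphi(I)}(x+yJ)+\tfrac{1+IJ}{2}\Psi_{\varphi(I)}(x-yJ).
\end{equation*}
By Remark~\ref{rm-fs}(iii) the first summand acquires a sign flip across $\gamma_{\varphi(I)}[J]$ at $x_0+y_0 J$, while the second summand has a single two-sided limit because $\Psi_{\varphi(I)}$ extends holomorphically across $\gamma_{\varphi(I)}[-J]$ (again since $-J/2$ is not a branch point). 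The net jump of $\widetilde{\Psi_\varphi}(x+yI)$ is therefore $(1-IJ)\sqrt{\lambda}e^{\alpha J/2}$, nonzero because $I\neq-J$ forces $IJ\neq 1$. The case $I=J$ is simpler: $\frac{1+IJ}{2}=0$ and Remark~\ref{rm-fs}(iii) applies directly. The main obstacle is this limit computation, in particular justifying that the second summand contributes no cancelling jump, which relies decisively on the asymmetry between the genuine branch point $J/2$ and the removable cut at $-J/2$.
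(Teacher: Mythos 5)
Your proposal is correct and follows essentially the same route as the paper: extend across $\gamma_\varphi[-J]$ using the fact that $-J/2$ is not a branch point of $\sqrt{2z-J}$, get uniqueness from the Identity Principle, and detect the boundary jump via Remark \ref{rm-fs}(iii) together with Proposition \ref{prop-sec}, noting that only the $\Psi_{\varphi(I)}(x+yJ)$ summand of \eqref{eq-fp} jumps so the net discontinuity is $(1-IJ)\sqrt{\lambda}e^{\alpha J/2}\neq 0$ for $I\neq -J$. You supply some details the paper leaves implicit (the verification that $\widetilde{\Omega_\varphi}$ is an st-domain and the explicit gluing of $\Psi_{\varphi(J)}$ with $\overline{\Psi_{\varphi(-J)}}$ across $\mathbb{R}$), but the underlying argument is the same.
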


\begin{proof}
	Note that $\Psi_\varphi(q)=\sqrt{2q-J}$ for any $q\in\frac{J}{2}+\mathbb{R}^+$. Form complex analysis, we know  that $\Psi_\varphi$ can be extended slice regularly  to $\gamma_\varphi[-J]$. This extension,  denoted by  $\widetilde{\Psi_\varphi}$,    is unique by Identity Principle \ref{th-ip}.  For any $\lambda\in\mathbb{R}^+$, we have
	\begin{equation}\label{eq-jbt}
	\lim_{\theta\rightarrow\beta}\Psi_\varphi(\frac{-J}{2}+\lambda e^{-J\theta})=\widetilde {\Psi_\varphi}(\frac{-J}{2}+\lambda e^{-J\beta})
	\end{equation}
	where $$\beta:=\frac{\pi}{2}+\frac{\varphi(-J)\pi}{2}.$$
	
	For any $I\in\mathbb{S}\backslash\{-J\}$, denote $$\alpha:=\frac{\pi}{2}+\frac{\varphi(I)\pi}{2}.$$  It follows from \eqref{eq-fp} and \eqref{eq-jbt} that for any $\lambda\in\mathbb{R}_+$
	\begin{equation*}
	\lim_{\theta\rightarrow\alpha-} \widetilde{\Psi_\varphi}(\frac{I}{2}+\lambda e^{I\theta})=\frac{1-IJ}{2}\lim_{\theta\rightarrow\alpha-}\Psi_{\varphi(I)}(\frac{J}{2}+\lambda e^{J\theta})+\frac{1+IJ}{2}\widetilde{\Psi_\varphi}(\frac{-J}{2}+\lambda e^{-J\alpha})
	\end{equation*}
	and
	\begin{equation*}
	\lim_{\theta\rightarrow\alpha+} \widetilde{\Psi_\varphi}(\frac{I}{2}+\lambda e^{I\theta})=\frac{1-IJ}{2}\lim_{\theta\rightarrow\alpha+}\Psi_{\varphi(I)}(\frac{J}{2}+\lambda e^{J\theta})+\frac{1+IJ}{2}\widetilde{\Psi_\varphi}(\frac{-J}{2}+\lambda e^{-J\alpha}).
	\end{equation*}
	By Remark \ref{rm-fs} (iii), we find
	\begin{equation*}
	\lim_{\theta\rightarrow\alpha-} \widetilde{\Psi_\varphi}(\frac{I}{2}+\lambda e^{I\theta})\neq
	\lim_{\theta\rightarrow\alpha+} \widetilde{\Psi_\varphi}(\frac{I}{2}+\lambda e^{I\theta}).
	\end{equation*}

	And according to Proposition \ref{prop-sec}, $\widetilde{\Psi_\varphi}$ can not be extended slice-Euclidean continuously to any point in $\gamma_\varphi[I]\backslash\{\frac{I}{2}\}$. Since
	\begin{equation*}
	\bigcup_{I\in\mathbb{S}\backslash\{-J\}}\left(\gamma_\varphi[I]\backslash\{\frac{I}{2}\}\right)=(\mathbb{H}\backslash\widetilde{\Omega_\varphi})\cup{(\mathbb{H}\backslash\frac{\mathbb{S}}{2})},
	\end{equation*}
	it follows that $\widetilde{\Psi_\varphi}$ cannot be extended slice-Euclidean continuously to any point in $(\mathbb{H}\backslash\widetilde{\Omega_\varphi})\cup{(\mathbb{H}\backslash\frac{\mathbb{S}}{2})}$.
\end{proof}

\begin{prop}\label{pr-fsd}
	$\widetilde{\Psi_\varphi}$ cannot be slice regularly extended to any st-domain containing strictly $\widetilde{\Omega_\varphi}$.
\end{prop}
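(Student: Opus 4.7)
The plan is to argue by contradiction, combining the slice-Euclidean obstruction given by Proposition \ref{pr-sre} with the automatic continuity of slice regular functions from Proposition \ref{pr-sec}, and then exploiting the Euclidean-openness of each slice of an st-domain to handle a boundary issue.

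Suppose, for contradiction, that $\Omega'$ is an st-domain with $\widetilde{\Omega_\varphi}\subsetneq\Omega'$ and that $g:\Omega'\rightarrow\mathbb{H}$ is a slice regular function with $g|_{\widetilde{\Omega_\varphi}}=\widetilde{\Psi_\varphi}$. I would pick any $q_0\in\Omega'\setminus\widetilde{\Omega_\varphi}$, and use
\begin{equation*}
\mathbb{H}\setminus\widetilde{\Omega_\varphi}=\bigcup_{I\in\mathbb{S}\setminus\{-J\}}\gamma_\varphi[I]
\end{equation*}
to fix an $I_0\in\mathbb{S}\setminus\{-J\}$ with $q_0\in\gamma_\varphi[I_0]\subset\mathbb{C}_{I_0}$. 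The argument then splits according to whether or not $q_0$ is the tip $I_0/2$ of the ray $\gamma_\varphi[I_0]$.

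If $q_0\neq I_0/2$, then $q_0$ belongs to the set $\bigcup_{I\in\mathbb{S}\setminus\{-J\}}(\gamma_\varphi[I]\setminus\{I/2\})$ to which, by the proof of Proposition \ref{pr-sre}, $\widetilde{\Psi_\varphi}$ admits no slice-Euclidean continuous extension. On the other hand, Proposition \ref{pr-sec} says that the slice regular function $g$ is slice-Euclidean continuous on $\Omega'$, and in particular at $q_0$; this is the required contradiction.

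The delicate step is the tip case $q_0=I_0/2$, where Proposition \ref{pr-sre} does not apply directly because $\widetilde{\Psi_\varphi}$ does extend continuously at the tip (taking value $0$ there). The idea is to exploit the st-domain hypothesis: slice-openness forces $(\Omega')_{I_0}$ to be Euclidean-open in $\mathbb{C}_{I_0}$ and to contain a Euclidean ball $B$ around $I_0/2$; since $\gamma_\varphi[I_0]$ is the image of a continuous half-open curve emanating from $I_0/2$, the intersection $B\cap(\gamma_\varphi[I_0]\setminus\{I_0/2\})$ is nonempty, and any point $q_0'$ in it lies in $\Omega'\setminus\widetilde{\Omega_\varphi}$ with $q_0'\neq I_0/2$, reducing us to the previous case. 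The main obstacle is exactly this reduction at the tip; note that slice-connectedness of $\Omega'$ plays no role, only its slice-openness is used, so the same argument in fact shows that $\widetilde{\Psi_\varphi}$ admits no slice regular extension to any slice-open set strictly containing $\widetilde{\Omega_\varphi}$.
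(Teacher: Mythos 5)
Your proof is correct and follows exactly the route the paper intends: its proof of Proposition \ref{pr-fsd} simply declares the result a direct consequence of Propositions \ref{pr-sec} and \ref{pr-sre}, which is precisely your Case 1. Your additional reduction at the tips $I_0/2$ (using Euclidean-openness of $(\Omega')_{I_0}$ to find a non-tip point of $\gamma_\varphi[I_0]$ inside $\Omega'$) fills in a step the paper leaves implicit, since Proposition \ref{pr-sre} excludes the tips from the set where continuous extension fails; this is a worthwhile clarification rather than a different approach.
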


\begin{proof}
	This is a direct consequence of Propositions \ref{pr-sec} and \ref{pr-sre}.
\end{proof}

\begin{rmk}\label{exa-sre}
	Notice that $\Omega_\varphi$ is not axially symmetric when $\varphi$ is not constant. Moreover the only axially symetric st-open set including $\Omega_\varphi$ is $\mathbb{H}$, since
	\begin{equation*}
		\bigcup_{x+yI\in\Omega_{\varphi}} x+y\mathbb{S}=\mathbb{H}.
	\end{equation*}
	By Remark \ref{rmk-sd} and Proposition \ref{pr-fsd}, $\Psi_\varphi$ cannot be slice regularly extended to any axially symmetric s-domain in $\mathbb{H}$,  when $\varphi(K)=\frac{1}{2}|K-J|$, for $K\in\mathbb{S}$.
\end{rmk}

	We now provide an example of a slice regular function defined on a slice-open set $\Omega\in\tau_s\backslash\tau_\sigma$. The fact that this is indeed an example as required can be seen following the reasonings in this section.
	\begin{exa}
		For each $s\in(0,2]$, define
		\begin{equation*}
			W_s:=\{x+yi:x=0\mbox{ and }\frac{s}{8}\le y\le\frac{1}{2}\}\cup\{x+yi:x\le0\mbox{ and }y=\frac{s}{8}\}
		\end{equation*}
		and $W_0:=\varnothing$. Fix $I\in\mathbb{S}$. Define
		\begin{equation*}
			\Omega:=\bigcup_{J\in\mathbb{S}}\left[\mathbb{C}_J^+\backslash\mathcal{P}_J(W_{|J+I|})\right]\cup\mathbb{R}.
		\end{equation*}
		Then $\Omega\in\tau_s\backslash\tau_\sigma$. The function $\Psi$ defined in \eqref{eq-fp647} can be extended to a slice regular function $\Psi'$ on $\Omega$. And $\Psi'$ can not be extended slice regularly to any slice-open set containing strictly $\Omega$.
	\end{exa}

\section{Domains of slice regularity}\label{sc-dsr}

In this section, we consider domains of slice regularity for slice regular functions, analogous to domains of holomorphy of holomorphic functions.
It turns out that
  the $\sigma$-balls and axially symmetric slice-open sets are domains of slice regularity.

In contrast to complex analysis of one variable,
an st-domain may fail to be a domain of slice regularity.

 We also give a property of domains of slice regularity, see Proposition \ref{pr-dsr}.

\begin{defn}\label{df-dsr}
	A slice-open set $\Omega\subset\mathbb{H}$ is called a domain of slice regularity if there are no slice-open sets $\Omega_1$ and $\Omega_2$ in $\mathbb{H}$ with the following properties.
	\begin{enumerate}[label=(\roman*)]
		\item $\varnothing\neq\Omega_1\subset\Omega_2\cap\Omega$.
		\item $\Omega_2$ is slice-connected and not contained in $\Omega$.
		\item For any slice regular function $f$ on $\Omega$, there is a slice regular function $\widetilde{f}$ on $\Omega_2$ such that $f=\widetilde{f}$ in $\Omega_1$.
	\end{enumerate}
Moreover, if there are slice-open sets $\Omega,\Omega_1,\Omega_2$ satisfying (i)-(iii), then we call $(\Omega,\Omega_1,\Omega_2)$ a slice-triple.
\end{defn}

{
In a similar way, we give the following:
\begin{defn}
	Let $\Omega$ be a slice-open set, $I\in\mathbb{S}$ and $U_1$, $U_2$ be open sets in $\mathbb{C}_I$. $(\Omega,U_1,U_2)$ is called an $I$-triple if
	\begin{enumerate}[label=(\roman*)]
		\item $\varnothing\neq U_1\subset U_2\cap\Omega_I$.
		\item $U_2$ is connected in $\mathbb{C}_I$ and not contained in $\Omega_I$.
		\item For any slice regular function $f$ on $\Omega$, there is a holomorphic function $\widetilde{f}:U_2\rightarrow\mathbb{H}$ such that $f=\widetilde{f}$ in $U_1$.
	\end{enumerate}
\end{defn}

\begin{lem}\label{pr-ubs}
	Let $U$ be a slice-open set and $\Omega$ be an st-domain with $U\subsetneq \Omega$. Then $\Omega\cap\partial_I U_I\neq\varnothing$ for some $I\in\mathbb{S}$, where $\partial_I U_I$ is the boundary of $U_I$ in $\mathbb{C}_I$.
\end{lem}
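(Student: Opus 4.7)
The plan is to argue by contradiction, exploiting the slice-connectedness of the st-domain $\Omega$. Assume (aiming at a contradiction) that $\Omega\cap \partial_I U_I = \varnothing$ for every $I\in\mathbb{S}$. I will produce a nontrivial decomposition of $\Omega$ into two disjoint, non-empty slice-open sets, contradicting the fact that $\Omega$ is slice-connected. The natural candidate is
\begin{equation*}
\Omega = U \sqcup (\Omega\setminus U).
\end{equation*}
Here $U$ is non-empty (tacitly, otherwise the conclusion would be vacuous) and $\Omega\setminus U\neq \varnothing$ since $U\subsetneq \Omega$ strictly. Also, $U$ is slice-open by hypothesis.

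The key step is to verify that $\Omega\setminus U$ is slice-open. Fix $I\in\mathbb{S}$ and pick any $q\in (\Omega\setminus U)_I = \Omega_I\setminus U_I$. By the contradiction hypothesis, $q\in\Omega$ implies $q\notin \partial_I U_I$; combined with $q\notin U_I$, this gives $q\notin \overline{U_I}^{\mathbb{C}_I}$. Consequently, there exists $r>0$ such that the Euclidean disk $B_I(q,r)\subset \mathbb{C}_I$ is disjoint from $\overline{U_I}^{\mathbb{C}_I}$, hence from $U_I$. Since $\Omega_I$ is open in $\mathbb{C}_I$, shrinking $r$ further (if necessary) I may assume $B_I(q,r)\subset\Omega_I$. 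Then $B_I(q,r)\subset \Omega_I\setminus U_I$, which shows that $(\Omega\setminus U)_I$ is open in $\mathbb{C}_I$. Since $I$ is arbitrary, $\Omega\setminus U$ is slice-open.

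Putting everything together: $\Omega$ is the disjoint union of two non-empty slice-open sets, contradicting the slice-connectedness of the st-domain $\Omega$. Therefore the assumption must fail, and there exists $I\in\mathbb{S}$ with $\Omega\cap\partial_I U_I\neq \varnothing$. There is no real obstacle here; the only technical point is the elementary observation that the complement of the closure of $U_I$ in $\mathbb{C}_I$ coincides with $(\mathbb{C}_I\setminus U_I)\setminus \partial_I U_I$, which lets us upgrade "not in $U_I$ and not on the boundary" to "has a whole $\mathbb{C}_I$-neighborhood missing $U_I$".
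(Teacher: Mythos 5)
Your proof is correct and follows essentially the same route as the paper: assume $\Omega\cap\partial_I U_I=\varnothing$ for all $I$, show that $\Omega\setminus U$ is then slice-open (the paper does this via the set identity $\Omega_I\setminus U_I=\Omega_I\cap[\mathbb{C}_I\setminus(\partial_I U_I\cup U_I)]$ rather than a pointwise disk argument, but the content is identical), and contradict the slice-connectedness of $\Omega$ via the decomposition $\Omega=U\sqcup(\Omega\setminus U)$. Your remark that $U\neq\varnothing$ must be assumed tacitly matches the paper, which likewise uses nonemptiness without comment (it is guaranteed in the lemma's application).
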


\begin{proof}
	Suppose that $\Omega\cap\partial_I U_I=\varnothing$ for each $I\in\mathbb{S}$. Since $\Omega_I$ and $\mathbb{C}_I\backslash(\partial_I U_I\cup U_I)$ are open in $\mathbb{C}_I$, so is
	\begin{equation*}
		\begin{split}
			[\Omega\cap(\mathbb{H}\backslash U)]_I
			&=\Omega_I\cap(\mathbb{C}_I\backslash U_I)
			=\Omega_I\backslash(\Omega_I\cap U_I)
			\\&=\Omega_I\backslash[\Omega_I\cap (\partial_IU_I\cup U_I)]
			=\Omega_I\cap[\mathbb{C}_I\backslash(\partial_I U_I\cup U_I)].
		\end{split}
	\end{equation*}
	By definition, $\Omega\cap(\mathbb{H}\backslash U)$ is slice-open. Hence $\Omega$ is the disjoint union of the nonempty slice-open sets $\Omega\cap(\mathbb{H}\backslash U)$ and $\Omega\cap U$. It implies that $\Omega$ is not slice-connected, a contradiction.
\end{proof}

\begin{prop}\label{pr-so}
	A slice-open set $\Omega\subset\mathbb{H}$ is a domain of slice regularity if and only if for any $I\in\mathbb{S}$ there are no open sets $U_1$ and $U_2$ in $\mathbb{C}_I$ such that $(\Omega,U_1,U_2)$ is an $I$-triple.
\end{prop}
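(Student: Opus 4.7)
The plan is to prove both contrapositive implications by explicitly constructing one kind of triple from the other, using the Extension Theorem~\ref{pr-ef}, its Corollary~\ref{ex-sb}, the Identity Principle~\ref{th-ip}, and Lemma~\ref{pr-ubs}.

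For the direction ``slice-triple $\Rightarrow$ $I$-triple'', suppose $(\Omega,\Omega_1,\Omega_2)$ is a slice-triple. Since $\tau_s$ is locally path-connected, slice-connected components of slice-open sets are slice-open, so I may first replace $\Omega_1$ by one of its slice-connected components. Let $W_0$ be the slice-connected component of $\Omega\cap\Omega_2$ containing $\Omega_1$; then $W_0$ is an st-domain and $W_0\subsetneq\Omega_2$ because $\Omega_2\not\subset\Omega$. Lemma~\ref{pr-ubs} furnishes some $I\in\mathbb{S}$ and $p\in\Omega_2\cap\partial_I(W_0)_I$. The crucial step is to show $p\notin\Omega_I$: if $p\in\Omega_I$, then a small disk $B_I(p,r)\subset(\Omega_2)_I\cap\Omega_I$ is slice-connected and lies in a single slice-connected component of $\Omega\cap\Omega_2$; but $p\in\partial_I(W_0)_I$ forces $B_I(p,r)$ to meet $(W_0)_I$, so this component is $W_0$, yielding $p\in W_0$ and contradicting $p\notin(W_0)_I$. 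With $p\notin\Omega_I$, I set $U_2:=B_I(p,r)$ (for $r$ small enough that $B_I(p,r)\subset(\Omega_2)_I$) and $U_1:=B_I(p,r)\cap(W_0)_I\neq\varnothing$. For any slice regular $f$ on $\Omega$, the slice regular extension $\widetilde{f}$ on $\Omega_2$ restricts to a holomorphic map on the slice $U_2$, and Theorem~\ref{th-ip} applied to $f$ and $\widetilde{f}$ on the st-domain $W_0$ (where they agree on the nonempty slice-open $\Omega_1$) gives $f=\widetilde{f}$ on $W_0$, hence on $U_1$.

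For the direction ``$I$-triple $\Rightarrow$ slice-triple'', let $(\Omega,U_1,U_2)$ be an $I$-triple; I may assume $U_1$ is connected in $\mathbb{C}_I$. Let $V$ be the connected component of $U_2\cap\Omega_I$ containing $U_1$. Since $V\subsetneq U_2$ and $U_2$ is connected, a classical clopen argument gives $q\in U_2\cap\partial_I V$; the same ``wrong-component'' reasoning as above (with $\Omega_I$ in place of $\Omega\cap\Omega_2$) shows $q\notin\Omega_I$. Choosing $r>0$ so small that $B_I(q,r)\subset U_2$, I set $\Omega_2:=\Sigma(q,r)$, which is a slice-open st-domain not contained in $\Omega$ since $q\in\Sigma(q,r)\setminus\Omega$, and I take $\Omega_1$ to be a Euclidean ball $B_I(p_0,r_0)\subset V\cap B_I(q,r)$ with $B_I(p_0,r_0)\cap\mathbb{R}=\varnothing$; such a ball is automatically slice-open in $\mathbb{H}$. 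For any slice regular $f$ on $\Omega$, the $I$-triple supplies a holomorphic $\widetilde{f}:U_2\to\mathbb{H}$, and Corollary~\ref{ex-sb} extends $\widetilde{f}|_{B_I(q,r)}$ uniquely to a slice regular $\widetilde{f}^{\Omega_2}$ on $\Sigma(q,r)=\Omega_2$. The holomorphic identity principle on the connected set $V\subset\mathbb{C}_I$ (where $f|_V$ and $\widetilde{f}|_V$ are both holomorphic and agree on $U_1$) yields $f=\widetilde{f}$ on $V$, hence $f=\widetilde{f}^{\Omega_2}$ on $\Omega_1\subset V\cap B_I(q,r)$.

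The main obstacle in both directions is to upgrade a boundary point of a chosen component to a point that lies genuinely outside the ambient open set (the set $\Omega$ in one direction, the set $\Omega_I$ in the other), rather than merely in a different component. This is handled uniformly by the ``wrong-component'' argument above, exploiting openness together with the maximality of slice- or $\mathbb{C}_I$-connected components. The secondary technical issue, ensuring the auxiliary ball $\Omega_1$ is slice-open in $\mathbb{H}$, is secured by requiring the ball to miss the real axis.
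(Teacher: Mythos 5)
Your argument is correct, and its overall architecture is the same as the paper's: both contrapositive implications are proved by explicitly manufacturing one kind of triple from the other, using Lemma~\ref{pr-ubs}, the Identity Principle~\ref{th-ip}, and the extension machinery of Section~\ref{sc-ef}; your ``wrong-component'' step is exactly the content of the paper's claim that a slice-connected component $U$ of $\Omega\cap\Omega_2$ has each slice $U_I$ equal to a union of connected components of $(\Omega\cap\Omega_2)_I$, whence $(\Omega_2)_I\cap\partial_I U_I\subset\partial_I\Omega_I$.

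The one place where you genuinely deviate is the direction ``$I$-triple $\Rightarrow$ slice-triple''. The paper does not hunt for a boundary point at all: it picks a small domain $V\subset U_1$ with $V\cap\mathbb{R}=\varnothing$, applies Theorem~\ref{pr-ef} with $\mathbb{I}=(J,-J)$ and $\mathbb{U}=(U_2,U_2)$ to promote the holomorphic extension on $U_2$ to a slice regular function on the slice-open set $\mathbb{U}^{+\Delta}_{s,\mathbb{I}}\supset U_2$, and takes $\Omega_2$ to be the slice-connected component containing $V$ (which contains all of $U_2$ because $U_2$ is connected, hence escapes $\Omega$). You instead first extract a point $q\in U_2\cap\partial_I V$ with $q\notin\Omega_I$ and extend only over the $\sigma$-ball $\Sigma(q,r)$ via Corollary~\ref{ex-sb} -- essentially the device the paper reserves for Proposition~\ref{pr-dsr}. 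Your route costs an extra clopen/boundary argument but buys a ready-made slice-connected $\Omega_2$ (a $\sigma$-ball) and a very local verification of condition (iii); the paper's route is shorter but leans harder on the structure of $\mathbb{U}^{+\Delta}_s$. Both are sound, and your care in forcing $\Omega_1$ to miss $\mathbb{R}$ so that it is slice-open in $\mathbb{H}$ mirrors the paper's choice of $V$ with $V\cap\mathbb{R}=\varnothing$.
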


\begin{proof} Let $\Omega$ be a domain of slice regularity and let us suppose, by absurd, that there exists an $I$-triple  $(\Omega,U_1,U_2)$ for $I\in\mathbb{S}$.

Let $V\subset U_1$ be a nonempty domain in $\mathbb{C}_I$ such that $V\cap\mathbb{R}=\varnothing$, and choose $J\in\{\pm I\}$ such that $V\subset\mathbb{C}_{J}^+$. It is clear that $V$ is an st-domain and, by definition, $(\Omega,V,U_2)$ is a $J$-triple.
	Let $f:\Omega\rightarrow\mathbb{H}$ be a slice regular function. By Theorem \ref{pr-ef}, where we take
$\mathbb{I}:=(J,-J)$, and $\mathbb{U}:=(U_2,U_2)$, we deduce that $f|_V$ can be extended to a slice regular function $\widetilde f$ on a slice-open set $\mathbb{U}^{+\Delta}_{s,\mathbb{I}}\supset U_2$. Let $\widetilde{V}$ be the slice-connected component of $\mathbb{U}^{+\Delta}_{s,\mathbb{I}}$ containing $V$. Since $U_2\supset V$ is connected in $\mathbb{C}_{J}$, we have $\widetilde{V}\supset U_2$. Since $U_2\nsubseteq\Omega_{J}$, we have $\widetilde{V}\nsubseteq\Omega_J$, and then $\widetilde{V}\nsubseteq\Omega$. Thus we have
	that $\varnothing\neq V\subset\widetilde{V}\cap\Omega$, $\widetilde{V}$ is slice-connected and not contained in $\Omega$. Moreover, for any slice regular function $f$ on $\Omega$, there is a slice regular function $\widetilde f|_{\widetilde V}$ on $\widetilde{V}$ such that $f=\widetilde f$ on $V$.
	
We conclude that $(\Omega,V,\widetilde{V})$ is a slice-triple, and $\Omega$ is not a domain of slice regularity, which is a contradiction.

Now we prove the converse, i.e. a slice-open set $\Omega$ is a domain of slice regularity if for each $I\in\mathbb{S}$ there are no open sets $U_1$ and $U_2$ in $\mathbb{C}_I$ such that $(\Omega,U_1,U_2)$ is an $I$-triple.
	So we suppose that $\Omega$ is not a domain of slice regularity. Then there are slice-open sets $\Omega_1,\Omega_2$ such that $(\Omega,\Omega_1,\Omega_2)$ is a slice-triple. Let $U$ be a slice-connected component of $\Omega\cap\Omega_2$ with $U\cap\Omega_1\neq\varnothing$. By the Identity Principle \ref{th-ip} we have that $(\Omega,U,\Omega_2)$ is also a slice-triple.
	
We claim that for any $I\in\mathbb{S}$, $U_I$ is a union of some connected components of $\Omega_I\cap(\Omega_2)_I$ in $\mathbb{C}_I$. (This follows from the general fact that if $\Sigma$ be a slice-open set and $U$ be a slice-connected component of $\Sigma$. Then for each $I\in\mathbb{S}$, $U_I$ is a union of some connected components of $\Sigma_I$).  Then for any $I\in\mathbb{S}$,
	\begin{equation*}
		\partial_I U_I\subset\partial_I((\Omega_2)_I\cap\Omega_I)\subset\partial_I((\Omega_2)_I)\cup\partial_I(\Omega_I).
	\end{equation*}
	Since $(\Omega_2)_I\cap\partial_I((\Omega_2)_I)=\varnothing$,
	\begin{equation}\label{eq-o2}
		(\Omega_2)_I\cap\partial_I U_I\subset\partial_I\Omega_I.
	\end{equation}

	By Lemma \ref{pr-ubs}, $(\Omega_2)_J\cap\partial_J U_J\neq\varnothing$ for some $J\in\mathbb{S}$. Let $p\in(\Omega_2)_J\cap\partial_J U_J$. By \eqref{eq-o2}, $p\in\partial_J\Omega_J$, and then $p\notin \Omega_J$. Let $\Omega_3$ be the connected component of $(\Omega_2)_J$ containing $p$ in $\mathbb{C}_J$, and $U':=U_J\cap\Omega_3$. Since $p\in\partial_J U_J$ and $p$ is an interior point of $\Omega_3$ in $\mathbb{C}_J$, we have $p\in\partial_J U'$ and then $U'\neq\varnothing$. Hence
	
	(i) $\varnothing\neq U'\subset \Omega_3\cap\Omega_J$.
	
	(ii) $\Omega_3$ is connected in $\mathbb{C}_J$ and not contained in $\Omega_J$ (by $p\in\Omega_3$ and $p\notin\Omega_J$).
	
	(iii) Since $(\Omega,\Omega_1,\Omega_2)$ is a slice-triple, for any slice regular function $f$ on $\Omega$, there is a slice regular function $f':\Omega_2\rightarrow\mathbb{H}$ such that $f=f'$ in $\Omega_1$. Since $\Omega_3\subset\Omega_2$ and $U'\subset U\subset\Omega_1$, we have $f'|_{\Omega_3}$ is a  holomorphic function such that $f=f'|_{\Omega_3}$ on $U'$.
	It implies that $(\Omega,U',\Omega_3)$ is a $J$-triple, a contradiction and the assertion follows.

\end{proof}}

\begin{prop}\label{pr-aa}
	Any axially symmetric {slice-open set} is a domain of slice regularity.
\end{prop}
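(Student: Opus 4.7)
The plan is to apply Proposition~\ref{pr-so} and argue by contradiction. Suppose $\Omega$ is an axially symmetric slice-open set that fails to be a domain of slice regularity. Then there exist $I \in \mathbb{S}$ and open sets $U_1, U_2 \subset \mathbb{C}_I$ forming an $I$-triple $(\Omega, U_1, U_2)$. The goal is to exhibit a single slice regular function $f$ on $\Omega$ whose restriction $f_I$ cannot extend holomorphically across some point of $U_2 \setminus \Omega_I$, contradicting condition (iii) of the $I$-triple.

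First I would locate an obstructing boundary point. Choose a connected component $V'$ of $U_2 \cap \Omega_I$ that meets $U_1$; such a $V'$ exists because $\varnothing \neq U_1 \subset U_2 \cap \Omega_I$. Since $\mathbb{C}_I$ is locally connected, $V'$ is open, and as a component of $U_2 \cap \Omega_I$ it is also closed in $U_2 \cap \Omega_I$. However $V'$ cannot be closed in $U_2$: otherwise connectedness of $U_2$ with $V' \neq \varnothing$ would force $V' = U_2$, contradicting $U_2 \not\subset \Omega_I$. Hence there exists $p \in (\overline{V'} \cap U_2) \setminus V'$, and a short argument using local connectedness in $\mathbb{C}_I$ (if $p$ were in $\Omega_I \cap U_2$, its connected component would intersect, hence equal, $V'$) shows $p \notin \Omega_I$. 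So $p \in U_2 \setminus \Omega_I$ with points of $V'$ approaching $p$.

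Next I would construct $f$. If $p \in \mathbb{R}$, then $p \notin \Omega$ (since $\Omega \cap \mathbb{R} \subset \Omega_I$), and I take $f(q) := (q-p)^{-1}$, which is slice regular on $\mathbb{H} \setminus \{p\} \supset \Omega$. If $p = x + yI$ with $y \neq 0$, then $p \notin \Omega_I$ and so $p \notin \Omega$; by axial symmetry of $\Omega$ the entire sphere $x + y\mathbb{S}$ is disjoint from $\Omega$, so I take the slice-preserving rational function $f(q) := ((q-x)^2 + y^2)^{-1}$, which is slice regular on $\mathbb{H} \setminus (x + y\mathbb{S}) \supset \Omega$. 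In both cases $f_I$ has a genuine pole at $p \in \mathbb{C}_I$.

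Finally, condition (iii) supplies a holomorphic $\widetilde{f} : U_2 \to \mathbb{H}$ with $\widetilde{f} = f$ on $U_1$. Since $V'$ is a domain in $\mathbb{C}_I$ and $V' \cap U_1$ is a nonempty open subset on which $f$ and $\widetilde{f}$ agree, the Splitting Lemma together with the classical complex identity principle gives $\widetilde{f} = f$ throughout $V'$. Taking any sequence $q_n \in V'$ with $q_n \to p$, one obtains $|\widetilde{f}(q_n)| = |f(q_n)| \to \infty$, contradicting continuity of $\widetilde{f}$ at the interior point $p \in U_2$. The main obstacle is the topological extraction of the boundary point $p$ and the use of axial symmetry to guarantee that the pointwise inverse defining $f$ is slice regular on all of $\Omega$; both are handled by the reasoning above, after which the contradiction is immediate.
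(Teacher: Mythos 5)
Your proof is correct, but it takes a genuinely different route from the paper's. Both arguments start the same way, invoking Proposition~\ref{pr-so} to reduce to the non-existence of an $I$-triple $(\Omega,U_1,U_2)$; they diverge in how axial symmetry is exploited. The paper uses the Extension Theorem~\ref{pr-ef} with $\mathbb I=(I,-I)$ and $\mathbb U=(\Omega_I,\Omega_I)$ to show that \emph{every} holomorphic $f:\Omega_I\rightarrow\mathbb{C}_I$ extends slice regularly to all of $\Omega$; the $I$-triple would then force every such $f$ to continue holomorphically to $U_2\not\subset\Omega_I$, contradicting the classical fact that every open subset of $\mathbb{C}$ is a domain of holomorphy. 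You instead use axial symmetry only to guarantee that the sphere $x+y\mathbb{S}$ through your obstructing boundary point $p$ misses $\Omega$, so that the single slice-preserving rational function $\left((q-x)^2+y^2\right)^{-1}$ (or $(q-p)^{-1}$ for real $p$) is slice regular on all of $\Omega$ and blows up along $V'$ as one approaches $p\in U_2$. The paper's argument is shorter \emph{modulo} the quoted one-variable theorem, which is itself nontrivial (it requires some Weierstrass-type or accumulating-singularity construction); yours is fully self-contained and elementary, needing only one explicit rational function, at the cost of the point-set work isolating $p\in\overline{V'}\cap U_2$ with $p\notin\Omega_I$ --- and that extraction, as you carry it out via components and local connectedness, is sound. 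The identity-principle step on the domain $V'$ (via the Splitting Lemma applied componentwise) and the final continuity contradiction at $p$ are both correct.
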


{
\begin{proof}
	Suppose that an axially symmetric slice-open set $\Omega$ is not a domain of slice regularity. By Proposition \ref{pr-so}, there is an $I$-triple $(\Omega,U_1,U_2)$ for some $I\in\mathbb{S}$. Using the fact that $\Omega$ is axially symmetric, and Theorem \ref{pr-ef} where we set $\mathbb{I}=(I,-I)$, $\mathbb{U}=(\Omega_I,\Omega_I)$, we deduce that any holomorphic function $f:\Omega_I\rightarrow\mathbb{C}_I$ can be extended to a slice regular function $\widetilde f$ defined on $\Omega$. Since $(\Omega,U_1,U_2)$ is an $I$-triple, $f|_{U_1}=\widetilde f|_{U_1}$ can be extended to a holomorphic function $\breve f:U_2\rightarrow\mathbb{H}$. By Splitting Lemma \ref{lm-sl} and Identity Principle in complex analysis, $\breve f$ is $\mathbb{C}_I$-valued holomorphic function.
	Thus, for any holomorphic function $f:\Omega_I\rightarrow\mathbb{C}_I$, there is a function $\breve f:U_2\rightarrow\mathbb{C}_I$ such that $f=\breve f$ on $U_1$. We conclude that $\Omega_I$ is not a domain of holomorphy in $\mathbb{C}_I$, which is a contradiction.
\end{proof}
}

\begin{prop}
	Any $\sigma$-ball is a domain of slice regularity.
\end{prop}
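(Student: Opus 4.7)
The plan is to apply Proposition~\ref{pr-so}: it suffices to show that, for every $I\in\mathbb{S}$, no $I$-triple $(\Sigma(p,r),U_1,U_2)$ can exist. The case $p\in\mathbb{R}$ is immediate since $\Sigma(p,r)=B(p,r)$ is an axially symmetric Euclidean ball, so Proposition~\ref{pr-aa} applies.

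Assume $p=a+bI_p$ with $b>0$. A direct computation from the definition of $\sigma$-distance gives the slice structure $\Sigma(p,r)_{\pm I_p}=B_{I_p}(p,r)$ and $\Sigma(p,r)_I=B_I(a,\rho)$ for $I\neq\pm I_p$, where $\rho=\sqrt{r^2-b^2}$ (with $\Sigma(p,r)_I$ empty when $r\le b$). Every non-empty slice is thus an open Euclidean disc, hence a one-variable domain of holomorphy in its slice plane. By Corollary~\ref{ex-sb}, restriction to the slice $\mathbb{C}_{I_p}$ gives a bijection between slice regular functions on $\Sigma(p,r)$ and holomorphic functions on $B_{I_p}(p,r)$. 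I would choose a holomorphic $h_0:B_{I_p}(p,r)\to\mathbb{C}_{I_p}$ whose maximal domain is exactly $B_{I_p}(p,r)$ (by the standard Hadamard lacunary construction), and let $h$ be its slice regular extension to $\Sigma(p,r)$. Since $h_{I_p}=h_0$ cannot be continued holomorphically past $\partial B_{I_p}(p,r)$, any hypothetical $\pm I_p$-triple contradicts the one-variable identity principle in $\mathbb{C}_{I_p}$.

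For the delicate case $I\neq\pm I_p$ with $r>b$, I plan to exploit the geometric fact that the two real points $a\pm\rho$ lie on both $\partial B_{I_p}(p,r)$ and $\partial B_I(a,\rho)$. By choosing $h_0$ singular at $a\pm\rho$, the extension $h$ is slice-topologically singular at these common real points, which rules out any $I$-triple whose $U_2$ contains $a+\rho$ or $a-\rho$. For an $I$-triple whose $U_2$ contains only non-real boundary points $z_0=x_0+y_0I$ of $B_I(a,\rho)$, the plan is to use that the sphere $x_0+|y_0|\mathbb{S}$ through $z_0$ meets $\Sigma(p,r)$ at exactly one interior point, namely $x_0+|y_0|I_p\in B_{I_p}(p,r)$, and to construct, via a limiting form of the representation formula (Theorem~\ref{th-rf}) along paths that approach the boundary while keeping all three slice-images in $\Sigma(p,r)$, a slice regular function on $\Sigma(p,r)$ whose $I$-restriction has a boundary singularity at $z_0$. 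The identity principle in $\mathbb{C}_I$ then yields the contradiction.

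The main obstacle is this last construction: the representation formula applies only on paths $\gamma$ whose three slice-images $\gamma^I,\gamma^{I_p},\gamma^{-I_p}$ all lie in $\Sigma(p,r)$, and this condition degenerates as one approaches non-real points of $\partial\Sigma(p,r)_I$, making the propagation of prescribed boundary singularities from $\mathbb{C}_{I_p}$ to $\mathbb{C}_I$ subtle and requiring a delicate path-deformation and limiting argument.
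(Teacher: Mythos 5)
Your overall framework agrees with the paper's: reduce to ruling out $I$-triples via Proposition \ref{pr-so}, and use a Hadamard lacunary function whose restriction to the slice $\mathbb{C}_{I_p}$ containing $p$ has $\partial B_{I_p}(p,r)$ as natural boundary (the paper uses $f(q)=\sum_n((q-p)/r)^{*2^n}$). Your treatment of $p\in\mathbb{R}$ via Proposition \ref{pr-aa}, of the slices $I=\pm I_p$, and of the slice structure $\Sigma(p,r)_I=B_I(a,\rho)$ matches the paper and is fine; your sub-case of a real boundary point $a\pm\rho$ is also unproblematic.

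The gap is the case you yourself label ``the main obstacle'': an $I$-triple with $I\neq\pm I_p$ whose $U_2$ meets $\partial_I\Sigma_I(p,r)$ only at non-real points $z_0=x_0+y_0I$. Your plan is to \emph{construct} a slice regular function on $\Sigma(p,r)$ whose $I$-restriction is singular at a prescribed $z_0$, by ``a limiting form of the representation formula'' along paths approaching the boundary. This is not carried out, and it is precisely where the difficulty sits: as you note, the sphere $x_0+|y_0|\mathbb{S}$ meets $\Sigma(p,r)$ only at $x_0+|y_0|I_p$, so Theorem \ref{th-rf} is not available at $z_0$ and there is no evident mechanism for propagating a prescribed singularity from $\mathbb{C}_{I_p}$ to $\mathbb{C}_I$. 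The paper closes this case by arguing in the \emph{opposite} direction: it assumes the $I$-triple exists, takes $z=x+yL$ (with $L\in\{\pm I\}$, $y>0$) on $U_2\cap\partial_I U_1'\subset\partial_I\Sigma_I(p,r)$, observes that $x+yI_p$ lies in the interior of $B_{I_p}(p,r)$, and applies the Extension Theorem \ref{pr-ef} with $\mathbb{I}=(L,I_p)$ and $\mathbb{U}=(\Sigma_L(p,r)\cup B_L(z,r_1),\,\Sigma_{I_p}(p,r))$, where the extra disc $B_L(z,r_1)$ is supplied by the hypothetical holomorphic extension coming from the $I$-triple. The output is a holomorphic continuation of the lacunary restriction $f_{I_p}$ across a point of $\partial B_{I_p}(p,r)$ --- i.e.\ the contradiction is pulled back to the distinguished slice where the natural-boundary property is actually known, and no new function with a prescribed singularity on $\mathbb{C}_I$ is ever needed. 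Without this reversal (or a genuine completion of your limiting construction), your argument is incomplete in its decisive case.
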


\begin{proof}
	Let $p\in\mathbb{H}$ and $r\in\mathbb{R}_+$, let $\Sigma(p,r)$ be the $\sigma$-ball with center at $p$ and with radius $r$. Consider the function $f:\Sigma(p,r)\rightarrow\mathbb{H}$ defined by
	\begin{equation*}
		f(q)=\sum_{n\in\mathbb{N}}\left(\frac{q-p}{r}\right)^{*2^n}.
	\end{equation*}

{ We know that $p\in\mathbb{C}_K$, for some $K\in\mathbb{S}$, and from classical complex analysis arguments, we have that $f_K:\, \Sigma_K(p,r)\to\mathbb{C}_K$
does not extend to a holomorphic function near any point of the boundary of $\Sigma_K(p,r):=\Sigma(p,r)\cap\mathbb{C}_K$.	If $\Sigma(p,r)$ is not a domain of slice regularity, then by Proposition \ref{pr-so}, there is an $I$-triple $(\Sigma(p,r),U_1,U_2)$ for some $I\in\mathbb{S}$. Let $U_1'$ be a connected component of $\Sigma(p,r)\cap U_2$ in $\mathbb{C}_I$ with $U_1'\cap U_1\neq\varnothing$. Then $(\Sigma(p,r),U_1',U_2)$ is also an $I$-triple and $U_2\cap\partial_I U_1'\subset\partial_I(\Sigma_I(p,r))$. If $p\in\mathbb{C}_I$, the holomorphic function $f_I:\Sigma_I(p,r)\rightarrow\mathbb{C}_I$ can be extended to a holomorphic function near a point of the boundary of $\Sigma_I(p,r)$, which is a contradiction.
	
	Otherwise, if $p\notin\mathbb{C}_I$, then $p\in\mathbb{C}_J^+$ for some $J\in\mathbb{S}\backslash\{\pm I\}$. Take $z=x+yL\in U_2\cap\partial_I U_1'$ with $y>0$ and $L\in\{\pm I\}$. Then $x+yJ\in\Sigma_J(p,r)$ and $x-yJ\in\partial_J(\Sigma_J(p,r))$. There is $r_1\in\mathbb{R}_+$ such that
	\begin{equation*}
		B_L(x+yL,r_1)\subset U_2\qquad\mbox{and}\qquad B_J(x+yJ,r_1)\subset\Sigma(p,r)\cap\mathbb{C}_J^+.
	\end{equation*}
	Using Theorem \ref{pr-ef} where we set
	\begin{equation*}
		\mathbb{I}=(L,J)\qquad\mbox{and}\qquad \mathbb{U}:=(\Sigma_L(p,r)\cup B_L(x+yL,r_1),\Sigma_J(p,r)),
	\end{equation*}
	 we conclude that the holomorphic function $f_J:\Sigma_J(p,r)\rightarrow\mathbb{C}_J$ can be extended to a holomorphic function on $B_J(x-yJ,r_1)\cup\Sigma_J(p,r)$ near the point $x-yJ\in\partial_J(\Sigma_J(p,r))$, which is a contradiction. Therefore, $\Sigma(p,r)$ is a domain of slice regularity.
	}
\end{proof}

\begin{prop}\label{pr-dsr}
	Let $I\in\mathbb{S}$ and $\Omega$ is a domain of slice regularity. If $\gamma\in\mathscr{P}(\mathbb{C})$ and $(J,K)\in\mathbb{S}^2_*$ with $\gamma^J,\gamma^K\subset\Omega$, then $\gamma^I\subset\Omega$ for any $I\in\mathbb{S}$.
\end{prop}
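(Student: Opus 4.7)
The plan is to prove the contrapositive: if some $\gamma^I\not\subset\Omega$, then $\Omega$ is not a domain of slice regularity, by exhibiting a slice-triple $(\Omega,\Omega_1,\Omega_2)$ violating Definition \ref{df-dsr}. Let $t_0:=\min\{t\in[0,1]:\gamma^I(t)\notin\Omega\}$, which exists because $\Omega_I$ is open in $\mathbb{C}_I$ (so the ``bad set'' is closed) and because $\gamma^I(0)=\gamma(0)=\gamma^J(0)\in\Omega$. Writing $\gamma(t_0)=x_0+y_0 i$, necessarily $y_0\neq 0$ (otherwise $\gamma^I(t_0)=x_0=\gamma^J(t_0)\in\Omega$). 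After possibly replacing $(\gamma,I,J,K)$ by $(\bar\gamma,-I,-J,-K)$---a swap that preserves all hypotheses via the pointwise identity $\bar\gamma^{-L}=\gamma^L$ for $L\in\mathbb{S}$---one may assume $y_0>0$. Setting $s_0:=\sup\{t\in[0,t_0]:y(t)=0\}$, continuity forces $y(s_0)=0$ and $y>0$ on $(s_0,t_0]$. Reparametrizing $\gamma|_{[s_0,t_0]}$ to $[0,1]$ reduces to the clean setup $\gamma\in\mathscr{P}(\mathbb{C}^+)$, $\gamma^J,\gamma^K\subset\Omega$, $\gamma^I([0,1))\subset\Omega$, and $\gamma^I(1)\notin\Omega$.

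Next I would apply the Extension Theorem \ref{pr-ef} with $\mathbb{I}=(J,K)$ and $\mathbb{U}=(\Omega_J,\Omega_K)$: since any slice regular $f$ on $\Omega$ restricts to holomorphic functions on $\Omega_J$ and $\Omega_K$, there is a slice regular extension $\widetilde f$ of $f|_{\mathbb{U}^+_{s}}$ to the slice-open set $\mathbb{U}^{+\Delta}_s$. The key observation is that the entire path $\gamma^I$ sits inside $\mathbb{U}^{+\Delta}_s$: for every $t\in[0,1]$ one has $y(t)\geq 0$, $\gamma^J(t)=x(t)+y(t)J\in\Omega_J$, and $\gamma^K(t)=x(t)+y(t)K\in\Omega_K$, so the whole sphere $x(t)+y(t)\mathbb{S}$ lies in $\mathbb{U}^\Delta_s$, and in particular so does $\gamma^I(t)$.

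Finally I would pick $r>0$ so small that $\Omega_1:=B_J(\gamma^J(1),r)$ lies inside $\Omega_J\cap\mathbb{C}_J^+$; this set is slice-open in $\mathbb{H}$ and contained in $\Omega\cap\mathbb{U}^+_{s}$. Take $\Omega_2$ to be the slice-connected component of $\mathbb{U}^{+\Delta}_s$ containing $\Omega_1$. Concatenating the reversed slice-path $\gamma^J\subset\Omega_J$ (from $\gamma^J(1)$ back to $\gamma(0)\in\mathbb{R}$) with the slice-path $\gamma^I\subset\mathbb{U}^{+\Delta}_s$ (from $\gamma(0)$ to $\gamma^I(1)$) yields a slice-path in $\mathbb{U}^{+\Delta}_s$ joining $\Omega_1$ to $\gamma^I(1)$, whence $\gamma^I(1)\in\Omega_2\setminus\Omega$ and $\Omega_2\not\subset\Omega$. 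Since $\widetilde f|_{\Omega_1}=f|_{\Omega_1}$ by construction of $\widetilde f$, the triple $(\Omega,\Omega_1,\Omega_2)$ satisfies (i)--(iii) of Definition \ref{df-dsr}, contradicting the hypothesis.

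I expect the main obstacle to be the reflection reduction in the first paragraph: Theorem \ref{pr-ef} only furnishes extensions onto the ``upper'' set $\mathbb{U}^{+\Delta}_s$, so a path $\gamma^I$ whose relevant portion dips into the lower half plane cannot be engulfed directly, and the identity $\bar\gamma^{-L}=\gamma^L$ is precisely what renders the argument uniform in the sign of $y_0$.
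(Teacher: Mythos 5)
Your proof is correct. It follows the same overall strategy as the paper --- argue by contradiction, locate the first exit time $t$ of $\gamma^I$ from $\Omega$, and manufacture a slice-triple by extending an arbitrary slice regular $f$ past $\gamma^I(t)$ using its values on the $J$- and $K$-slices --- but the extension step is implemented quite differently. The paper works locally: it takes small balls $B_J(\gamma^J(t),r),B_K(\gamma^K(t),r)\subset\Omega$, defines a holomorphic $g$ on $B_I(\gamma^I(t),r)$ by the pointwise representation formula \eqref{eq-ij}, checks that $g$ agrees with $f$ near $\gamma^I(t')$ for $t'<t$ close to $t$, extends $g$ to the $\sigma$-ball $\Sigma(\gamma^I(t),r)$ by Corollary \ref{ex-sb}, and takes for the triple a slice-connected component of $\Sigma(\gamma^I(t),r)\cap\Omega$ together with the $\sigma$-ball itself. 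You instead apply Theorem \ref{pr-ef} globally to $\mathbb U=(\Omega_J,\Omega_K)$ and observe that every sphere $x(t)+y(t)\mathbb S$, hence all of $\gamma^I$ including its endpoint outside $\Omega$, lies in $\mathbb U^{+\Delta}_s$. This buys you the agreement $\widetilde f=f$ on $\Omega_1\subset\mathbb U^+_s$ for free (no appeal to the Identity Principle at that step) and replaces the $\sigma$-ball by a path-concatenation argument showing $\gamma^I(1)$ and $\Omega_1$ lie in the same slice-connected component; the price is the preliminary reduction to $\mathscr{P}(\mathbb{C}^+)$ via conjugation and truncation at the last real-axis crossing, which the paper's local construction sidesteps because \eqref{eq-ij} and Corollary \ref{ex-sb} are insensitive to the sign of $y$. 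You identify and execute that reduction correctly (the identity $\bar\gamma^{-L}=\gamma^L$ and the choice of $s_0$ both check out), so the argument is complete.
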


\begin{proof}
	We shall prove this  by contradiction. Suppose that $\gamma^I\not\subset \Omega$ for some $I\in\mathbb{S}$. Since $\gamma^I$ is a slice-path, $(\gamma^I)^{-1}(\Omega)$ is open in $[0,1]$. Set $$t:=\min\{s\in[0,1]:\gamma^I(s)\notin\Omega\}.$$
	
By assumption, we have  $$z_J:=\gamma^J(t)\in \Omega, \qquad z_K:=\gamma^K(t)\in\Omega$$  so that  $$B_J(z_J,r)\subset\Omega, \qquad B_K(z_K,r)\subset\Omega$$ for some $r\in\mathbb{R}_+$.

Since $\gamma^I$ is continuous in $\mathbb{C}_I$, there is $t'\in[0,t)$ such that  $\gamma^I(t')\in B_I(z_I,r)$, where $z_I:=\gamma^I(t)$.
	
	For any slice regular function $f$ on $\Omega$, define a function $g:B_I(z_I,r)\rightarrow\mathbb{H}$ by
	\begin{equation}\label{eq-ij}
	g(x+yI)=(I-K)(J-K)^{-1}f(x+yJ)+(I-J)(K-J)^{-1}f(x+yK)
	\end{equation}
	for any $x,y\in\mathbb{R}$ with $x+yJ\in B_J(z_J,r)$.

By direct calculation (see the proof of \cite[Theorem 3.2]{Colombo2009001}), $g$ is holomorphic. Note that $\gamma^I(t')\in\Omega_I\cap B_I(z_I,r)$. Then by taking $J=I$ and $K=-I$ in \eqref{eq-ij}, we have
\begin{equation*}
	g(x+yI)=f(x+yI),\qquad\forall\ x+yI\in\Omega_I\cap B_I(z_I,r).
\end{equation*}
Hence $g=f$ near $\gamma^I(t')$.

By Corollary  \ref{ex-sb}, there is a  unique slice regular extension $\widetilde{g}$ on $\Omega_1:=\Sigma(z_I,r)$. Since $\Sigma(z_I,r)$ and $\Omega$ are slice-open, it follows that $\Sigma(z_I,r)\cap\Omega$ is slice-open. Hence the slice-connected component $\Omega_2$ of $\Sigma(z_I,r)\cap\Omega$ containing $\gamma^I(t')$ is an st-domain. By Identity Principle \ref{th-ip}, $f=g$ on $\Omega_2$.
	
	It is easy to check that $\Omega$, $\Omega_1$ and $\Omega_2$ satisfy (i-iii) in Definition \ref{df-dsr}. Hence $\Omega$ is not a domain of slice regularity, which is a contradiction.
\end{proof}

\section{Final remarks}

	Let $\widetilde{\Omega_\varphi}$ and $\widetilde{\Psi_\varphi}$ be defined as in Proposition \ref{pr-sre}. Proposition \ref{pr-fsd} implies that $\widetilde{\Psi_\varphi}$ cannot be slice regularly extended to a larger st-domain. However, according to Proposition \ref{pr-dsr}, $\widetilde{\Omega_\varphi}$ is not a domain of slice regularity when $\varphi$ is not constant. This suggests  to establish an analogue of the theory of Riemann domains for quaternions and characterize   the domain of existence of $\widetilde{\Psi_\varphi}$ which is an analogue of a Riemann domain. Since the slice topology is not Euclidean near $\mathbb{R}$,  we cannot consider quaternionic manifolds along the lines used in this paper. Instead, orbifolds over $(\mathbb{H},\tau_s)$ could be considered.

{\bf Acknowledgments}. The authors are grateful to the referees for carefully reading of the manuscript and for the useful comments.

\bibliographystyle{amsplain}

\end{document}